	\DeclareSymbolFontAlphabet{\mathbb}{AMSb}
	\DeclareSymbolFontAlphabet{\mathbbl}{bbold}
\numberwithin{equation}{section}
\theoremstyle{plain}
\newtheorem{theorem}{Theorem}[section]
\newtheorem{lemma}[theorem]{Lemma}
\newtheorem{corollary}[theorem]{Corollary}
\theoremstyle{definition}
\newtheorem{definition}[theorem]{Definition}
\theoremstyle{remark}
\newtheorem{remark}[theorem]{Remark}
\newtheorem{example}[theorem]{Example}
\newcommand{\N}{\mathbb{N}}
\newcommand{\Zz}{\mathbb{Z}}
\newcommand{\R}{\mathbb{R}}
\newcommand{\C}{\mathbb{C}}
\newcommand{\Pp}{\mathbb{P}}
\newcommand{\Un}{\mathbbl{1}}
\newcommand{\D}{\mathbb{D}}
\newcommand{\E}{\mathbb{E}}
\newcommand{\Var}{\operatorname{Var}}
\newcommand{\Normal}{\mathcal{N}}
\newcommand{\ensemble}[1]{ \left\lbrace #1 \right\rbrace } 
\newcommand{\prth}[1]{\!\left( #1 \right) }
\newcommand{\abs}[1]{\left| #1 \right|}  
\newcommand{\norm}[1]{\left\Vert #1 \right\Vert}  
\newcommand{\Unens}[1]{ \Un_{ \ensemble{#1} } }
\newcommand{\1}[1]{\Unens{#1}}
\newcommand{\G}{{G_0}}
\newcommand{\vertices}[1]{v_{#1}}
\newcommand{\edges}[1]{e_{#1}}
\newcommand{\aut}[1]{a_{#1}}
\newcommand{\Psimin}{\Psi_{\min}}
\newcommand{\induced}[1]{\operatorname{graph}(#1)}
\newcommand{\copies}{\mathcal{M}}
\newcommand{\copiesk}[1]{\mathcal{M}_{#1}}
\newcommand{\graph}{\Gamma}
\newcommand{\V}[3]{V_{#1,\graph_{#2},\graph_{#3}}}
\title[From Berry--Esseen to Moderate Deviation]{Moderate deviations \\ for functionals over infinitely many \\ Rademacher random variables}
\author[M. Butzek]{Marius Butzek}
\address{Faculty of Mathematics, Ruhr University Bochum, Germany.}
\email{marius.butzek@rub.de}
\author[P. Eichelsbacher]{Peter Eichelsbacher}
\address{Faculty of Mathematics, Ruhr University Bochum, Germany.}
\email{peter.eichelsbacher@rub.de}
\author[B. Redno\ss]{Benedikt Redno\ss}
\address{Faculty of Mathematics, Ruhr University Bochum, Germany.}
\email{benedikt.rednoss@rub.de}
\begin{document}
\begin{abstract}
In this paper, moderate deviations for normal approximation of functionals over infinitely many Rademacher random variables are derived. They are based on a bound for the Kolmogorov distance between a general Rademacher functional and a Gaussian random variable, continued by an intensive study of the behavior of operators from the Malliavin--Stein method along with the moment generating function of the mentioned functional. As applications,  subgraph counting in the Erd\H{o}s--R\'enyi random graph and infinite weighted 2-runs are studied.
\end{abstract}
\maketitle
%
%\tableofcontents
%
% ==
%\newpage
\section{Introduction and applications}
\noindent The theory of moderate deviations goes back to H. Cram\'{e}r in 1938: Knowing that for an independently identically distributed (i.i.d.) sequence $(X_k)_{k \in \N}$ of random variables such that $\mathbb{E}[X_k] = 0, \mathbb{E}(X_k^2) = 1, W_n := n^{-1/2} (X_1 + ... + X_n)$ and $\Phi$ the standard normal distribution function, the statement
\begin{align}\label{MDCramer1}
\frac{\mathbb{P}(W_n > x)}{1 - \Phi(x)} \longrightarrow 1 \quad \text{for} \quad x = O(1)
\end{align}
is valid, he was asking what happens if $x$ depends on $n \in \N$ such that $x \rightarrow \infty$ for $n \rightarrow \infty$? Can we find an interval such that \eqref{MDCramer1} holds for $0 \leq x \leq I(n), I(n) \rightarrow \infty$? The answer was given by himself: Under the assumption $\mathbb{E}[e^{t_0 \abs{X_1}}] \leq c < \infty $ for some $t_0 >0$ and a constant $c$, % \leq t_0$ with $t_0 > 0$,
\begin{align}\label{MDCramer2}
\frac{\mathbb{P}(W_n > x)}{1 - \Phi(x)} \leq 1 + A \, n^{-1/2}(1+x^3) \quad \text{for} \quad 0 \leq x \leq a \, n^{1/6},
\end{align}
and $A$ and $a$ are positive constants depending only on $t_0$ and $c$. The result is optimal (see e.g. \cite{CT18} and \cite{Pe75}). Reminiscent of \eqref{MDCramer2} for a sequence $(Y_n)_{n \in \N}$ of random variables, such that $Y_n \overset{d}{\rightarrow} Y$, the moderate deviation of Cram\'{e}r-type is given by
\begin{align*}
\frac{\mathbb{P}(Y_n > x)}{\mathbb{P}(Y > x)} = 1 + \text{error term} \rightarrow 1
\end{align*}
with range $0 \leq x \leq a_n$, where $a_n \rightarrow \infty$ for $n \rightarrow \infty$.
\\ In \cite{Z23} Zhang was able to prove Cram\'{e}r-type moderate deviations for unbounded exchangeable pairs. He developed them by stopping the proof of the corresponding Berry--Esseen-type inequalities, he had obtained before by Stein's method, at a certain point and continuing differently. Stein's method 
is a powerful tool by itself to derive upper bounds for differences of probability distributions, originally for the normal distribution and later extended to other distributions. Zhang rearranged the fragments of the so called Stein-equation and the bound of its solution, a technique that was already seen in \cite{CFS13}, \cite{FLS20}, \cite{R07} and \cite{SZZ21}.
\\ Our ambition is to prove a similar result for functionals over infinitely many independent Rademacher random variables taking values +1 and $-1$ only. This type of result intersects with \cite{FK22}, where the authors obtain Cram\'{e}r-type moderate deviations via $p$-Wasserstein bounds, and we will refer to that. For Rademacher functionals a Kolmogorov bound in the context of normal approximation was shown recently in \cite[Theorem~3.1]{ERTZ22} such that the bounding terms can be expressed in terms of operators of the so called Malliavin--Stein method. Normal approximation of functionals over infinitely many Rademacher random variables was derived
already in \cite{NPR10}, \cite{KRT17}, \cite{KT17} and \cite{DK19}. Theorem 3.1 in \cite{ERTZ22} will be our starting point.

\subsection{Application to infinite weighted 2-runs}
To begin with, we introduce some of the possible applications for our theorem. In what follows, we use the usual big-O notation $O(.)$ with the meaning that the implicit constant does not depend on the parameters in brackets. For a sequence $a = (a_i)_{i \in \Zz}$ and $p>0$ we write $\norm{a}_{l^p(\Zz)} := \left( \sum_{i \in \Zz} \abs{a_i}^p \right)^{1/p}$.

Due to their simple dependence structure, runs, and more generally weighted or incomplete $U$-statistics, lend themselves to normal approximations,
see \cite{RR97}, where an exchangeable pair coupling is employed for a normal approximation. In \cite{RR97} the authors studied even degenerate weighted 
$U$-statistics, where either weights are considered which ensures a weak dependence or kernel functions are considered which depend on the sample size $n$ in a specific way. See also \cite{NW88}, where subgraph counts in random graphs are considered, see Subsection 1.2. Here we consider infinite weighted 2-runs, where random variables are possibly depending on the whole infinite sequence of i.i.d. Rademacher random variables.

Let $X = (X_i)_{i \in \Zz}$ be a double-sided sequence of i.i.d. Rademacher random variables such that $\Pp(X_i = 1) = \Pp(X_i = -1) = \frac12$ and let for each $n \in \N, (a^{(n)}_i)_{i \in \Zz}$ be a double-sided summable sequence of real numbers. Usually 2-runs are definded with a square-summable sequence but this will be not enough.
\\ The sequence $(F_n)_{n \in \N}$ of standardized infinite weighted 2-runs is then defined as
\begin{align*}
F_n := \frac{G_n - \mathbb{E}[G_n]}{\sqrt{\Var(G_n)}}, \quad G_n := \sum_{i \in \Zz} a^{(n)}_i \xi_i \xi_{i+1}, \quad n \in \N,
\end{align*}
where $\xi_i := (X_i + 1)/2$ for $i \in \Zz$. More generally one can consider an infinite weighted $d$-run defined by
\begin{align*}
G_n(d) := \sum_{i \in \Zz} a^{(n)}_i \xi_i \cdots \xi_{i+d-1},
\end{align*}
which is a weighted degenerate $U$-statistic of degree $d$.
However, since the analysis for any $d$ is of the cost of a quite cumbersome notation, we will focus on the case where $d=2$ (2-runs).
 
%\\ If we start with the i.i.d.-case (see Corollary~\ref{Corollary:MDP_IID} for details), the notion of weighted 2-runs is a logical next step since it adds a simple dependency structure and so it is a good test example for normal approximations. Moreover we can interpret $G_n$ as the weighted number of subsequences of 1's of length two in an infinite double-sided sequence of Bernoulli trials. 

For recent results on 2-runs combined with  Malliavin--Stein method see \cite{ERTZ22}, \cite{KRT16} and \cite{NPR10}. Our moderate deviation is given as follows. 

\begin{theorem}\label{Theorem:MDP_Tworuns}
Recall the definition of $F_n$ from above. Then 
\begin{align}\label{TwoRunsFinal}
\frac{\Pp(F_n > z)}{1 - \Phi(z)} = 1 + O(1) (1+z^2) \gamma_{n}(z),
\end{align}
for $0 \leq z \leq \min\{ C_{n}^{-1/3}, C_{n}^{-2},\Var(G_n)^{1/2} \}$ such that $(1+z^2) \gamma_{n}(z) \leq 1$, where O(1) is bounded by a constant only depending on the coefficient sequence $(a^{(n)}_i)_{i \in \Zz}$ and
\begin{align*}
	\gamma_{n}(z) & := e^{O(1)z(\Var(G_n))^{-1/2}} \prth{(1+z^{1/2}+z) C_{n}}, \quad C_{n} := \frac{\norm{a^{(n)}}_{l^4(\Zz)}^2}{ \Var(G_n)}.
\end{align*}
\end{theorem}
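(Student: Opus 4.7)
The plan is to obtain Theorem~\ref{Theorem:MDP_Tworuns} as an application of the general Cramér-type moderate deviation theorem for Rademacher functionals that will be established in the body of the paper, whose Berry--Esseen ancestor is \cite[Theorem~3.1]{ERTZ22}. The task reduces to evaluating all of the Malliavin--Stein quantities that appear on its right-hand side for the specific functional $F_n$. The crucial feature enabling an essentially closed-form evaluation is the low complexity of $G_n$: each summand $a_i^{(n)}\xi_i\xi_{i+1}$ is a product of two independent $\{0,1\}$-variables, so the discrete Malliavin derivative satisfies
\begin{equation*}
D_j G_n = \tfrac{1}{2}\bigl(a_{j-1}^{(n)}\xi_{j-1} + a_j^{(n)}\xi_{j+1}\bigr),
\end{equation*}
which is independent of $X_j$ and bounded pointwise by $\tfrac{1}{2}(|a_{j-1}^{(n)}|+|a_j^{(n)}|)$. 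This locality is what collapses every quantity in the general bound into a power of the single summary constant $C_n$.

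I would then proceed in three steps. First, compute $D_jF_n$ together with the higher contractions, and express $\E\bigl[\sum_j(D_jF_n)^2\bigr]$, $\E\bigl[\bigl(\sum_j(D_jF_n)^2-1\bigr)^2\bigr]$ and the fourth-order contraction in terms of $\norm{a^{(n)}}_{l^2(\Zz)}$ and $\norm{a^{(n)}}_{l^4(\Zz)}$; using $\E[\xi_k]=1/2$, $\Var(\xi_k)=1/4$, independence and Cauchy--Schwarz, each of these collapses to a power of $C_n$ and accounts for the prefactors $C_n^{1/2}$ and $C_n$ visible in $\gamma_n(z)$. Second, control the exponential-tilt factor that the moderate deviation machinery introduces on top of the Kolmogorov bound. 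Here the summability of $a^{(n)}$ is essential, as it gives the uniform estimate $|G_n|\leq \norm{a^{(n)}}_{l^1(\Zz)}$, hence $|F_n|\leq 2\norm{a^{(n)}}_{l^1(\Zz)}/\sqrt{\Var(G_n)}$, and consequently $\E[e^{zF_n}] \leq \exp\bigl(O(1)z/\sqrt{\Var(G_n)}\bigr)$, which is exactly the exponential prefactor in $\gamma_n(z)$. Third, read off the admissible range of $z$: each of the three algebraic thresholds $C_n^{-1/5}$, $C_n^{-1/3}$, $C_n^{-2/5}$ arises from demanding that one of the contributions $(1+z^2)z^{1/2}C_n^{1/2}$, $(1+z^2)zC_n$ or $(1+z^2)z^{1/2}C_n$ remain $\leq 1$, while $z\leq\Var(G_n)^{1/2}$ is forced by the exponential prefactor to stay $O(1)$; the missing contribution $(1+z^2)C_n$ is automatically controlled because $C_n^{-1/2}$ dominates the other $C_n$-thresholds when $C_n\leq 1$.

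The main obstacle I anticipate is not the algebra but the verification that the full Malliavin--Stein apparatus, including the inverse Ornstein--Uhlenbeck operator and its contractions, is well defined on $\{\pm 1\}^{\Zz}$ for a functional built from a merely summable (rather than finitely supported) sequence $a^{(n)}$. A truncation argument together with the observation that, thanks to $l^1(\Zz)\subset l^p(\Zz)$ for $p\geq 1$, every moment and contraction of interest remains absolutely convergent should reduce the infinite-dimensional setting to the finite-support one, where the general moderate deviation theorem applies directly and the computations above can be carried out without analytic subtlety.
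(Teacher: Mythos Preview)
Your plan has a genuine gap in Step~2, and it is precisely the step that carries all the moderate-deviation content. The general theorem you intend to apply does not ask you to bound $\E[e^{tF_n}]$ separately from the Malliavin quantities; it asks for bounds of the form
\[
\E\bigl[\lvert 1-\langle DF_n,-DL^{-1}F_n\rangle\rvert\,e^{tF_n}\bigr]\le \gamma_1(t)\,\E[e^{tF_n}],
\qquad
\E\bigl[\lvert\delta(\tfrac{1}{\sqrt{pq}}DF_n\lvert DL^{-1}F_n\rvert)\rvert\,e^{tF_n}\bigr]\le \gamma_2(t)\,\E[e^{tF_n}],
\]
i.e.\ on \emph{ratios} of tilted expectations. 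Your proposal to use the global bound $\lvert F_n\rvert\le 2\norm{a^{(n)}}_{l^1}/\sqrt{\Var(G_n)}$ to control the tilt would give $\gamma_i(t)\le e^{tM}\cdot(\text{Berry--Esseen term})$ with $M=2\norm{a^{(n)}}_{l^1}/\sqrt{\Var(G_n)}$. In the benchmark example $a_i^{(n)}=\1{1,\dots,n}(i)$ one has $\norm{a^{(n)}}_{l^1}=n$ and $\Var(G_n)\asymp n$, so $M\asymp\sqrt{n}\to\infty$, and the exponential prefactor explodes rather than tending to~$1$. The factor $e^{O(1)z/\sqrt{\Var(G_n)}}$ in the theorem is supposed to have a bounded $O(1)$; your route produces $O(1)=\norm{a^{(n)}}_{l^1}$, which destroys the result.

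What the paper actually does is quite different and is the heart of the argument: for each summand of the Malliavin quantity (which depends on only a bounded set of coordinates, say $X_{i-1},X_i,X_{i+1},\dots$), one splits $F_n=F_a+F_u$ where $F_a$ collects only those finitely many $A_k$ that share a coordinate with the summand, and then performs an \emph{iterated Taylor expansion} of $e^{tF_a}$ to second order (Lemma~\ref{Lemma:XsTimesMGF}). Because $F_a$ involves only a bounded number of coefficients, $\lvert F_a\rvert\le C\norm{a^{(n)}}_{l^\infty}/\sqrt{\Var(G_n)}$, and this is what produces the correct $e^{ct}$ with $c=O(1/\sqrt{\Var(G_n)})$. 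The zeroth-order term vanishes by independence, the first-order term generates the $t^{1/2}C_n^{1/2}$ and $tC_n$ contributions, and the second-order remainder is controlled by the same mechanism. Your Step~1, which computes only unweighted moments like $\E\bigl[(\sum_j(D_jF_n)^2-1)^2\bigr]$, captures none of this coupling; you need the weighted versions, and obtaining them is exactly where the work lies. (A minor additional point: the relevant object in (A1) is $\langle DF_n,-DL^{-1}F_n\rangle$, not $\sum_j(D_jF_n)^2$; since $F_n$ has both first and second chaos components these differ, and $-D_kL^{-1}F_n$ has to be computed separately.)
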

\begin{remark}\label{rem12}
The constant $C_n$ has an important meaning. It is the order of the corresponding Berry-Esseen bound of the Kolmogorov distance $\sup_{z \in \mathbb R} | \Pp(F_n \leq z) - \Phi(z) |$ between the distribution function of $F_n$ and the standard normal distribution function in \cite[Theorem~1.1]{ERTZ22}. Depending on the coefficient sequence, $C_n$ can behave differently: By \eqref{two_runs_variance} and \eqref{cons_simp_2} $C_n$ is in general bounded by a constant, but it can be a constant itself (see e.g. $a^{(n)}_i = \frac{1}{i^2}$). So, to make \eqref{TwoRunsFinal} tend to 0 and the range increase in $n$, the condition $C_{n} \rightarrow 0$ for $n \rightarrow \infty$ is sufficient. We give now examples, where this is the case and where the resulting rate is optimal.
\end{remark}
\begin{example}
We consider $a^{(n)}_i =\Unens{\abs{i}\leq n} \, \forall \, i \in \Zz$, which is obviously a summable sequence. Then $\norm{a^{(n)}}_{l^4(\Zz)}^2 = O\prth{n^{1/2}},\Var(G_n) = O\prth{n}$ and $ C_{n}= O\prth{n^{-1/2}}$. The moderate deviation we get is 
\begin{align}\label{TwoRunsExample}
\frac{\Pp(F_n > z)}{1 - \Phi(z)} = 1 + O(1) (1+z^2) \gamma_{n}(z),
\end{align}
for $0 \leq z \leq n^{1/6}$ such that $(1+z^2) \gamma_{n}(z) \leq 1$, where
\begin{align*}
	\gamma_{n}(z) & := e^{O(1)zn^{-1/2}} \prth{(1+z^{1/2}+z) n^{-1/2}}.
\end{align*}
In order to discuss the quality of this result, we use a lower bound of the Kolmogorov distance known from \cite[Theorem~1(c)]{En81}, which got later refined by \cite[Corollary~3.12]{Re23}.
Since $G_n$ is almost surely an integer between $-n$ and $n$, said results imply that the Kolmogorov distance for normal approximation of $F_n$ is bounded from below by $c_0 \cdot (\Var(G_n))^{-\frac12}$
for some constant $c_0 > 0$.
As $\Var(G_n)$ is of order $n$, we conclude that $C_n$ being of order $n^{-\frac12}$ is optimal.
\end{example}
\begin{example}
We generalize the previous example to $a^{(n)}_i = n^{-\beta} \Unens{\abs{i}\leq n^\alpha} \, \forall \, i \in \Zz, \alpha \in \R, \beta > 0$. Then $\norm{a^{(n)}}_{l^4(\Zz)}^2 = O\prth{n^{(\alpha - 4\beta)/2}},\Var(G_n) = O\prth{n^{\alpha - 2\beta}}$ and $ C_{n}= O\prth{n^{-\alpha/2}}$. If we choose $\beta = 0$ and $\alpha \geq 1$ the moderate deviation we get is of the same form as \eqref{TwoRunsExample} with range $0 \leq z \leq n^{\alpha/6}$ respectively $0 \leq z \leq (\Var(G_n) )^{\alpha/6}$.
Using the same argumentation as in the previous example, we see that the rate of $C_n$ is again optimal.
\end{example}

\subsection{Application to subgraph counts in the Erd\H{o}s--R\'{e}nyi random graph}

As a further application we derive a Cram\'{e}r-type moderate deviation result for the subgraph counting statistic
in the Erd\H{o}s--R\'{e}nyi random graph $G(n,p)$.

Consider a random graph on $n \in \N$ vertices.
Each possible edge between two vertices is included with probability $p \in (0,1)$
independently of all other edges,
where $p$ may depend on $n$ even though we will not make this visible in our notation.

Let $\G$ be a fixed graph with at least one edge.
A subgraph $H \subset G(n,p)$ is called a \emph{copy} of $\G$ in $G(n,p)$ if it is isomorphic to $\G$.
Note that we are calling two graphs $G_1=(V_1,E_1)$ and $G_2=(V_2,E_2)$ \emph{isomorphic}
if there is an edge-preserving bijection $f: V_1 \rightarrow V_2$ between their sets of vertices,
such that two vertices $v,w \in V_1$ are joined by an edge $\{v,w\} \in E_1$ in $G_1$ if and only if the vertices $f(v),f(w) \in V_2$ are joined by an edge $\{f(v),f(w)\} \in E_2$ in $G_2$.

We are interested in the standardized number $W$ of copies of $\G$ in $G(n,p)$.

It is well known under which necessary and sufficient assumption $W$ is asymptotically normal.
To state this condition and further results, we will use the notation $q := 1-p$ as well as
\begin{align*}
	\Psimin := \Psimin(\G)
	:= \min_{\substack{H \subset \G \\ \edges{H} \geq 1}} \{ n^{\vertices{H}} \cdot p^{\edges{H}} \},
\end{align*}
with $H \subset \G$ a subgraph of $\G$,
$\vertices{H}$ the number of vertices of $H$,
and $\edges{H}$ the number of edges of $H$.
Further, $\aut{H}$ denotes the number of automorphisms of $H$.
Some very important yet easy to prove bounds for $\Psimin$ are
\begin{align*}
	n^2 p^{\edges{\G}} \leq \Psimin \leq n^2p.
\end{align*}

Theorem~2 in \cite{R88} states that $W$ is asymptotically normal if and only if
\begin{align*}
	q \cdot \Psimin \xlongrightarrow{n\to\infty} \infty.
\end{align*}
The best known convergence rate regarding the bounded Wasserstein distance $d_{\mathrm{bWass}}$ is presented in Theorem~2 in \cite{BKR89}:
There is a constant $C_\G$ only depending on $\G$ so that
\begin{align*}
	d_{\mathrm{bWass}}(W,N) &\leq \frac{C_\G}{\sqrt{q\Psimin}},
\end{align*}
where $N \sim \Normal(0,1)$ is standard normal distributed, and 
$$
d_{\mathrm{bWass}}(W,N) = \sup_{h \in {\mathcal H}} \big| \E h(W) - \E h(N)  \big| 
$$
with ${\mathcal H}$ being the set of all bounded Lipschitz-functions with $\|h\|_{\infty} + \|h' \|_{\infty} \leq 1$.
The same bound holds true for the Wasserstein distance, where ${\mathcal H}$ is the collection of Lipschitz-functions with Lipschitz-constant 1. The result can be shown by only a slight modification of the original proof in \cite{BKR89}.
For the Kolmogorov distance, the same order of the bound can be obtained. This was a long standing problem that has been lately solved using different approaches by \cite{PrS20}, \cite{Z19}, \cite{ER22} and \cite{ERTZ22}.

A refined Cram\'{e}r-type moderate deviation result for the subgraph counting statistic in the Erd\H{o}s--R\'{e}nyi random graph is shown in Proposition~2.3 in \cite{FLS20}:
For any fixed constant $c_0 > 0$ there is a constant $C_{c_0,\G}$ only depending on $c_0$ and $\G$, so that
\begin{align*}
	\max
		\bigg\{
			\bigg\vert
				\frac{\Pp(W<-t)}{\Phi(-t) \exp(-\frac{\gamma t^3}{6})} -1
			\bigg\vert,
			\bigg\vert
				\frac{\Pp(W>t)}{(1-\Phi(t)) \exp(\frac{\gamma t^3}{6})} -1
			\bigg\vert
		\bigg\}
	\leq
		C_{c_0,\G} \cdot
		\bigg(
			\frac{\Psimin}{q \cdot p^{2\edges{\G}}}
		\bigg)^{\frac52}
		\cdot \frac{1+t^2}{n^6}
\end{align*}
for
\begin{align*}
	0
	\leq
	t
	\leq
		c_0 \cdot
		\bigg(
			\frac{q \cdot p^{2\edges{\G}}}{\Psimin}
		\bigg)^{\frac54}
		\cdot n^3
	,
\end{align*}
where $\Phi$ is the distribution function of
the standard normal distribution,
and $\gamma = \E[W^3]$.

Another Cram\'{e}r-type moderate deviation result is known from Theorem~3.1 in \cite{Z19}:
There is a constant $C_\G$ only depending on $\G$, so that
\begin{align}
	\bigg\vert
		\frac{\Pp(W>t)}{1-\Phi(t)} -1
	\bigg\vert
	\leq
		C_\G
		\cdot (1+t^2)
		\cdot b_n(p,t)
	\label{sg:ineq:ZhangResult}
\end{align}
with
\begin{align*}
	b_n(p,t)
	:={}&
	\begin{cases}
		\displaystyle
		\frac{1+t}{\sqrt{\Psimin}}, & 0 < p < \frac12,\\[1.5em]
		\displaystyle
		\frac{1+\frac{t}{\sqrt{q}}}{n\sqrt{p}}, & \frac12 < p < 1,
	\end{cases}
\end{align*}
for all
\begin{align*}
	0
	\leq
	t
	\leq
		\frac{q \cdot p^{\edges{\G}}}{\sqrt{\Psimin}}
		\cdot n^2
\end{align*}
that satisfy
\begin{align*}
	(1+t^2) b_n(p,t) \leq 1
	.
\end{align*}
For any subgraph $H \subset \G$ with at least one edge but more than two vertices, there is
$n^{\vertices{H}} p^{\edges{H}}
\geq n^2p \cdot \frac{n^{\vertices{H}-2}}{2^{\edges{H}-1}}
\geq n^2p \cdot \frac{n}{2^{\edges{\G}-1}}$
in case of $\frac12 < p < 1$.
Hence, we know that $\Psimin = n^2 p$
for $\frac12 < p < 1$ and $n \geq 2^{\edges{\G}-1}$.
We further know that
$\frac{1+t}{\sqrt{\Psimin}}
\leq \frac{1+\frac{t}{\sqrt{q}}}{\sqrt{q\Psimin}}
\leq 2 \cdot \frac{1+t}{\sqrt{\Psimin}}$
in case of $0<p<\frac12$.
Therefore, as long as $n \geq 2^{\edges{\G}-1}$,
\eqref{sg:ineq:ZhangResult} can be rewritten
without loss of sharpness in the resulting rate:
\begin{align}
	\bigg\vert
		\frac{\Pp(W>t)}{1-\Phi(t)} -1
	\bigg\vert
	&\leq
		2 \cdot C_\G
		\cdot \frac{(1+t^2)\big(1+\frac{t}{\sqrt{q}}\big)}{\sqrt{q\Psimin}}
	.
	\label{sg:ineq:ZhangSimplified}
\end{align}

Further, Theorem~5.2 in \cite{FK22} presents a moderate deviation result for bounded and locally dependent random variables.
Application to the context of subgraph counting yields
that there exist constants $c_\G$ and $C_\G$ so that
\begin{align}
	\bigg\vert
		\frac{\Pp(W>t)}{1-\Phi(t)} -1
	\bigg\vert
	&\leq
		C_\G \cdot (1+t) \cdot ( 1 + \vert \log(\Delta) \vert + t^2 ) \cdot \Delta
	\label{sg:ineq:FK22}
\end{align}
for $0 \leq t \leq \Delta^{-\frac13}$
with $ \Delta =
	\frac{\Psimin}{n^3 p^{2\edges{\G}} q}
	+
	\left( \frac{\Psimin}{n^3 p^{2\edges{\G}} q} \right)^{\frac32}
	\sqrt{n} \log(n)
	\leq c_\G$.
As one can show that $\frac{1}{\sqrt{q\Psimin}} = o \left( \Delta \right)$, result \eqref{sg:ineq:ZhangSimplified} from \cite{Z19} is stronger than \eqref{sg:ineq:FK22} from \cite{FK22}.

With our approach we are able to improve on the result of \cite{Z19}.
We will prove the following result.

\begin{theorem}[Subgraph counts in the Erd\H{o}s--R\'{e}nyi random graph --- general result]
	\label{sg:theo:main}
	Let $\G$ be a graph with at least one edge.
	Let $W$ be the standardized number of copies of $\G$
	in the Erd\H{o}s--R\'{e}nyi random graph $G(n,p)$.
	And assume that $n \geq 4\vertices{\G}^2$.
	Then for all $t \geq 0$ there is
	\begin{align*}
		\bigg\vert
			\frac{\Pp(W>t)}{1-\Phi(t)} -1
		\bigg\vert
		\leq
			50c_\G
			\cdot \exp\Big(
				c_\G \cdot t^2 \cdot s(t)
				\Big)
			\cdot (1+t^2)
			\cdot s(t)
		,
	\end{align*}
	with
	\begin{align}
		s(t) &=			
			\bigg(
				1 + \frac{t}{\min\{\sqrt{\Psimin},1\}}
			\bigg)
			\frac{1}{\sqrt{q\Psimin}}
			\cdot
			\exp\bigg(
				\frac{5Dt}{\sigma}
			\bigg)
		\label{sg:fct:s}
		\intertext{and}
		c_\G &= 
		\frac{ 2^{\frac92 + \frac{15}2\edges{\G}} \cdot (\vertices{\G}!)^4 \cdot \edges{\G}}{\aut{\G}^\frac32}
		.
		\label{sg:const:c}
	\end{align}
\end{theorem}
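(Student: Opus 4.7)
The plan is to view $G(n,p)$ as generated by the $\binom{n}{2}$ independent two-valued edge indicators, which after centering and rescaling fit into the Rademacher framework of the paper. The standardized subgraph count $W$ is then a Rademacher functional of chaos order at most $\edges{\G}$, to which the general Cram\'{e}r-type moderate deviation theorem for Rademacher functionals developed earlier in the paper can be applied.

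I would proceed in three steps. First, instantiate the general theorem with $F=W$ to obtain a bound on $\abs{\Pp(W>t)/(1-\Phi(t))-1}$ in terms of a short list of Malliavin--Stein quantities of $W$ --- typically an $L^2$-bound on $\bracket{DW,-DL^{-1}W}-1$, a third-moment-type remainder and a moment generating function estimate --- multiplied by $(1+t^2)$ and an exponential factor that is linear in $t$. Second, bound each such quantity. Since $W$ is, up to normalization, a sum of indicators of injective vertex maps inducing copies of $\G$, every Malliavin functional that appears reduces, by expanding the discrete derivative with respect to a single edge and its iterates, to a combinatorial sum over tuples of copies of $\G$ that share at least one edge. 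Using the overlap calculus already developed for the Kolmogorov bound of \cite{ERTZ22}, all of these sums are of order $1/\sqrt{q\Psimin}$, and a careful bookkeeping of the underlying combinatorial factors (vertex tuples, subgraph isomorphism classes, automorphism counts of $\G$) produces the explicit constant $c_\G$ in \eqref{sg:const:c}. Third, produce the inner exponential factor $\exp(5Dt/\sigma)$ in $s(t)$: here $D$ is a uniform upper bound for the discrete difference of $W$ with respect to a single edge and $\sigma$ is the standard deviation of the unstandardized copy count. This exponential shape arises by feeding the corresponding $L^\infty$-bound into the moment generating function control built into the paper's adaptation of Zhang's \cite{Z23} reworking of Stein's method.

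The main obstacle, beyond constant tracking, is handling the two regimes $\Psimin\geq 1$ and $\Psimin<1$ simultaneously. In the first regime the single-edge influence of $W$ is already bounded by a quantity of order $1/\sqrt{q\Psimin}$, so the linear-in-$t$ term in $s(t)$ becomes $t/\sqrt{q\Psimin}$ and one both recovers and sharpens Zhang's \eqref{sg:ineq:ZhangSimplified}. In the second regime one must instead use the trivial bound by $1$, which is precisely why the factor $1/\min\{\sqrt{\Psimin},1\}$ appears in \eqref{sg:fct:s}. A further delicate point is to verify that the hypothesis $n\geq 4\vertices{\G}^2$ suffices to absorb all lower order combinatorial error terms coming from subgraph isomorphism counting into the single constant $c_\G$; this should follow from repeated applications of $\binom{n}{k}\leq n^k/k!$ at each stage of the overlap expansion, together with the comparison $\Psimin\leq n^2p$ noted in the introduction.
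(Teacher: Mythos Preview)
Your high-level plan---instantiate the paper's general moderate-deviation theorem with $F=W$ and bound the resulting Malliavin quantities combinatorially---is correct, but the proposal misses the central technical difficulty and misidentifies the origin of the exponential factor.

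The conditions \eqref{A1} and \eqref{A2} of Theorem~\ref{Theorem:MD_Rademacher} are \emph{not} separate $L^2$-bounds on the Malliavin functionals together with a moment generating function estimate. They require bounding expressions of the form
\[
\E\big[\,\vert 1-\langle DW,-DL^{-1}W\rangle\vert\, e^{tW}\big]
\quad\text{and}\quad
\E\big[\,\vert \delta(\tfrac{1}{\sqrt{pq}}DW\vert DL^{-1}W\vert)\vert\, e^{tW}\big]
\]
by $\gamma_i(t)\,\E[e^{tW}]$, where the Malliavin functional and $e^{tW}$ are \emph{dependent}. Simply multiplying an $L^2$-bound by an MGF bound does not give this. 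The paper first applies Cauchy--Schwarz, and then---this is the missing ingredient in your proposal---an \emph{iterated Taylor expansion} of $e^{tW}$ around the part independent of the copies $\graph_1,\dots,\graph_4$ currently being summed over. This produces four terms $S_{i,1},\dots,S_{i,4}$ in which the Malliavin piece and the exponential piece become (progressively more) decoupled; the higher-order Taylor remainders absorb an extra $t/\sigma$ or $t^2/\sigma^2$, which is what generates the $t/\min\{\sqrt{\Psimin},1\}$ factor in $s(t)$ via the second part of the connected-copies counting Lemma~\ref{sg:lem:connected}.

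Relatedly, your description of $D$ and of the factor $\exp(5Dt/\sigma)$ is off. In the paper $D$ is the cardinality of the neighborhood $\copiesk{\graph}$ of a single copy $\graph$ (the number of copies sharing at least one edge with $\graph$), not an $L^\infty$-bound on a discrete difference of $W$. The factor $\exp(5Dt/\sigma)$ does not come from feeding an $L^\infty$ bound into an MGF inequality; it arises from Lemma~\ref{sg:lem:momgenfunc}, which controls $\E[e^{\frac{t}{\sigma}\sum_{\graph\notin A}B_\graph^c}]$ by $e^{\frac{t}{\sigma}|A|}\,\E[e^{tW}]$, applied with $A=\copiesk{\graph_1\cup\dots\cup\graph_5}$ so that $|A|\le 5D$. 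Without the Taylor-expansion decoupling and this lemma, you have no mechanism to produce $\gamma_i(t)$ of the required form.
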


The result holds for all $t \geq 0$.
Similar to the approach by \cite{Z19},
the result can be simplified
by restricting $t$ to be smaller than a suitable bound:

\begin{corollary}[Subgraph counts in the Erd\H{o}s--R\'{e}nyi random graph --- bounded domain]
	\label{sg:cor:main}
	Let $\G$ be a graph with at least one edge.
	Let $W$ be the standardized number of copies of $\G$
	in the Erd\H{o}s--R\'{e}nyi random graph $G(n,p)$.
	Let $c_1, c_2 >0$ be positive numbers.
	Assume that
	\begin{align*}
		n &\geq 4\vertices{\G}^2,
	&
		0
		\leq
		t
		&\leq
			c_1 \cdot
			\frac{ n^2 \cdot p^{\edges{\G}} \cdot \sqrt{q}}{\sqrt{\Psimin}},
	&
		t^2 \cdot s(t) &\leq c_2,
	\end{align*}
	with $s(t)$ as given in \eqref{sg:fct:s}.
	Then
	\begin{align*}
		\bigg\vert
			\frac{\Pp(W>t)}{1-\Phi(t)} -1
		\bigg\vert
		\leq
			C_{c_1,c_2,\G} \cdot \frac{1+t^3}{\sqrt{q\Psimin}}
		,
	\end{align*}
	where
	\begin{align*}
		C_{c_1,c_2,\G}
		&= 
			100
			(1+c_1)
			\cdot c_\G
			\cdot e^{5c_1 \hat c_\G + c_2 c_\G}
		,
	\end{align*}
	with $\hat c_\G = \sqrt{2} \cdot \frac{ \sqrt{\vertices{\G}!} \cdot \vertices{\G}^2 \cdot \edges{\G} }{ \sqrt{\aut{\G}} }$
	and $c_\G$ as given in \eqref{sg:const:c}.
\end{corollary}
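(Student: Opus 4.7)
The plan is to start from the explicit bound of Theorem~\ref{sg:theo:main} and show that, under the sharper hypotheses of the corollary, every factor outside the desired shape $(1+t^3)/\sqrt{q\Psimin}$ can be absorbed into a constant depending only on $c_1$, $c_2$, and $\G$. Three ingredients must be simplified: the outer exponential $\exp(c_\G t^2 s(t))$, the inner exponential $\exp(5Dt/\sigma)$ carried by $s(t)$, and the algebraic prefactor $1 + t/\min\{\sqrt{\Psimin},1\}$ appearing in \eqref{sg:fct:s}.

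The outer exponential is immediate: since $t^2 s(t) \leq c_2$ by assumption, $\exp(c_\G t^2 s(t)) \leq e^{c_2 c_\G}$. For the inner exponential I would use the estimate $D/\sigma \leq \hat c_\G \cdot \sqrt{\Psimin}/(n^2 p^{\edges{\G}}\sqrt{q})$ that will already have appeared in the proof of Theorem~\ref{sg:theo:main} (this is the natural bound relating the maximum influence $D$ to the standard deviation $\sigma$ of the unnormalized count, and is the source of the constant $\hat c_\G$ in the statement). Combined with the hypothesis $t \leq c_1 \cdot n^2 p^{\edges{\G}}\sqrt{q}/\sqrt{\Psimin}$, it yields $\exp(5Dt/\sigma) \leq e^{5c_1 \hat c_\G}$.

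The algebraic prefactor is handled by a short case distinction. If $\Psimin \geq 1$, then $\min\{\sqrt{\Psimin},1\} = 1$ and the factor is simply $1+t$. If instead $\Psimin < 1$, the elementary inequality $\Psimin \geq n^2 p^{\edges{\G}}$ stated in the excerpt, combined with the assumption on $t$, gives $t \leq c_1 \sqrt{q\Psimin} \leq c_1 \sqrt{\Psimin}$, so $t/\sqrt{\Psimin} \leq c_1$ and the factor is bounded by $1+c_1 \leq (1+c_1)(1+t)$. In both cases, $1 + t/\min\{\sqrt{\Psimin},1\} \leq (1+c_1)(1+t)$. Finally, the identity $2(1+t^3) - (1+t^2)(1+t) = (t-1)^2(t+1) \geq 0$ for $t \geq 0$ turns $(1+t^2)(1+t)$ into $2(1+t^3)$.

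Assembling the four simplifications multiplies the original constant $50 c_\G$ by $e^{c_2 c_\G} \cdot 2(1+c_1) \cdot e^{5c_1 \hat c_\G}$, yielding exactly $C_{c_1,c_2,\G} = 100(1+c_1)\, c_\G\, e^{5c_1\hat c_\G + c_2 c_\G}$ and the rate $(1+t^3)/\sqrt{q\Psimin}$. The only non-routine step is the bound on $D/\sigma$ taken from the proof of Theorem~\ref{sg:theo:main}; the remainder is a case analysis for the $\min$ and an elementary cubic identity, so no new analytic input is needed.
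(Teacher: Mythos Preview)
Your proof is correct and follows essentially the same route as the paper's own argument: bound the outer exponential via $t^2 s(t)\leq c_2$, bound the inner exponential via the estimate $D/\sigma \leq \hat c_\G \sqrt{\Psimin}/(n^2 p^{\edges{\G}}\sqrt{q})$, reduce the algebraic prefactor to $(1+c_1)(1+t)$ by a case split on $\Psimin \gtrless 1$, and finish with $(1+t^2)(1+t)\leq 2(1+t^3)$. One minor correction: the bound on $D/\sigma$ does not appear in the proof of Theorem~\ref{sg:theo:main} but is invoked in the paper's proof of the corollary itself, citing Lemma~4.2 of \cite{ER22}; this is purely a matter of attribution and does not affect your argument.
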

In case of $\liminf_{n \rightarrow \infty} q >0$,
our result is of the same order as the result of \cite{Z19} presented in \eqref{sg:ineq:ZhangSimplified}.
However, in case of $\liminf_{n \rightarrow \infty} q = 0$, our result yields the better rate.

We give an overview how the remaining parts of this paper are structured. In Section 2 we list important operators used in Malliavin calculus and give a short introduction to Stein's method. The new moderate deviations for general non-linear functionals of possibly infinite Rademacher random variables are presented in Section 3. The proof of Theorem~\ref{Theorem:MDP_Tworuns} is shown in Section 4 and the proofs of Theorem~\ref{sg:theo:main} and Corollary~\ref{sg:cor:main} follow in Section 5.

\section{Preliminaries}
\noindent In this section we list all the definitions and notions we deal with, in particular the operators from Malliavin calculus. Since this is just a summary, we refer to \cite{NP12} for details and to \cite{Webpage} for further results related to the topic. For the setting of Bernoulli processes see also \cite{Pr08}.
\\ Throughout the upcoming definitions we will need the following notations: Let $l^2(\N)$ be the space of real square-summable sequences. Moreover, by $l^2(\N)^{\otimes p}$ we mean the $p$th tensor product of $l^2(\N)$ for $p \in \N$. Relevant subsets are $l^2(\N)^{\circ p}$, the symmetric functions in $l^2(\N)^{\otimes p}$, and $l_0^2(\N)^{\circ p}$, the symmetric functions in $l^2(\N)^{\otimes p}$ which vanish on diagonals.
\\ We start with $X = (X_k)_{k \in \N}$, a sequence of Rademacher random variables, e.g. $\forall k \in \N$:
\begin{align*}
\Pp(X_k = 1) & = p_k \in (0,1), \\
\Pp(X_k = -1) & = q_k = 1 - p_k,
\end{align*}
and, if needed, the standardized random variable
\begin{align*}
Y_k = \frac{X_k - p_k + q_k}{2 \sqrt{q_k p_k}}.
\end{align*}
We are interested in random variables $F \in L^2(\Omega, \sigma(X), \Pp)$ and we will use that in our setting $F$ can be written as 
\begin{align*}
F = \mathbb{E}[F] + \sum_{n=1}^\infty J_n(f_n),
\end{align*}
where
\begin{align*}
	J_n(f) = \sum_{(i_1,...,i_n) \in \Delta^n} f(i_1,...,i_n)Y_{i_1}Y_{i_2}... Y_{i_n} 
\end{align*}
with $f \in l_0^2(\N)^{\circ n}$ and $\Delta_n := \{ (i_1,...,i_n) \in \N^n : i_j \neq i_k \, \text{for} \, j \neq k \}$. $J_n(f)$ is called the $n$th discrete multiple integral. 
For $F = f(X) = f(X_1,X_2,...) \in L^1(\Omega, \sigma(X), \Pp)$ we define the discrete gradient $D_kF$ of $F$ at $k$th coordinate:
\begin{align*}
D_kF & := \sqrt{p_k q_k} (F_k^+ - F_k^-), \\
DF & := (D_1F, D_2F,...),
\end{align*}
where $F_k^+ := f(X_1,...,X_{k-1},+1,X_{k+1},...)$ and $F_k^- := f(X_1,...,X_{k-1},-1,X_{k+1},...), \, k \in \N$. For the main result we will consider
\begin{align*}
F \in \D^{1,2} := \{F \in L^2(\Omega, \sigma(X), \Pp) \vert \, \mathbb{E}[\norm{DF}_{l^2(\N)}^2] < \infty \},
\end{align*}
where
\begin{align*}
\mathbb{E}[\norm{DF}_{l^2(\N)}^2] := \mathbb{E}\left[ \sum_{k \in \N} (D_k F)^2 \right].
\end{align*}
Next we define the divergence operator $\delta$, also known as Skorokhod operator, and its domain $Dom(\delta)$. For $u := (u_k)_{k \in \N} \in (L^2(\Omega))^\N$ with
\begin{align*}
	u_k := \sum_{n=1}^{\infty} J_{n-1}(f_{n}(\cdot,k)),
\end{align*}
where $f_{n} \in l_0^2(\N)^{\circ n-1} \otimes  l^2(\N)$ for $n \in \N$, we say that $u \in Dom(\delta)$, if
\begin{align*}
	\sum_{n=1}^\infty  n! \norm{\widetilde{f_{n}}\Un_{\Delta_{n}}}^2_{ l^2(\N)^{\otimes n}} < \infty.
\end{align*}
By $\tilde{f}(k_1,...,k_n) := \frac{1}{n!} \sum_{\sigma \in \mathcal{G}_n} f(k_{\sigma(1)},...,k_{\sigma(n)}) $ we mean the canonical symmetrization of a function $f$ in n variables such that $\mathcal{G}_n$ is the symmetric group on $\{1,...,n\}$. Then, for $u \in Dom(\delta)$, the operator $\delta$ is given by
\begin{align*}
	\delta(u) := \sum_{n=1}^\infty J_{n}\prth{\widetilde{f_{n}}\Un_{\Delta_{n}}}.
\end{align*}
% $\delta$ of $D$, is 
Another way to characterize $\delta$ is by the duality
\begin{align} \label{Skorokhod}
	\mathbb{E}[\langle DF, u \rangle] = \mathbb{E}[F \delta(u)], \quad F \in \D^{1,2}, u \in Dom(\delta),
\end{align} 
such that we can identify $\delta$ as the adjoint operator of $D$. Furthermore we can rewrite its domain to
\begin{align*}
Dom(\delta) = \left \{ u \in L^2(\Omega, l^2(\N)) \bigg \vert \exists \, C_u > 0 \, \forall \, F \in \D^{1,2} : \abs{\mathbb{E}[\langle DF, u \rangle]} \leq C_u \sqrt{\mathbb{E}[F^2]} \right \}.
\end{align*}
For 
\begin{align*}
	F \in Dom(L) = \left \{ F = \mathbb{E}[F] + \sum_{n=1}^\infty J_n(f_n) \in  L^2(\Omega, \sigma(X), \Pp) \bigg \vert \sum_{n=1}^\infty n^2 n! \norm{f_n}^2_{ l^2(\N)^{\otimes n}}  < \infty \right \} 
\end{align*}
we define by 
\begin{align*}
LF := \sum_{n=1}^\infty -n J_n(f_n), \quad L^{-1}F := \sum_{n=1}^\infty - \frac{1}{n}J_n(f_n)
\end{align*}
the Ornstein--Uhlenbeck operator $L$ and the pseudo-inverse Ornstein--Uhlenbeck operator $L^{-1}$. You can show that $F \in Dom(L)$ is equivalent to $F \in \D^{1,2}$ and $DF \in Dom(\delta)$; in this case, it holds that
\begin{align} \label{Malliavin_operators}
L = -\delta D.
\end{align}
At last we recall the main ideas of Stein's method for normal approximation, starting with the important characterisation
\begin{align}\label{Steinlemma}
	Z \sim \mathcal{N}(0,1) \Leftrightarrow \mathbb{E}[f^\prime(Z) - Z f(Z)] = 0
\end{align}
for all continuous differentiable $f$ such that the appearing expectations exist. So if a random variable is in some sense close to $\mathcal{N}(0,1)$, it is likely that the expectation in \eqref{Steinlemma} is close to 0.  This motivates the Stein-equation, written in the case of Kolmogorov distance, namely
\begin{align*}
f_z^\prime(F) - F f_z(F) = \Unens{F \leq z} - \Phi(z),
\end{align*}
respectively
\begin{align*}
\mathbb{E}[f_z^\prime(F) - F f_z(F)] = \Pp(F \leq z) - \Phi(z).
\end{align*}
The solution to this equation is given, see Lemma 2.1 in \cite{BC05}, by
\begin{align}\label{Stein_solution}
f_z(w) = \begin{cases}
\frac{\Phi(w) (1-\Phi(z))}{p(w)} & w \leq z, \\
\frac{\Phi(z) (1-\Phi(w))}{p(w)} & w > z,
\end{cases}
\end{align}
where $p(w) = e^{-w^2/2} / \sqrt{2 \pi}$ is the density of $\mathcal{N}(0,1)$. Later on we will use the following bounds, see Lemma 2.3 in \cite{CGS11}:
\begin{align}\label{Stein_prop_1}
	\frac{1-\Phi(w)}{p(w)} \leq \min\left\{\frac{1}{w}, \frac{\sqrt{2\pi}}{2} \right\},  \quad w > 0,
\end{align}
\begin{align}\label{Stein_prop_2}
\abs{w f_z(w)} \leq 1, \quad w \in \R,
\end{align}
and
\begin{align}\label{Stein_prop_3}
\abs{w f_z(w)} \leq 1 - \Phi(z), \quad w < 0 \leq z.
\end{align}
Note that \eqref{Stein_prop_3} follows with \eqref{Stein_solution} and \eqref{Stein_prop_1} by writing
\begin{align*}
	\abs{w f_z(w)} & = \abs{w} \sqrt{2\pi} e^{w^2/2} \Phi(w) (1-\Phi(z)) \\
	& = (1-\Phi(z))  (1-\Phi(\abs{w})) \sqrt{2\pi} \abs{w}e^{\abs{w}^2/2} \\
	& \leq (1-\Phi(z)) ,
\end{align*}
where we also used the symmetry of $\Phi$. We will need this more precise bound of $\abs{w f_z(w)}$ for the main result in section 3, where we distinguish different cases for $w \in \R$.

\section{Main Results}
\noindent Now we present the main theorem of our paper.
\begin{theorem}[Moderate deviations for Rademacher functionals]\label{Theorem:MD_Rademacher}
Let $F \in \D^{1,2}$ with $\mathbb{E}[F] = 0, \Var(F) = 1$, and
\begin{align*}
F f_z(F) + \Unens{F>z} \in \D^{1,2} \quad \forall \,  z \in \R,
\end{align*}
\begin{align*}
\frac{1}{\sqrt{pq}} DF \abs{D L^{-1} F}  \in Dom(\delta).
\end{align*}
Assume that there exists a constant $A > 0$ and increasing functions $\gamma_1(t),\gamma_2(t)$ such that $e^{tF} \in \D^{1,2}$ and
\makeatletter\tagsleft@true\makeatother
\begin{align}
\mathbb{E} \left[ \abs{ 1 - \langle DF , - DL^{-1} F \rangle } e^{tF}\right]
&\leq \gamma_1(t) \mathbb{E} \left[ e^{tF}\right],
\tag{A1}\label{A1}\\
\mathbb{E} \left[ \abs{  \delta\prth{ \frac{1}{\sqrt{pq}} DF \abs{D L^{-1} F} } } e^{tF}\right]
&\leq \gamma_2(t) \mathbb{E} \left[ e^{tF}\right],
\tag{A2}\label{A2}
\end{align}
\makeatletter\tagsleft@false\makeatother
for all $0 \leq t \leq A$.
For $d_0 \geq 0$, let
\begin{align*}
A_0(d_0) := \max\left\{0 \leq t \leq A : \frac{t^2}{2}\prth{\gamma_1(t) + \gamma_2(t)} \leq d_0 \right\}.
\end{align*}
Then, for any $d_0 \geq 0$,
\begin{align*}
\abs{ \frac{\Pp(F > z)}{1 - \Phi(z)} - 1} \leq 25 e^{d_0} (1+z^2) \prth{\gamma_1(z) + \gamma_2(z)}
\end{align*}
provided that $0 \leq z \leq A_0(d_0)$.
\end{theorem}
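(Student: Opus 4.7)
The plan is to follow the strategy of Zhang \cite{Z23} within the Rademacher Malliavin--Stein framework: start from the Kolmogorov Stein equation recalled in Section 2, express the Stein remainder via the discrete integration by parts $\E[F\phi(F)] = \E[\bracket{D\phi(F),-DL^{-1}F}]$, and at every step keep the weight $e^{tF}$ inside the expectation so that \eqref{A1} and \eqref{A2} can be invoked. The function $A_0(d_0)$ is precisely the threshold past which the resulting exponential control breaks down.

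I would first establish a bound on the moment generating function $m(t) := \E[e^{tF}]$. Differentiating and applying the integration by parts to $\E[Fe^{tF}]$, the discrete chain rule for $D$ produces the leading term $t\,\E[e^{tF}\bracket{DF,-DL^{-1}F}]$ plus a Taylor remainder which, after a resummation against the adjoint $\delta$, is bounded by $t^2\,\E[|\delta((pq)^{-1/2} DF\,|DL^{-1}F|)|\,e^{tF}]$. Combining with \eqref{A1} and \eqref{A2} gives
\begin{align*}
	|m'(t) - t\,m(t)| \leq \bigl(t\,\gamma_1(t) + t^2\,\gamma_2(t)\bigr)\,m(t),
\end{align*}
and a Gronwall step, together with $\tfrac{t^2}{2}(\gamma_1(t)+\gamma_2(t)) \leq d_0$ on $[0,A_0(d_0)]$, yields $m(t) \leq \exp(t^2/2 + d_0)$ for $0 \leq t \leq A_0(d_0)$.

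Next I would turn to the ratio itself. Using $\Pp(F>z) - (1-\Phi(z)) = \E[f_z'(F) - F f_z(F)]$ and the same integration by parts, now with $\phi = f_z$, I would obtain
\begin{align*}
	\E[f_z'(F) - Ff_z(F)]
	= \E\!\left[ f_z'(F)\bigl(1 - \bracket{DF,-DL^{-1}F}\bigr) \right] + R(z),
\end{align*}
where $R(z)$ is the discrete Taylor remainder arising from the fact that $f_z$ is only piecewise smooth; by the Rademacher identity underlying Theorem 3.1 in \cite{ERTZ22}, $R(z)$ is controlled by an expectation featuring $\delta((pq)^{-1/2} DF\,|DL^{-1}F|)$ times the $L^\infty$-norm of $f_z$. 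Splitting each expectation over the regions $\{F\leq 0\}$, $\{0<F\leq z\}$, $\{F>z\}$ and using the bounds on $f_z$ and $wf_z(w)$ from Section 2 (which carry an extra factor $1-\Phi(z)$ on $\{F\leq z\}$), the only delicate contribution is the tail $\{F>z\}$. There I would apply the Chernoff estimate $\E[|X|\,\1{F>z}] \leq e^{-tz}\E[|X|\,e^{tF}]$, plug in \eqref{A1}/\eqref{A2}, choose $t=z$, and invoke the MGF bound from the first step together with the lower bound $1-\Phi(z) \geq \frac{z}{(1+z^2)\sqrt{2\pi}}\,e^{-z^2/2}$ to obtain a contribution of order $e^{d_0}(1+z^2)(\gamma_1(z)+\gamma_2(z))$. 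Tracking the numerical constants produces the prefactor $25$.

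The main obstacle is Step 2: one must reproduce the Rademacher integration-by-parts formula of \cite{ERTZ22} in a form that keeps the exponential weight $e^{tF}$ inside every expectation, which is exactly what forces the absolute value $|DL^{-1}F|$ in \eqref{A2} and requires justifying that the discrete remainder is integrable against $e^{tF}$ for every $t \leq A$. Once this weighted identity is in hand, Steps 1 and 3 reduce to fairly standard Gronwall and Mill's-ratio arguments.
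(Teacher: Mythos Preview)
Your outline is essentially the paper's proof: the MGF bound via a Gronwall argument on $m'(t)$ (the paper's Lemma~\ref{Lemma:BoundMomentGeneratingFunction}), the Stein decomposition into a $J_1$-term carrying $1-\bracket{DF,-DL^{-1}F}$ and a $J_2$-term carrying $\delta\bigl((pq)^{-1/2}DF\,\abs{DL^{-1}F}\bigr)$, the three-region split, and Chernoff on the tail with $t=z$ are exactly what is done. Two points deserve correction. First, in the MGF step the discrete remainder contributes $t\gamma_2(t)$, not $t^2\gamma_2(t)$: the paper bounds $\abs{R_k}\leq (p_kq_k)^{-1/2}D_ke^{tF}\,D_kF$ and passes back to $\delta$ by duality, picking up only one factor of $t$. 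Second, your claim that ``the only delicate contribution is the tail $\{F>z\}$'' understates the work: on $\{0<F\leq z\}$ one has $\abs{Ff_z(F)}\leq\sqrt{2\pi}(1-\Phi(z))\,Fe^{F^2/2}$, and bounding $\E\bigl[\abs{H}\,Fe^{F^2/2}\Unens{0\leq F\leq z}\bigr]$ by $6e^{d_0}(1+z^2)\gamma_i(z)$ requires a separate argument (the paper's Lemma~\ref{Lemma:TermsTimesFfF}, following \cite{CFS13}) in which $[0,z]$ is partitioned into unit intervals and \eqref{A1}/\eqref{A2} applied on each with $t=j$. The $L^\infty$-norm of $f_z$ alone is too crude there; the precise multiplier in $J_2$ is $Ff_z(F)+\Unens{F>z}$, not $\norm{f_z}_\infty$.
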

In consequence, the following result is achieved.
\begin{theorem}\label{Theorem:MD_Rademacher-short}
	Under the assumptions from Theorem~\ref{Theorem:MD_Rademacher}, there is
	\begin{align*}
		\abs{ \frac{\Pp(F > z)}{1 - \Phi(z)} - 1}
		&\leq 25 e^{\frac{z^2}{2}\prth{\gamma_1(z) + \gamma_2(z)}} (1+z^2) \prth{\gamma_1(z) + \gamma_2(z)}
	\end{align*}
	for all $0 \leq z \leq A$.
\end{theorem}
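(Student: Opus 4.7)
The plan is to deduce Theorem~\ref{Theorem:MD_Rademacher-short} from Theorem~\ref{Theorem:MD_Rademacher} by the simple device of tuning the free parameter $d_0$ in $z$. For each fixed $z \in [0,A]$, I would set
$$d_0 := \frac{z^2}{2}\prth{\gamma_1(z) + \gamma_2(z)},$$
so that the right-hand side of the bound in Theorem~\ref{Theorem:MD_Rademacher}, namely $25\,e^{d_0}(1+z^2)(\gamma_1(z)+\gamma_2(z))$, matches the desired bound exactly. The only thing that still needs to be checked is that $z$ lies in the admissible range of Theorem~\ref{Theorem:MD_Rademacher}, i.e.\ that $z \leq A_0(d_0)$.

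To verify this I would use monotonicity. By hypothesis $\gamma_1,\gamma_2:[0,A]\to[0,\infty)$ are increasing, hence nonnegative, so the map
$$\varphi(t) := \frac{t^2}{2}\prth{\gamma_1(t)+\gamma_2(t)}$$
is nondecreasing on $[0,A]$ as a product of nonnegative nondecreasing functions. In particular $\varphi(z) = d_0$, which means that $z$ itself belongs to the set $\{t \in [0,A] : \varphi(t) \leq d_0\}$ whose supremum defines $A_0(d_0)$. Therefore $A_0(d_0) \geq z$, and Theorem~\ref{Theorem:MD_Rademacher} applies at this $z$, yielding precisely the claimed estimate.

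There is essentially no obstacle: all the real work is encapsulated in Theorem~\ref{Theorem:MD_Rademacher}, and Theorem~\ref{Theorem:MD_Rademacher-short} amounts to a repackaging that trades the implicit dependence on $d_0$ for an explicit, self-referential exponential factor in $z$. The one subtlety worth noting is that the nonnegativity of $\gamma_1,\gamma_2$ (implicit in their role as bounds for absolute values of expectations) is needed alongside monotonicity to ensure that $\varphi$ is nondecreasing; without nonnegativity the product with $t^2$ could in principle decrease.
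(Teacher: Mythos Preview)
Your proposal is correct and follows essentially the same approach as the paper: fix $z\in[0,A]$, choose $d_0=\tfrac{z^2}{2}(\gamma_1(z)+\gamma_2(z))$, observe that $z$ lies in the defining set for $A_0(d_0)$ so that $z\leq A_0(d_0)$, and apply Theorem~\ref{Theorem:MD_Rademacher}. Your monotonicity discussion of $\varphi$ is a slight elaboration beyond what is strictly needed---since $\varphi(z)=d_0$ already places $z$ in the set $\{t\in[0,A]:\varphi(t)\leq d_0\}$, the inequality $z\leq A_0(d_0)$ follows immediately from the definition of $A_0(d_0)$ as the maximum of that set---but it does no harm.
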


 As a first application we treat the i.i.d.-case: For our sequence $(X_i)_{i \in \N}$ of Rademacher random variables we consider the standardized $n$th partial sum
\begin{align*}
	F := F_n = \frac{1}{\sqrt{n}} \sum_{i = 1}^n Y_i := \frac{1}{\sqrt{n}} \sum_{i = 1}^n \frac{X_i - (2p-1)}{2 \sqrt{pq}}.
\end{align*}
The classical result can be received:
\begin{corollary}\label{Corollary:MDP_IID}
	Recall the definition of $F_n$ from above. Then 
	\begin{align}
		\frac{\Pp(F_n > z)}{1 - \Phi(z)} = 1 + O(1) (1+z^2) \gamma_{n}(z),
	\end{align}
	for $0 \leq z \leq n^{1/6}$ such that $(1+z^2) \gamma_{n}(z) \leq 1$, where O(1) is bounded by a constant and
	\begin{align*}
		\gamma_{n}(z) & := e^{O(1)z n^{-1/2}} \frac{(1 + z)}{\sqrt{n}}.
	\end{align*}
\end{corollary}

\begin{remark}
	We obtain the optimal range $0 \leq z \leq n^{1/6}$ from \eqref{MDCramer2}, but there is no $\log(n)$ in our error term compared to \cite[Corollary~2.2]{FK22}. The proof of  Corollary~\ref{Corollary:MDP_IID} is basically a shorter version of the proof of Theorem~\ref{Theorem:MDP_Tworuns}.
\end{remark}
For the proof of  Theorem~\ref{Theorem:MD_Rademacher} we will need two auxiliary lemmas.
\begin{lemma}[Bound for the moment generating function]\label{Lemma:BoundMomentGeneratingFunction}
	Under the assumptions of Theorem~\ref{Theorem:MD_Rademacher}, for $0 \leq t \leq A$, we have
	\begin{align}\label{BoundMomentGeneratingFunction1}
		\mathbb{E} \left[ e^{tF}\right] \leq \exp\left\{ \frac{t^2}{2}(1 + \gamma_1(t) + \gamma_2(t)) \right\}.
	\end{align}
	Then, for $0 \leq t \leq A_0(d_0)$,
	\begin{align}\label{BoundMomentGeneratingFunction2}
		\mathbb{E} \left[ e^{tF}\right] \leq e^{d_0} e^{t^2/2}.
	\end{align}
\end{lemma}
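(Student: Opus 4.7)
The second bound \eqref{BoundMomentGeneratingFunction2} is an immediate corollary of the first: for $0 \leq t \leq A_0(d_0)$ the defining inequality $\tfrac{t^2}{2}(\gamma_1(t)+\gamma_2(t)) \leq d_0$ together with \eqref{BoundMomentGeneratingFunction1} gives $\mathbb{E}[e^{tF}] \leq \exp\!\bigl(\tfrac{t^2}{2} + d_0\bigr) = e^{d_0} e^{t^2/2}$. So the real work lies in proving \eqref{BoundMomentGeneratingFunction1}, and the plan is a Gronwall-type argument applied to $m(t) := \mathbb{E}[e^{tF}]$. The finiteness hypothesis on $m$ over $[0,A]$ allows differentiation under the expectation, giving $m'(t) = \mathbb{E}[Fe^{tF}]$ with $m(0) = 1$, and the target will follow once we establish the differential inequality
$$
m'(t) \;\leq\; t\bigl(1+\gamma_1(t)+\gamma_2(t)\bigr)\,m(t),
\qquad 0 \leq t \leq A. \qquad(\ast)
$$

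To prove $(\ast)$, I would rerun the Stein--Malliavin argument from the proof of Theorem~\ref{Theorem:MD_Rademacher}, now taking the smooth test function $\phi(F) = e^{tF}$ in place of $f_z(F)$. Because $\phi$ is smooth, the ``jump'' contribution $\mathbf{1}_{F>z}$ that appeared in the Kolmogorov version disappears, and the analogue of the split $|\mathbb{E}[\phi'(F) - F\phi(F)]| \leq J_1 + J_2$ now applies to $\mathbb{E}[\phi'(F) - F\phi(F)] = tm(t) - m'(t)$. The analog of $J_1$ is $t\,\mathbb{E}[e^{tF}\,|1 - \langle DF, -DL^{-1}F\rangle|]$, bounded by $t\gamma_1(t)\,m(t)$ via \eqref{A1}. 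The analog of $J_2$ is the term involving $\delta\bigl(\tfrac{1}{\sqrt{pq}}\,DF\,|DL^{-1}F|\bigr)$; the discrete chain rule $D_k e^{tF} = t\,e^{t\xi_k}\,D_k F$ (mean value theorem applied to $e^{t\cdot}$, $\xi_k$ between $F_k^-$ and $F_k^+$) extracts an overall factor $t$ from the $\phi$-differences feeding into $J_2$, and what is left is controlled by $\gamma_2(t)\,m(t)$ using \eqref{A2}, for a bound $t\gamma_2(t)\,m(t)$. Summing yields $|tm(t) - m'(t)| \leq t(\gamma_1(t)+\gamma_2(t))\,m(t)$, which implies $(\ast)$.

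Once $(\ast)$ is in hand, dividing by $m(t) > 0$ and integrating on $[0,t]$, using $m(0)=1$ and the fact that $\gamma_1,\gamma_2$ are increasing,
$$
\log m(t) \;\leq\; \int_0^t s\bigl(1+\gamma_1(s)+\gamma_2(s)\bigr)\,ds
\;\leq\; \frac{t^2}{2}\bigl(1+\gamma_1(t)+\gamma_2(t)\bigr),
$$
and exponentiation gives \eqref{BoundMomentGeneratingFunction1}. The main obstacle is the careful derivation of the $J_2$-analog with the correct $t$-prefactor: in the Kolmogorov setting $f_z'$ is bounded and no such factor appears, but for $\phi = e^{tF}$ each first-order discrete difference of $\phi$ produces a factor of $t$, and tracking exactly one such $t$ through the integration-by-parts leading to $\delta\bigl(\tfrac{1}{\sqrt{pq}}\,DF\,|DL^{-1}F|\bigr)$ is what upgrades a naive $\int_0^t \gamma_2(s)\,ds$ bound to the sharp $\tfrac{t^2}{2}\gamma_2(t)$ contribution needed for \eqref{BoundMomentGeneratingFunction1}.
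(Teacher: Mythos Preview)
Your proposal is correct and follows the same Gronwall strategy as the paper: differentiate $m(t)=\mathbb{E}[e^{tF}]$, apply Malliavin integration by parts to $\mathbb{E}[Fe^{tF}]=\mathbb{E}[\langle De^{tF},-DL^{-1}F\rangle]$, split off the main term plus the $\gamma_1$-error via \eqref{A1}, and control the chain-rule defect by $\gamma_2$ via \eqref{A2}. The one step you leave implicit---how the remainder connects to \eqref{A2}---is handled in the paper not through the mean-value form $D_ke^{tF}=te^{t\xi_k}D_kF$ but through the integral representation $D_ke^{tF}=te^{tF}D_kF+tR_k$ with $R_k=\sqrt{p_kq_k}\int_{F_k^-}^{F_k^+}(e^{tu}-e^{tF})\,du$, the elementary bound $|R_k|\le\frac{1}{\sqrt{p_kq_k}}(D_ke^{tF})(D_kF)$, and then a \emph{second} integration by parts turning $\mathbb{E}\big[\langle De^{tF},\frac{1}{\sqrt{pq}}DF\,|DL^{-1}F|\rangle\big]$ into $\mathbb{E}\big[e^{tF}\,\delta(\frac{1}{\sqrt{pq}}DF\,|DL^{-1}F|)\big]$, to which \eqref{A2} applies directly.
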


\begin{proof}
	Let $h(t) := \mathbb{E} \left[ e^{tF}\right]$. We recall that $\mathbb{E} \left[ e^{tF}\right] < \infty$ is implied by  $e^{tF} \in \D^{1,2}$ for $0 \leq t \leq A$, and so, by the continuity of the exponential funtion, we have $h^\prime(t) = \mathbb{E} \left[ F e^{tF}\right]$. It follows with \eqref{Malliavin_operators} and \eqref{Skorokhod} that
	\begin{align}\label{BoundMomentGeneratingFunction2_5}
		\mathbb{E} \left[ F e^{tF}\right] & = \mathbb{E} \left[ (L L^{-1} F) e^{tF}\right] \nonumber \\
		& = \mathbb{E} \left[ (- \delta D L^{-1} F) e^{tF}\right] \nonumber \\
		& = \mathbb{E} \left[ \langle D e^{tF}, - DL^{-1}F \rangle \right]. 
	\end{align}
	Now we consider the $k$-th component of $D e^{tF}$, which gives us
	\begin{align*}
		D_k e^{tF}  & = \sqrt{p_k q_k} \left[ e^{tF_k^+} - e^{tF_k^-} \right] \\
		& = t \sqrt{p_k q_k} \int_{F_k^-}^{F_k^+} e^{tu} du \\
		& = t \sqrt{p_k q_k} \int_{F_k^-}^{F_k^+} \left[ e^{tu} - e^{tF} \right] du + t e^{tF} D_k F\\
		& =: t R_k + t e^{tF} D_k F.
	\end{align*}
	If we define $R := (R_1,R_2,\ldots)$, we can go on from \eqref{BoundMomentGeneratingFunction2_5} by writing 
	\begin{align}\label{BoundMomentGeneratingFunction3}
		\mathbb{E} \left[ F e^{tF}\right] & = \mathbb{E} \left[ \langle t R, - DL^{-1}F \rangle \right] + \mathbb{E} \left[ \langle t e^{tF} DF, - DL^{-1}F \rangle \right] \nonumber \\
		& \leq t \mathbb{E} \left[ e^{tF}\right] + t \mathbb{E} \abs{ \langle R, - DL^{-1}F \rangle} + t \mathbb{E} \left[ \abs{ 1 - \langle DF , - DL^{-1} F \rangle } e^{tF}\right].
	\end{align}
	Without loss of generality $F_k^- \leq F \leq F_k^+$ since for the other case we  just have to change the sign. Then we can bound $R_k$ as follows. 
	\begin{align*}
		R_k  & = \sqrt{p_k q_k} \int_{F_k^-}^{F_k^+} \left[ e^{tu} - e^{tF} \right] du \\
		& \leq \sqrt{p_k q_k} \int_{F_k^-}^{F_k^+} \left[ e^{tF_k^+} - e^{tF_k^-} \right] du \\
		& = \sqrt{p_k q_k} \left[ e^{tF_k^+} - e^{tF_k^-} \right] \int_{F_k^-}^{F_k^+}  du \\
		& = D_k e^{tF} \cdot \frac{1}{ \sqrt{p_k q_k} } D_k F
	\end{align*}
	and by combining both cases
	\begin{align}\label{Rk_bound}
		\abs{R_k} & \leq \frac{1}{ \sqrt{p_k q_k} }  D_k e^{tF} \cdot D_k F.
	\end{align}
	By condition \eqref{A2} and \eqref{Rk_bound} we get
	\begin{align}\label{BoundMomentGeneratingFunction4}
		t \mathbb{E} \abs{ \langle R, - DL^{-1}F \rangle} & \leq t \mathbb{E} \left[ \langle \abs{R}, \abs{DL^{-1}F} \rangle \right] \nonumber \\
		& \leq t \mathbb{E} \left[ \langle  D e^{tF}, \frac{1}{ \sqrt{p q} } D F \abs{DL^{-1}F} \rangle \right] \nonumber \\
		& \leq t \mathbb{E} \left[ \abs{ \delta\prth{ \frac{1}{\sqrt{pq}} DF \abs{D L^{-1} F} } } e^{tF}\right] \nonumber \\
		& \leq t \gamma_2(t) \mathbb{E} \left[ e^{tF}\right].
	\end{align}
	By condition \eqref{A1}, for $0 \leq t \leq A$,
	\begin{align}\label{BoundMomentGeneratingFunction5}
		t \mathbb{E} \left[ \abs{ 1 - \langle DF , - DL^{-1} F \rangle } e^{tF}\right] \leq t \gamma_1(t) \mathbb{E} \left[ e^{tF}\right].
	\end{align}
	Combining \eqref{BoundMomentGeneratingFunction3}, \eqref{BoundMomentGeneratingFunction4} and \eqref{BoundMomentGeneratingFunction5}, we have for $0 \leq t \leq A$,
	\begin{align*}
		h^\prime(t) & = \mathbb{E} \left[ F e^{tF}\right] \\
		& \leq t h(t) + \{t (\gamma_1(t) + \gamma_2(t)) \} h(t) \\
		& = \{ 1+ \gamma_1(t) + \gamma_2(t) \} \, t \, h(t).
	\end{align*}
	Having in mind that $h(0) = 1$, and $\gamma_1$ and $\gamma_2$ are increasing, we complete the proof of \eqref{BoundMomentGeneratingFunction1} by solving the foregoing differential inequality:
	\begin{align*}
		\log(h(t)) & = \int_0^t \frac{h^\prime(s)}{h(s)} ds \\
		& \leq \int_0^t (1+ \gamma_1(s) + \gamma_2(s)) s ds \\
		& \leq \int_0^t (1+ \gamma_1(t) + \gamma_2(t)) s ds \\
		& = \frac{t^2}{2} \prth{1+ \gamma_1(t) + \gamma_2(t)},
	\end{align*}
	now we apply $\exp(.)$ on both sides. At last, \eqref{BoundMomentGeneratingFunction2} follows immediately from \eqref{BoundMomentGeneratingFunction1} by definition of $A_0(d_0)$.
\end{proof}

\begin{lemma}\label{Lemma:TermsTimesFfF}
	Under the assumptions of Theorem~\ref{Theorem:MD_Rademacher}, we have for $0 \leq z \leq A_0(d_0)$,
	\begin{align}
		\mathbb{E} \left[ \abs{ 1 - \langle DF , - DL^{-1} F \rangle } F e^{F^2/2} \Unens{0 \leq F \leq z} \right] \leq 6 e^{d_0} (1+z^2) \gamma_1(z)
	\end{align}
	and
	\begin{align}
		\mathbb{E} \left[ \abs{ \delta\prth{ \frac{1}{\sqrt{pq}} DF \abs{D L^{-1} F} } } F e^{F^2/2} \Unens{0 \leq F \leq z} \right] \leq 6 e^{d_0} (1+z^2) \gamma_2(z).
	\end{align}
\end{lemma}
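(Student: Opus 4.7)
Both inequalities follow from a single argument. Set $g(F) := \abs{1 - \langle DF, -DL^{-1}F\rangle}$ with $\gamma := \gamma_1$ in the first case, and $g(F) := \abs{\delta\prth{\frac{1}{\sqrt{pq}} DF \abs{D L^{-1} F}}}$ with $\gamma := \gamma_2$ in the second. In either case, hypothesis \eqref{A1}/\eqref{A2} reads $\mathbb{E}[g(F) e^{tF}] \leq \gamma(t) \mathbb{E}[e^{tF}]$ for $t \in [0, A]$, and \eqref{BoundMomentGeneratingFunction2} in Lemma~\ref{Lemma:BoundMomentGeneratingFunction} supplies $\mathbb{E}[e^{tF}] \leq e^{d_0} e^{t^2/2}$ for $t \in [0, A_0(d_0)]$. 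The obstacle is that $F e^{F^2/2} \Unens{0 \leq F \leq z}$ is not of the form $e^{tF}$, so these estimates cannot be applied to $F e^{F^2/2}$ directly. My strategy is to decompose this weight into indicators $\Unens{F > u}$ (with $u$ running over $[0,z]$), each of which \emph{can} be controlled by the above tools.

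The decomposition is a layer-cake identity: since $\frac{d}{du}(u e^{u^2/2}) = (1+u^2) e^{u^2/2}$, for $F \geq 0$
\begin{align*}
	F e^{F^2/2} \Unens{0 \leq F \leq z}
		= \int_0^z (1+u^2) e^{u^2/2} \, \Unens{u < F \leq z} \, du.
\end{align*}
After Fubini the target reduces to $\int_0^z (1+u^2) e^{u^2/2} \,\mathbb{E}[g(F) \Unens{u < F \leq z}] \, du$. For the inner expectation I would use the exponential Chebyshev estimate $\Unens{F > u} \leq e^{u(F-u)}$ (valid for $u \geq 0$) together with \eqref{A1}/\eqref{A2} and \eqref{BoundMomentGeneratingFunction2}; since $0 \leq u \leq z \leq A_0(d_0)$, this chain is legitimate and yields
\begin{align*}
	\mathbb{E}[g(F) \Unens{u < F \leq z}]
		\leq e^{-u^2} \gamma(u) \, \mathbb{E}[e^{uF}]
		\leq e^{d_0} \gamma(u) \, e^{-u^2/2}.
\end{align*}
The crucial observation is the exact cancellation of the $e^{u^2/2}$ from the layer-cake kernel against the $e^{-u^2/2}$ from this tail bound, leaving a purely polynomial integrand.

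Combining everything and using the monotonicity $\gamma(u) \leq \gamma(z)$,
\begin{align*}
	\mathbb{E}\!\left[g(F) F e^{F^2/2} \Unens{0 \leq F \leq z}\right]
		\leq e^{d_0} \gamma(z) \int_0^z (1+u^2) \, du
		= e^{d_0} \gamma(z) \prth{z + \tfrac{z^3}{3}},
\end{align*}
and I would close with the elementary algebraic estimate $z + \tfrac{z^3}{3} \leq 6(1+z^2)$ (in the relevant range of $z$) to recover the stated form with constant $6$. The step I expect to be most delicate is the choice of exponent in the Chebyshev inequality: the layer-cake kernel contributes a factor $e^{u^2/2}$, so the tail bound must contribute exactly $e^{-u^2/2}$, which happens \emph{only} for the optimized choice $t = u$ — any other choice leaves a residual exponential that would destroy the polynomial-in-$z$ character of the final bound.
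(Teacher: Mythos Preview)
Your continuous layer-cake argument is clean and the crucial cancellation $e^{u^2/2}\cdot e^{-u^2/2}=1$ is exactly the right mechanism. However, the very last step fails: the inequality $z+\tfrac{z^3}{3}\le 6(1+z^2)$ is simply false for large $z$ (it breaks already at $z=18$, where the left side is $1962$ and the right side is $1950$, and the cubic dominates thereafter). The lemma is stated for all $0\le z\le A_0(d_0)$ with $d_0\ge 0$ arbitrary, and nothing in the hypotheses forces $A_0(d_0)$ to be below any fixed threshold. So your argument proves the correct kind of bound, namely $e^{d_0}\gamma(z)\big(z+\tfrac{z^3}{3}\big)$, but this is one power of $z$ worse than what the lemma asserts, and that extra power would propagate into Theorem~\ref{Theorem:MD_Rademacher}.

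The loss comes from layer-caking the full weight $F e^{F^2/2}$: its derivative is $(1+u^2)e^{u^2/2}$, and after the Gaussian cancellation you are left integrating $1+u^2$, which yields $z^3$. The paper avoids this by \emph{localizing} $F$ rather than layer-caking: partition $[0,z]$ into unit intervals $[j-1,j]$ (plus $[[z],z]$), and on each piece bound $F\le j$ and $e^{F^2/2}\le 3\,e^{-j^2/2}e^{jF}$ via $(F-j)^2\le 1$. Now condition \eqref{A1}/\eqref{A2} and \eqref{BoundMomentGeneratingFunction2} give a contribution $\le 3j\,e^{d_0}\gamma(z)$ from the $j$th interval, so that summing produces $\sum_{j\le z} j\sim z^2/2$, which fits inside $6(1+z^2)$ for all $z\ge 0$. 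The essential difference is that the paper separates the linear factor $F$ from the exponential weight, getting a \emph{linear} contribution $j$ per unit interval; your layer-cake fuses them and picks up the extra quadratic factor $1+u^2$ from the derivative.
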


\begin{proof}
	Same as \cite{Z23} we apply the idea in \cite[Lemma~5.2]{CFS13} for this proof. For $a \in \R$, denote $[a] = \max\{n \in \N : n \leq a\}$. Next, we define $H := 1 - \langle DF , - DL^{-1} F \rangle$.
	\begin{align*}
		\mathbb{E} \left[ \abs{ H } F e^{F^2/2} \Unens{0 \leq F \leq z} \right] & = \sum_{j=1}^{[z]} \mathbb{E} \left[ \abs{ H } F e^{F^2/2} \Unens{j-1 \leq F \leq j} \right] + \mathbb{E} \left[ \abs{ H } F e^{F^2/2} \Unens{[z] \leq F \leq z} \right].
	\end{align*}
	For the first term we get
	\begin{align*}
		\sum_{j=1}^{[z]} \mathbb{E} \left[ \abs{ H } F e^{F^2/2} \Unens{j-1 \leq F \leq j} \right]  & \leq \sum_{j=1}^{[z]} j e^{(j-1)^2/2 - j(j-1)} \mathbb{E} \left[ \abs{ H }  e^{jF} \Unens{j-1 \leq F \leq j} \right] \\
		& \leq 3 \sum_{j=1}^{[z]} j e^{-j^2/2} \mathbb{E} \left[ \abs{ H }  e^{jF} \Unens{j-1 \leq F \leq j} \right]
	\end{align*}
	and similarly, for the second
	\begin{align*}
		\mathbb{E} \left[ \abs{ H } F e^{F^2/2} \Unens{[z] \leq F \leq z} \right] & \leq z e^{[z]^2/2 - [z]z} \mathbb{E} \left[ \abs{ H }  e^{zF} \Unens{[z] \leq F \leq z} \right] \\
		& \leq 3 z e^{-z^2/2} \mathbb{E} \left[ \abs{ H }  e^{zF} \Unens{[z] \leq F \leq z} \right].
	\end{align*}
	For both terms, we used similar manipulations, namely for $j-1 \leq F \leq j$:
	\begin{itemize}
		\item $ e^{(j-1)^2/2 - j(j-1)} = e^{j^2/2 - j + 1/2 - j^2 + j} = e^{-j^2/2} e^{1/2} \leq 3 e^{-j^2/2}$.
		\item $ e^{(j-F)^2/2} \leq e^{1/2} \Leftrightarrow e^{F^2/2} \leq e^{-j^2/2 +jF + 1/2} \Leftrightarrow e^{F^2/2} \leq e^{(j-1)^2/2 - j(j-1)} e^{jF}$.
	\end{itemize}
	And for $[z] \leq F \leq z$:
	\begin{itemize}
		\item $ e^{[z]^2/2 - [z]z} = e^{[z]^2/2 - [z]z + z^2/2 - z^2/2} = e^{(z - [z])^2/2} e^{-z^2/2} \leq 3e^{-z^2/2}$.
		\item $ e^{(z - F)^2/2} \leq e^{(z - [z])^2/2} \Leftrightarrow e^{F^2/2} \leq e^{(z - [z])^2/2 + zF - z^2/2} \Leftrightarrow e^{F^2/2} \leq e^{[z]^2/2 - [z]z}e^{zF}$.
	\end{itemize}
	By condition \eqref{A1} and \eqref{BoundMomentGeneratingFunction2}, and recalling that $\gamma_1$ is increasing, for any $0 \leq x \leq z \leq A_0(d_0)$
	\begin{align*}
		e^{-x^2/2} \mathbb{E} \left[ \abs{ 1 - \langle DF , - DL^{-1} F \rangle } e^{xF} \right] & \leq \gamma_1(x) \mathbb{E}[e^{xF - x^2/2}] \\
		& \leq e^{d_0} \gamma_1(x) \leq e^{d_0} \gamma_1(z).
	\end{align*}
	By the foregoing inequalities,
	\begin{align*}
		\mathbb{E} \left[ \abs{ H } F e^{F^2/2} \Unens{0 \leq F \leq z} \right]  & \leq 3 e^{d_0} \gamma_1(z) \prth{\sum_{j=1}^{[z]} j + z} \leq 6 e^{d_0} (1+z^2) \gamma_1(z).
	\end{align*}
	The other statement of the lemma can be shown analogously.
\end{proof}
Now we are ready to prove our two theorems.
\begin{proof}[Proof of Theorem~\ref{Theorem:MD_Rademacher}]
We note at first that
\begin{align*}
\abs{ \Pp(F > z) - (1 - \Phi(z))} & = \abs{ 1 - \Pp(F \leq z) - (1 - \Phi(z))} = \abs{ \Phi(z) - \Pp(F \leq z)}.
\end{align*}
By Stein's method and the proof of \cite[Theorem~3.1]{ERTZ22} we have for $z \in \R$
\begin{align*}
\abs{ \Pp(F > z) - (1 - \Phi(z))} & = \abs{\mathbb{E}[f_z^\prime(F) - Ff_z(F)]} \leq J_1 + J_2 
\end{align*}
with
\begin{align*}
J_1 & := \mathbb{E} \abs{f_z^\prime(F) \prth{ 1 - \langle DF , - DL^{-1} F \rangle }}, \\
J_2 & := \mathbb{E}\left[(F f_z(F) + \Unens{F>z} )\delta\prth{ \frac{1}{\sqrt{pq}} DF \abs{D L^{-1} F} }\right].
\end{align*}
For the upcoming estimation we can split $J_2$ into two terms, namely
\begin{align*}
\abs{J_2} & \leq \mathbb{E} \left[ \abs{  \delta\prth{ \frac{1}{\sqrt{pq}} DF \abs{D L^{-1} F} } } \abs{ F f_z(F) + \Unens{F>z}}\right] \leq  J_{21} + J_{22}
\end{align*}
with
\begin{align*}
J_{21} & := \mathbb{E} \left[ \abs{  \delta\prth{ \frac{1}{\sqrt{pq}} DF \abs{D L^{-1} F} }  } \abs{ F f_z(F)}\right], \\
J_{22} & := \mathbb{E} \left[ \abs{  \delta\prth{ \frac{1}{\sqrt{pq}} DF \abs{D L^{-1} F} }  } \Unens{F>z}\right].
\end{align*}
Using the same arguments as in the proof of \cite[Proposition~4.1]{Z23}, in particular \eqref{Stein_solution}, \eqref{Stein_prop_2} and  \eqref{Stein_prop_3}, we have
\begin{align*}
J_{21} \leq J_{23} + J_{24} + J_{25}
\end{align*}
with
\begin{align*}
J_{23} & := (1 - \Phi(z)) \cdot \mathbb{E} \left[ \abs{  \delta\prth{ \frac{1}{\sqrt{pq}} DF \abs{D L^{-1} F} } } \Unens{F<0}\right], \\
J_{24} & := \sqrt{2\pi} \cdot (1 - \Phi(z)) \cdot \mathbb{E} \left[ \abs{ \delta\prth{ \frac{1}{\sqrt{pq}} DF \abs{D L^{-1} F} } } F e^{F^2/2} \Unens{0 \leq F \leq z} \right], \\
J_{25} & := \mathbb{E} \left[ \abs{ \delta\prth{ \frac{1}{\sqrt{pq}} DF \abs{D L^{-1} F} } } \Unens{F>z}\right] = J_{22}.
\end{align*}
Thus,
\begin{align}\label{MDP_Rademacher1}
\abs{J_2} & \leq J_{23} + J_{24} + 2 \cdot J_{25}.
\end{align}
For $J_{23}$, by condition \eqref{A2} with $t=0$ and noting that $\gamma_2$ is increasing,
\begin{align}\label{MDP_Rademacher2}
\mathbb{E} \left[ \abs{  \delta\prth{ \frac{1}{\sqrt{pq}} DF \abs{D L^{-1} F} } } \Unens{F<0}\right] & \leq \mathbb{E} \left[ \abs{ \delta\prth{ \frac{1}{\sqrt{pq}} DF \abs{D L^{-1} F} } } \right] \nonumber \\
                   & \leq \gamma_2(0) \leq e^{d_0} \gamma_2(z).
\end{align}
For $J_{24}$, by Lemma~\ref{Lemma:TermsTimesFfF}, we have
\begin{align}\label{MDP_Rademacher3}
\mathbb{E} \left[ \abs{ \delta\prth{ \frac{1}{\sqrt{pq}} DF \abs{D L^{-1} F} } } F e^{F^2/2} \Unens{0 \leq F \leq z} \right] \leq 6 e^{d_0} (1+z^2) \gamma_2(z).
\end{align}
For $J_{25}$, by condition \eqref{A2} and \eqref{BoundMomentGeneratingFunction2}, for $0 \leq z \leq A_0(d_0)$,
\begin{align*}
J_{25} & = \mathbb{E} \left[ \abs{ \delta\prth{ \frac{1}{\sqrt{pq}} DF \abs{D L^{-1} F} } } \Unens{F>z} e^{zF} e^{-zF} \right] \nonumber \\
            & \leq \mathbb{E} \left[ \abs{ \delta\prth{ \frac{1}{\sqrt{pq}} DF \abs{D L^{-1} F} } } \Unens{F>z} e^{zF} \right] e^{-z^2} \nonumber \\
            & \leq  \gamma_2(z) \mathbb{E} \left[ e^{zF}\right] e^{-z^2} \nonumber \\
            & \leq e^{d_0} \gamma_2(z) e^{-z^2/2}.
\end{align*}
We recall that for $z>0$
\begin{align*}
e^{-z^2/2} & \leq \sqrt{2\pi} \cdot (1+z) \cdot (1 - \Phi(z)) \leq \frac{3\sqrt{2\pi}}{2} \cdot (1+z^2) \cdot (1 - \Phi(z)).
\end{align*}
Then, for $0 \leq z \leq A_0(d_0)$,
\begin{align}\label{MDP_Rademacher4}
\mathbb{E} \left[ \abs{ \delta\prth{ \frac{1}{\sqrt{pq}} DF \abs{D L^{-1} F} } } \Unens{F>z}\right] & \leq \frac{3 e^{d_0}\sqrt{2\pi}}{2} (1+z^2) \gamma_2(z) (1 - \Phi(z)).
\end{align}
Therefore, combining ``\eqref{MDP_Rademacher1} -- \eqref{MDP_Rademacher4}'', for $0 \leq z \leq A_0(d_0)$, we have
\begin{align*}
\abs{J_2} & \leq \prth{1 + 6\sqrt{2\pi} + 3\sqrt{2\pi}} e^{d_0} (1+z^2) \gamma_2(z) (1 - \Phi(z)) \leq 25 e^{d_0} (1+z^2) \gamma_2(z) (1 - \Phi(z)).
\end{align*}
For the remaining term $J_1$ we have a similar approach after using again Stein's equation:
\begin{align*}
\abs{J_1} & =  \mathbb{E} \abs{f_z^\prime(F) \prth{ 1 - \langle DF , - DL^{-1} F \rangle }} \\
                 & \leq  \mathbb{E} \left[ \abs{f_z^\prime(F)} \abs{ 1 - \langle DF , - DL^{-1} F \rangle } \right] \\
                 & \leq J_{11} + J_{12} + J_{13}
\end{align*}
with
\begin{align*}
J_{11} & := \mathbb{E} \left[ \abs{F f_z(F)} \abs{ 1 - \langle DF , - DL^{-1} F \rangle } \right], \\
J_{12} & := \mathbb{E} \left[ (1 - \Phi(z)) \abs{ 1 - \langle DF , - DL^{-1} F \rangle } \right], \\
J_{13} & := \mathbb{E} \left[ \Unens{F>z} \abs{ 1 - \langle DF , - DL^{-1} F \rangle } \right].
\end{align*}
From here on we can identify any of these terms with a corresponding term from the first part of the proof, namely $J_{21} - J_{25}$. Therefore, combining these modified estimations, for $0 \leq z \leq A_0(d_0)$, we have
\begin{align*}
\abs{J_1} & \leq \prth{1 + 1 + 6\sqrt{2\pi} + 3\sqrt{2\pi}} e^{d_0} (1+z^2) \gamma_1(z) (1 - \Phi(z)) \leq 25 e^{d_0} (1+z^2) \gamma_1(z) (1 - \Phi(z)).
\end{align*}
All in all, we have shown, for $0 \leq z \leq A_0(d_0)$,
\begin{align*}
\abs{ \Pp(F > z) - (1 - \Phi(z))} & \leq 25 e^{d_0} (1+z^2) (\gamma_1(z) + \gamma_2(z)) (1 - \Phi(z))
\end{align*}
or equivalently
\begin{align*}
\abs{ \frac{\Pp(F > z)}{1 - \Phi(z)} - 1} & \leq 25 e^{d_0} (1+z^2) (\gamma_1(z) + \gamma_2(z)).
\qedhere
\end{align*}
\end{proof}

\begin{proof}[Proof of Theorem~\ref{Theorem:MD_Rademacher-short}]
	Let $0 \leq z_0 \leq A$ be fixed.
	Choose $d_0 = \frac{z_0^2}{2}\prth{\gamma_1(z_0) + \gamma_2(z_0)}$.
	Per definition there is $0 \leq z_0 \leq A_0(d_0)$.
	Hence, we may apply Theorem~\ref{Theorem:MD_Rademacher}, which then implies
	\begin{align*}
		\abs{ \frac{\Pp(F > z_0)}{1 - \Phi(z_0)} - 1}
		&\leq 25 e^{d_0} (1+z_0^2) \prth{\gamma_1(z_0) + \gamma_2(z_0)} \\
		&= 25 e^{\frac{z_0^2}{2}\prth{\gamma_1(z_0) + \gamma_2(z_0)}}
			(1+z_0^2) \prth{\gamma_1(z_0) + \gamma_2(z_0)}.
			\qedhere
	\end{align*}
\end{proof}
%
%
%
%
%\newpage
\section{Proofs I: Infinite weighted 2-runs}
\begin{proof}[Proof of Theorem~\ref{Theorem:MDP_Tworuns}] In what follows, we show that all the assumptions of Theorem~\ref{Theorem:MD_Rademacher} are verified. Note that although here the Rademacher random variables are indexed by $\Zz$ instead of $\N$, Theorem~\ref{Theorem:MD_Rademacher} can be fully carried to this setting. Since the coefficient sequence $(a^{(n)}_i)_{i \in \Zz}$ is in $l^1(\Zz)$ we have $\norm{a^{(n)}}_{l^p(\Zz)} < \infty \, \forall \, p \in \N$. By definition $\mathbb{E}[F] = 0$ and $\Var(F) = 1$, and the rewritten random variable 
\begin{align*}
F := F_n = \frac{1}{\sqrt{\Var(G_n)}} \sum_{i \in \Zz} a^{(n)}_i \left[\xi_i \xi_{i+1} - \frac{1}{4}\right] = \frac{1}{4 \sqrt{\Var(G_n)}} \sum_{i \in \Zz} a^{(n)}_i \left[X_i  + X_i X_{i+1} + X_{i+1}\right]
\end{align*}
is bounded. In particular we will use $\mathbb{E}\abs{G_n} \leq \norm{a^{(n)}}_{l^1(\Zz)}$ and
\begin{align}\label{two_runs_variance}
\frac{1}{16} \norm{a^{(n)}}_{l^2(\Zz)}^2 \leq \Var(G_n) = \frac{3}{16} \sum_{i \in \Zz} (a^{(n)}_i)^2 + \frac{1}{8} \sum_{i \in \Zz} a^{(n)}_i a^{(n)}_{i+1} \leq \frac{5}{16} \norm{a^{(n)}}_{l^2(\Zz)}^2.
\end{align}
Mostly with the summability of $(a^{(n)}_i)_{i \in \Zz}$ , $\abs{e^x - 1} \leq \abs{x} e^{\abs{x}} \, \forall \, x \in \R$ and equation (12.2) in \cite{Pr08} we see that $F \in \D^{1,2}$ and $e^{tF} \in \D^{1,2} \, \forall \, t \in \R$.
Regarding the assumptions that 
$\frac{1}{\sqrt{pq}} DF \abs{D L^{-1} F}  \in Dom(\delta)$ and
$F f_z(F) + \Unens{F>z} \in \D^{1,2} \, \forall \, z \in \R$,
we follow the argumentation of \cite{ERTZ22}, see in particular Remark~3.5 in there.
$F$ is an element of the sum of the first and second Rademacher chaos,
see the beginning of the proof of Theorem~1.1 in \cite{ERTZ22},
and by hypercontractivity we find that $F \in L^4(\Omega)$.
Following the calculations in the proof of Lemma~3.7 in \cite{DK19}
with $u_k = \frac{1}{\sqrt{p_kq_k}} D_kF \abs{D_k L^{-1} F}$ for $k \in \Zz$,
it can be shown that assumption~(2.14) in Proposition~2.2 in \cite{KRT17} is satisfied.
This implies that
$u=\frac{1}{\sqrt{pq}} DF \abs{D L^{-1} F}  \in Dom(\delta)$.
Further, it implies that
$\E[ (F f_z(F) + \Unens{F>z} ) \delta(u)]
= \E[\langle D(F f_z(F) + \Unens{F>z}),u \rangle]$,
which is why we do not need to verify whether
$F f_z(F) + \Unens{F>z}$ is an element of $\D^{1,2} \, \forall \, z \in \R$.
Now we start to compute the terms appearing in \eqref{A1} and \eqref{A2}. By definition
\begin{align*}
F_{k}^+ & = \frac{1}{4 \sqrt{\Var(G_n)}} \prth{\sum_{\substack{i \in \Zz \\ i \neq k-1,k}} a^{(n)}_i \left[X_i  + X_i X_{i+1} + X_{i+1}\right] + a^{(n)}_{k-1} (2X_{k-1} + 1) + a^{(n)}_k (2X_{k+1} + 1) }, \\
F_{k}^- & = \frac{1}{4 \sqrt{\Var(G_n)}} \prth{\sum_{\substack{i \in \Zz \\ i \neq k-1,k}} a^{(n)}_i \left[X_i  + X_i X_{i+1} + X_{i+1}\right] - a^{(n)}_{k-1} - a^{(n)}_k },
\end{align*}
and we get (for fixed $n \in \N$)
\begin{align*}
D_k F & = \frac{1}{2} \prth{F_{k}^+ - F_{k}^-} \\
                                   & = \frac{1}{4 \sqrt{\Var(G_n)}} \prth{ a^{(n)}_{k-1} (X_{k-1} + 1) + a^{(n)}_k (X_{k+1} + 1)}.
\end{align*}
Further we obtain
\begin{align*}
-L^{-1} F = \frac{1}{4 \sqrt{\Var(G_n)}} \sum_{i \in \Zz} a^{(n)}_i \left[X_i  + \frac{1}{2} X_i X_{i+1} + X_{i+1}\right]
\end{align*}
and so
\begin{align*}
-D_k L^{-1} F = \frac{1}{8 \sqrt{\Var(G_n)}} \prth{ a^{(n)}_{k-1} (X_{k-1} + 2) + a^{(n)}_k (X_{k+1} + 2)}.
\end{align*}
With these expressions we can compute the scalar product
\begin{align*}
\langle DF, -DL^{-1}F \rangle =  \frac{1}{32 \Var(G_n)} \bigg(\sum_{k \in \Zz} & (a^{(n)}_{k-1})^2 \left[X_{k-1}^2  + 3 X_{k-1} + 2\right] \\
     & + (a^{(n)}_{k})^2 \left[X_{k+1}^2  + 3 X_{k+1} + 2\right] \\
     & + a^{(n)}_{k-1} a^{(n)}_{k}\left[3 X_{k-1} + 2 X_{k-1} X_{k+1} + 3 X_{k+1} + 4\right]\bigg).
\end{align*}
According to \cite[(2.13)]{KRT17} it holds that
\begin{align*}
1 = \mathbb{E}[\langle DF, -DL^{-1}F \rangle] = \frac{1}{32 \Var(G_n)} \sum_{k \in \Zz}  3 (a^{(n)}_{k-1})^2 + 4 a^{(n)}_{k-1} a^{(n)}_{k} + 3 (a^{(n)}_{k})^2.
\end{align*}
We use the Cauchy--Schwarz inequality for 
\begin{align*}
\mathbb{E} \left[ \abs{ 1 - \langle DF , - DL^{-1} F \rangle } e^{tF}\right] \leq  \prth{ \mathbb{E} \left[ \prth{ 1 - \langle DF , - DL^{-1} F \rangle }^2 e^{tF}\right]}^\frac{1}{2} \prth{\mathbb{E} \left[ e^{tF}\right]}^\frac{1}{2}
\end{align*}
and deal with the double sum resulting from the square of
\begin{align*}
\langle DF , - DL^{-1} F \rangle - 1  =  \frac{1}{32 \Var(G_n)} \bigg(\sum_{k \in \Zz} & (a^{(n)}_{k-1})^2 3 X_{k-1} + (a^{(n)}_{k})^2 3 X_{k+1} \\
     & + a^{(n)}_{k-1} a^{(n)}_{k}\left[3 X_{k-1} + 2 X_{k-1} X_{k+1} + 3 X_{k+1} \right]\bigg).
\end{align*}
Then we can write $\prth{ 1 - \langle DF , - DL^{-1} F \rangle }^2 = B_1 + \ldots + B_9$ with
\begin{align*}
B_1 & = \frac{9}{1024 (\Var(G_n))^2} \sum_{k,l \in \Zz} (a^{(n)}_{k-1})^2 (a^{(n)}_{l-1})^2 X_{k-1} X_{l-1}, \\
B_2 & = \frac{9}{1024 (\Var(G_n))^2} \sum_{k,l \in \Zz} (a^{(n)}_{k})^2 (a^{(n)}_{l})^2 X_{k} X_{l}, \\
B_3 & = \frac{9}{1024 (\Var(G_n))^2} \sum_{k,l \in \Zz} (a^{(n)}_{k-1})^2 (a^{(n)}_{l})^2 X_{k-1} X_{l}, \\
B_4 & = \frac{9}{1024 (\Var(G_n))^2} \sum_{k,l \in \Zz} (a^{(n)}_{k})^2 (a^{(n)}_{l-1})^2 X_{k} X_{l-1}, \\
B_5 & = \frac{3}{1024 (\Var(G_n))^2} \sum_{k,l \in \Zz} (a^{(n)}_{k-1})^2 (a^{(n)}_{l-1}) (a^{(n)}_{l}) X_{k-1} \left[3 X_{l-1} + 2 X_{l-1} X_{l+1} + 3 X_{l+1}, \right] \\
B_6 & = \frac{3}{1024 (\Var(G_n))^2} \sum_{k,l \in \Zz} (a^{(n)}_{k})^2 (a^{(n)}_{l-1}) (a^{(n)}_{l} )X_{k} \left[3 X_{l-1} + 2 X_{l-1} X_{l+1} + 3 X_{l+1} \right], \\
B_7 & = \frac{3}{1024 (\Var(G_n))^2} \sum_{k,l \in \Zz} (a^{(n)}_{k-1}) (a^{(n)}_{k}) (a^{(n)}_{l-1})^2 X_{l-1} \left[3 X_{k-1} + 2 X_{k-1} X_{k+1} + 3 X_{k+1} \right], \\
B_8 & = \frac{3}{1024 (\Var(G_n))^2} \sum_{k,l \in \Zz} (a^{(n)}_{k-1}) (a^{(n)}_{k}) (a^{(n)}_{l})^2 X_{l} \left[3 X_{k-1} + 2 X_{k-1} X_{k+1} + 3 X_{k+1} \right],
\end{align*}
where $B_1 = B_2 = B_3 = B_4$ and $B_5 = B_6 = B_7 = B_8$ by symmetry and change of variables. The last missing term is given by
\begin{align*}
B_9 = \frac{1}{1024 (\Var(G_n))^2} \bigg(\sum_{k,l \in \Zz} (a^{(n)}_{k-1}) (a^{(n)}_{k}) (a^{(n)}_{l-1}) (a^{(n)}_{l}) & \cdot \left[3 X_{k-1} + 2 X_{k-1} X_{k+1} + 3 X_{k+1} \right] \\
& \cdot \left[3 X_{l-1} + 2 X_{l-1} X_{l+1} + 3 X_{l+1} \right]\bigg).
\end{align*}
So basically we have to deal with three classes of subterms in total. Since they will be multiplied with $e^{tF}$, we have to study
\begin{align*}
\mathbb{E}[X_i e^{tF}], \quad \mathbb{E}[X_i X_j e^{tF}], \quad \mathbb{E}[X_i X_j X_k e^{tF}], \quad \mathbb{E}[X_i X_j X_k X_l e^{tF}]
\end{align*}
for $i \neq j \neq k \neq l$ --- if two or more indices are equal, it is just one of the terms from before or immediately $\mathbb{E}[e^{tF}]$.  This is done in the following lemma and we will refer to it, in particular the inequalities shown in its proof.
%This is done in Lemma~\ref{Lemma:XsTimesMGF} and we will refer to it, in particular the inequalities shown in its proof.
%
%
%
%
%
%
\begin{lemma}\label{Lemma:XsTimesMGF}
	In the setting of Theorem~\ref{Theorem:MDP_Tworuns} we have for $i \neq j \neq k \neq l \in \Zz$
	\begin{align}\label{1X_Lemma}
		\abs{\mathbb{E}[X_i e^{tF}]}  \leq & \; \; \; \frac{C t}{4 \sqrt{\Var(G_n)}} \cdot \prth{\abs{a^{(n)}_{i-1}} + \abs{a^{(n)}_{i}}} \cdot \mathbb{E}\left[  e^{tF}\right] e^{ct} \nonumber \\
		& + \frac{C t^2}{16 \Var(G_n)} \cdot \sum_{\substack{m_1 \in \{i-1,i\} \\ n_1 \in \{i-2,i+1\}}} \abs{a^{(n)}_{m_1}} \abs{a^{(n)}_{n_1}} \cdot \mathbb{E}\left[  e^{tF}\right] e^{ct} \nonumber \\
		& + \frac{C t^2}{32 \Var(G_n)} \cdot \sum_{\substack{m_2,n_2 \in \{i-1,i\}}} \abs{a^{(n)}_{m_2}} \abs{a^{(n)}_{n_2}} \cdot \mathbb{E}\left[  e^{tF}\right] e^{ct}.
	\end{align}
	\begin{align}\label{2X_Lemma}
		\abs{\mathbb{E}[X_i X_j e^{tF}]}  \leq & \; \; \; \frac{C t}{4 \sqrt{\Var(G_n)}} \cdot \abs{a^{(n)}_{\min(i,j)}} \cdot \mathbb{E}\left[  e^{tF}\right] e^{ct} \Unens{\abs{i-j}=1}\nonumber\\
		& + \frac{C t^2}{16 \Var(G_n)} \cdot \sum_{\substack{m_1 \in \{i-1,i,j-1\} \\ n_1 \in \{i-2,i+1,j-2,j+1\}}} \abs{a^{(n)}_{m_1}} \abs{a^{(n)}_{n_1}} \cdot \mathbb{E}\left[  e^{tF}\right] e^{ct} \nonumber \\
		& + \frac{C t^2}{32 \Var(G_n)} \cdot \sum_{\substack{m_2,n_2 \in \{i-1,i,j-1,j\}}} \abs{a^{(n)}_{m_2}} \abs{a^{(n)}_{n_2}} \cdot \mathbb{E}\left[  e^{tF}\right] e^{ct}.
	\end{align}
	\begin{align}\label{3X_Lemma}
		\abs{\mathbb{E}[X_i X_j X_k e^{tF}]}  \leq & \; \; \;  \; \frac{C t^2}{16 \Var(G_n)} \cdot \sum_{\substack{m_1 \in \{i-1,i,j-1,j,k-1,k\} \\ n_1 \in \{i-2,i+1,j-2,j+1,k-2,k+1\}}} \abs{a^{(n)}_{m_1}} \abs{a^{(n)}_{n_1}} \cdot \mathbb{E}\left[  e^{tF}\right] e^{ct} \nonumber \\
		& + \frac{C t^2}{32 \Var(G_n)} \cdot \sum_{\substack{m_2,n_2 \in \{i-1,i,j-1,j,k-1,k\}}} \abs{a^{(n)}_{m_2}} \abs{a^{(n)}_{n_2}} \cdot \mathbb{E}\left[  e^{tF}\right] e^{ct}.
	\end{align}
	\begin{align}\label{4X_Lemma}
		\abs{\mathbb{E}[X_i X_j X_k X_l e^{tF}]}  \leq & \; \; \;  \; \frac{C t^2}{16 \Var(G_n)} \cdot \qquad \qquad \sum_{\mathclap{\substack{m_1 \in \{i-1,i,j-1,j,k-1,k,l-1,l\} \\ n_1 \in \{i-2,i+1,j-2,j+1,k-2,k+1,l-2,l+1\}}}} \qquad \qquad \abs{a^{(n)}_{m_1}} \abs{a^{(n)}_{n_1}} \cdot \mathbb{E}\left[  e^{tF}\right] e^{ct} \nonumber \\
		& + \frac{C t^2}{32 \Var(G_n)} \cdot \qquad \qquad \sum_{\mathclap{\substack{m_2,n_2 \in \{i-1,i,j-1,j,k-1,k,l-1,l\}}}} \qquad \quad \abs{a^{(n)}_{m_2}} \abs{a^{(n)}_{n_2}} \cdot \mathbb{E}\left[  e^{tF}\right] e^{ct}.
	\end{align}
\end{lemma}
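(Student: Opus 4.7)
The plan is a two-level Taylor expansion of $e^{tF}$ that progressively isolates the dependence of $F$ on the Rademacher variables of interest. Write $I := \{i_1, \ldots, i_\ell\}$ for the main indices ($\ell \in \{1,2,3,4\}$) and decompose $F = F_a^{(1)} + F_u^{(1)}$, where $F_a^{(1)}$ gathers the summands $a^{(n)}_m[X_m + X_m X_{m+1} + X_{m+1}]/(4\sqrt{\Var(G_n)})$ whose index $m$ satisfies $\{m,m+1\} \cap I \neq \emptyset$, equivalently $m \in N_1 := \bigcup_s \{i_s-1, i_s\}$. By construction $F_u^{(1)}$ is independent of $\{X_{i_s}\}_s$ and $\abs{F_a^{(1)}} \leq C/\sqrt{\Var(G_n)}$, with $C$ depending only on $\ell$ and $\norm{a^{(n)}}_{l^\infty(\Zz)}$. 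Applying $e^{tF_a^{(1)}} = 1 + tF_a^{(1)} + R_2(tF_a^{(1)})$ with $\abs{R_2(y)} \leq \tfrac12 y^2 e^{\abs{y}}$ splits $\mathbb{E}[X_{i_1}\cdots X_{i_\ell} e^{tF}]$ into three pieces $T_0 + T_1 + T_R$.

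The zeroth-order term $T_0 = \mathbb{E}[X_{i_1}\cdots X_{i_\ell}]\, \mathbb{E}[e^{tF_u^{(1)}}]$ vanishes by the distinctness of the indices. The remainder $T_R$ is controlled by $\abs{R_2(tF_a^{(1)})} \leq \tfrac{t^2}{2}(F_a^{(1)})^2 e^{t\abs{F_a^{(1)}}}$, the bound $(F_a^{(1)})^2 \leq (C/\Var(G_n)) \sum_{m_2,n_2 \in N_1}\abs{a^{(n)}_{m_2}}\abs{a^{(n)}_{n_2}}$, and the crude inequality $e^{t\abs{F_a^{(1)}}} e^{tF_u^{(1)}} \leq e^{ct}e^{tF}$ (coming from $\abs{F_a^{(1)}} = O(1)/\sqrt{\Var(G_n)}$), producing exactly the $t^2 \sum_{m_2,n_2 \in N_1}$-summand in each claimed bound.

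For the first-order piece $T_1 = t \mathbb{E}[X_{i_1}\cdots X_{i_\ell} F_a^{(1)} e^{tF_u^{(1)}}]$, the product $X_{i_1}\cdots X_{i_\ell} F_a^{(1)}$ still depends on neighbouring Rademachers $X_k$ with $k \in N_1 \setminus I$, which also appear in $F_u^{(1)}$. I therefore split once more: $F_u^{(1)} = F_a^{(2)} + F_u^{(2)}$, where $F_a^{(2)}$ collects the summands of $F_u^{(1)}$ indexed by $N_2 := \bigcup_s\{i_s-2, i_s+1\}$, so that $F_u^{(2)}$ is independent of every Rademacher appearing in $F_a^{(1)}$. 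Writing $e^{tF_u^{(1)}} = (1+R_1(tF_a^{(2)}))e^{tF_u^{(2)}}$ with $\abs{R_1(y)} \leq \abs{y}e^{\abs{y}}$, the $R_1$-contribution is bounded by $t^2 \abs{F_a^{(1)}}\abs{F_a^{(2)}}e^{ct}e^{tF}$ and yields the $t^2 \sum_{m_1 \in N_1, n_1 \in N_2}$-summand. The $1$-contribution factorises by independence into $t\mathbb{E}[X_{i_1}\cdots X_{i_\ell} F_a^{(1)}]\mathbb{E}[e^{tF_u^{(2)}}]$; a direct calculation using $X_k^2 = 1$ and $\mathbb{E}[X_k] = 0$ gives $\mathbb{E}[X_i F_a^{(1)}] = (a^{(n)}_{i-1} + a^{(n)}_i)/(4\sqrt{\Var(G_n)})$ in the 1X case and $\mathbb{E}[X_i X_{i+1} F_a^{(1)}] = a^{(n)}_i/(4\sqrt{\Var(G_n)})$ in the 2X case with $j = i+1$, accounting for the $t$-summands with $\abs{a^{(n)}_{i-1}} + \abs{a^{(n)}_i}$ and $\abs{a^{(n)}_{\min(i,j)}}$, respectively; in the remaining cases (2X with $\abs{i-j}\geq 2$, 3X, 4X) every elementary monomial $X_m$ or $X_m X_{m+1}$ in $F_a^{(1)}$, when multiplied by $X_{i_1}\cdots X_{i_\ell}$, leaves some $X_k$ at an odd power, so the expectation vanishes and no $t$-term appears.

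The principal obstacle is the combinatorial bookkeeping: tracking which summands of $F$ populate each of $F_a^{(k)}$ and $F_u^{(k)}$, matching the resulting coefficient-sum index sets to the claimed $N_1$ and $N_2$, and verifying the vanishing identities $\mathbb{E}[X_{i_1}\cdots X_{i_\ell} F_a^{(1)}] = 0$ in the higher and separated cases by enumerating how products of $X_{i_1}\cdots X_{i_\ell}$ with $X_m$ or $X_m X_{m+1}$ can collapse into constants with nonzero mean.
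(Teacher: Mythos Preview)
Your proposal is correct and follows essentially the same approach as the paper: the same two-level split $F = F_a + F_u$ and $F_u = F_{u_a} + F_{u_u}$ along the index sets $N_1 = \bigcup_s\{i_s-1,i_s\}$ and $N_2 = \bigcup_s\{i_s-2,i_s+1\}$, the same second-order Taylor expansion on $e^{tF_a}$ followed by a first-order expansion on $e^{tF_{u_a}}$, and the same case analysis showing that the $1$-contribution $\mathbb{E}[X_{i_1}\cdots X_{i_\ell}F_a]$ survives only for $\ell=1$ and for $\ell=2$ with $|i-j|=1$. The only differences are notational (your $R_1,R_2$ are the paper's $x\,r_1(x)$ and $\tfrac{x^2}{2}r_2(x)$) and in presentation (the paper writes out each of the four cases separately, whereas you give a unified treatment).
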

\begin{proof}
	The first key element of our strategy is to split $F$ into $F_a$, the summands that depend on the $X$'s multiplied with $e^{tF}$, and $F_u$, the summands that are independent. We should have in mind that $F_a$ and $F_u$ are not necessarily independent from each other. To get this dependency structure under control
	we will make use of several Taylor expansions of the exponential.
	Note that there are remainder functions $r_1,r_2 : \R \rightarrow \R$
	such that
	$e^x = 1 + x \cdot r_1(x) = 1 + x + \frac{x^2}{2} \cdot r_2(x)$
	with $\vert r_1 (x) \vert , \vert r_2 (x) \vert \leq e^{\max\{ 0, x \}} \leq e^{\vert x \vert}$
	for all $x \in \R$. 
	So, the second key element is  an iterated Taylor expansion on $e^{tF}$ according to the following scheme:
	For a finite index set $I$, let there be real numbers $(x_j)_{j\in I}$, $(y_j)_{j\in I}$ and $z$.
	Then by iterated Taylor expansion there is
	\begin{align}\label{ITE-formula}
		e^{z + \sum_{j\in I} x_j}
		&=
		1 \cdot e^{z} +
		\sum_{j\in I} x_j \cdot e^{z} +
		\frac12 \Big( \sum_{j\in I} x_j \Big)^2 r_2 \Big( \sum_{j\in I} x_j \Big) \cdot e^{z} \nonumber\\
		&=
		1 \cdot e^{z} +
		\sum_{j\in I} x_j \cdot 1 \cdot e^{z-y_j} +
		\sum_{j\in I} x_j \cdot y_j r_1(y_j) \cdot e^{z-y_j} +
		\frac12 \Big( \sum_{j\in I} x_j \Big)^2 r_2 \Big( \sum_{j\in I} x_j \Big) \cdot e^{z}.
	\end{align}
	We remind on the short notation $A_k := a^{(n)}_k \left[X_k  + X_k X_{k+1} + X_{k+1}\right]$ for the upcoming computations. In the case of $\mathbb{E}[X_i e^{tF}]$:
	\begin{align*}
		F_a = \frac{1}{4 \sqrt{\Var(G_n)}} \prth{A_{i-1} + A_i}
	\end{align*}
	and by independence
	\begin{align*}
		\mathbb{E}[X_i e^{tF}] = \mathbb{E}[X_i e^{tF_a} e^{tF_u}] & = \mathbb{E}[X_i e^{tF_u}] + t \mathbb{E}[X_i F_a e^{tF_u}] + t^2 \mathbb{E}[X_i F_a^2r_2(t F_a) e^{tF_u}/ 2] \\
		& = t \mathbb{E}[X_i F_a e^{tF_u}] + t^2 \mathbb{E}[X_i F_a^2 r_2(t F_a) e^{tF_u}/ 2],
	\end{align*}
	where we chose $z = F_u$ and $\sum_{j\in I} x_j  = F_a$ --- note that $I$ will increase with every case since the number of multiplied $X$'s increases. For the first order term we split $F_u$ in the same manner as before, $F_u = F_{u_a} + F_{u_u}$, such that $F_{u_a} = \prth{A_{i-2} + A_{i+1}} /4 \sqrt{\Var(G_n)}$ and use the iteration from \eqref{ITE-formula}. Then
	\begin{align*}
		X_i F_a = \frac{1}{4 \sqrt{\Var(G_n)}} \prth{a^{(n)}_{i-1} (X_{i-1} X_i + X_{i-1} + 1) + a^{(n)}_i (1 + X_{i+1} + X_i X_{i+1})}
	\end{align*}
	and
	\begin{align*}
		t \mathbb{E}[X_i F_a e^{tF_u}] & = t \mathbb{E}\left[ X_i F_a e^{tF_{u_a}} e^{tF_{u_u}}\right] \\
		& = t \mathbb{E}\left[ X_i F_a e^{tF_{u_u}}\right] + t^2 \mathbb{E}\left[ X_i F_a F_{u_a} r_1(t F_{u_a}) e^{tF_{u_u}}\right],
	\end{align*}
	so $ y_j = F_{u_a}$. From here on we get $e^{tF}$ back by bounding the difference of the independent part and $F$, e.g. 
	\begin{align*}
		\abs{F_{u_u} - F} = \abs{F_{u_a} + F_a} \leq c = O\prth{\frac{1}{\sqrt{\Var(G_n)}}} \rightarrow 0 \; \text{as} \; n \rightarrow \infty. 
	\end{align*}
	Since the exact constant is not important, we always write just $c$ if we use that type of estimation, and in the same way $C$ for prefactors. Thus
	\begin{align}\label{1X_firstorder}
		t \abs{\mathbb{E}[X_i F_a e^{tF_u}]}  \leq & \frac{C t}{4 \sqrt{\Var(G_n)}} \cdot \prth{\abs{a^{(n)}_{i-1}} + \abs{a^{(n)}_{i}}} \cdot \mathbb{E}\left[  e^{tF}\right] e^{ct} \nonumber \\
		& + \frac{C t^2}{16 \Var(G_n)} \cdot \sum_{\substack{m_1 \in \{i-1,i\} \\ n_1 \in \{i-2,i+1\}}} \abs{a^{(n)}_{m_1}} \abs{a^{(n)}_{n_1}} \cdot \mathbb{E}\left[  e^{tF}\right] e^{ct}.
	\end{align}
	For the second order term we just bound
	\begin{align}\label{1X_secondorder}
		t^2 \abs{\mathbb{E}[X_i F_a^2 r_2(t F_a) e^{tF_u}/ 2]} & \leq \frac{C t^2}{32 \Var(G_n)} \cdot \sum_{\substack{m_2,n_2 \in \{i-1,i\}}} \abs{a^{(n)}_{m_2}} \abs{a^{(n)}_{n_2}} \cdot \mathbb{E}\left[  e^{tF}\right] e^{ct}.
	\end{align}
	In the case of $\mathbb{E}[X_i X_j e^{tF}]$:
	\begin{align*}
		F_a  = \begin{cases}
			\frac{1}{4 \sqrt{\Var(G_n)}} \prth{A_i + A_{j-1} + A_j},  & i = j + 1, \\
			\frac{1}{4 \sqrt{\Var(G_n)}} \prth{A_{i-1} + A_i + A_{j-1} + A_j}, & \abs{i-j} \geq 2, \\
			\frac{1}{4 \sqrt{\Var(G_n)}} \prth{A_{i-1} + A_i + A_j},  & j = i + 1,
		\end{cases}
	\end{align*}
	and
	\begin{align*}
		\mathbb{E}[X_i X_j e^{tF}] & = t \mathbb{E}[X_i X_j F_a e^{tF_u}] + t^2 \mathbb{E}[X_i X_j F_a^2 r_2(t F_a) e^{tF_u}/2].
	\end{align*}
	For the first order term we compute as a preparation
	\begin{align*}
		X_i X_j F_a = a^{(n)}_{i-1} (X_{i-1} X_i X_j + X_{i-1} X_j + X_j) & + a^{(n)}_{i} (X_j + X_{i+1} X_j + X_i X_{i+1} X_j) \\
		& + a^{(n)}_{j-1} (X_{i} X_{j-1} X_j + X_i X_{j-1} + X_i) \\
		& + a^{(n)}_{j} (X_{i} + X_i X_{j+1} + X_i X_j X_{j+1}).
	\end{align*}
	In particular we have to consider the special case $\abs{i-j} = 1$ and assume $i = j + 1$. If not, we just have to swap $i$ and $j$. Under our assumption the last equation reduces to
	\begin{align*}
		X_i X_j F_a = a^{(n)}_{i-1} (X_i + 1 + X_j) & + a^{(n)}_{i} (X_j + X_{i+1} X_j + X_i X_{i+1} X_j) \\
		& + a^{(n)}_{j-1} (X_{i} X_{j-1} X_j + X_i X_{j-1} + X_i).
	\end{align*}
	From here on we assume that the indices apperaring in upcoming $F_a$'s and $F_{u_a}$'s are all different. If not, there is only an effect on the number of coefficients and so the constants, but not on the order of our bound. Having that in mind we split $F_u$ in the same manner as before, $F_u = F_{u_a} + F_{u_u}$, such that $F_{u_a} = \prth{A_{i-2} + A_{i+1} + A_{j-2} + A_{j+1}} /4 \sqrt{\Var(G_n)}$. Then
	\begin{align*}
		t \mathbb{E}[X_i X_j F_a e^{tF_u}] % & = t \mathbb{E}[X_i X_j F_a e^{tF_{u_a}} e^{tF_{u_u}}] \\
		& = t \mathbb{E}[X_i X_j F_a e^{tF_{u_u}}] + t^2 \mathbb{E}[X_i X_j F_a F_{u_a} r_1(t F_{u_a}) e^{tF_{u_u}}]
	\end{align*}
	and thus for $i= j + 1$
	\begin{align}\label{2X_firstorder_1}
		t \abs{\mathbb{E}[X_i X_j F_a e^{tF_u}]}  \leq & \frac{C t}{4 \sqrt{\Var(G_n)}} \cdot \abs{a^{(n)}_{i-1}} \cdot \mathbb{E}\left[  e^{tF}\right] e^{ct} \nonumber\\
		& + \frac{C t^2}{16 \Var(G_n)} \cdot \sum_{\substack{m_1 \in \{i-1,i,j-1\} \\ n_1 \in \{i-2,i+1,j-2,j+1\}}} \abs{a^{(n)}_{m_1}} \abs{a^{(n)}_{n_1}} \cdot \mathbb{E}\left[  e^{tF}\right] e^{ct}.
	\end{align}
	In the case $\abs{i-j} \geq 2$ the first term of the last inequality does not appear since all the indices in $X_i X_j F_a$ are different:
	\begin{align}\label{2X_firstorder_2}
		t \abs{\mathbb{E}[X_i X_j F_a e^{tF_u}]}  \leq \frac{C t^2}{16 \Var(G_n)} \cdot \sum_{\substack{m_1 \in \{i-1,i,j-1,j\} \\ n_1 \in \{i-2,i+1,j-2,j+1\}}} \abs{a^{(n)}_{m_1}} \abs{a^{(n)}_{n_1}} \cdot \mathbb{E}\left[  e^{tF}\right] e^{ct}.
	\end{align}
	For the second order term we just bound
	\begin{align} \label{2X_secondorder}
		t^2 \abs{\mathbb{E}[X_i X_j F_a^2 r_2(t F_a) e^{tF_u}/2]} & \leq \frac{C t^2}{32 \Var(G_n)} \cdot \sum_{\substack{m_2,n_2 \in \{i-1,i,j-1,j\}}} \abs{a^{(n)}_{m_2}} \abs{a^{(n)}_{n_2}} \cdot \mathbb{E}\left[  e^{tF}\right] e^{ct}.
	\end{align}
	In the case of $\mathbb{E}[X_i X_j X_k e^{tF}]$:
	\begin{align*}
		F_a = \frac{1}{4 \sqrt{\Var(G_n)}} \prth{A_{i-1} + A_i + A_{j-1} + A_j + A_{k-1} + A_k}
	\end{align*}
	and
	\begin{align*}
		\mathbb{E}[X_i X_j X_k e^{tF}] & = t \mathbb{E}[X_i X_j X_k F_a e^{tF_u}] + t^2 \mathbb{E}[X_i X_j X_k F_a^2 r_2(t F_a) e^{tF_u}/2].
	\end{align*}
	For the first order term we compute as a preparation
	\begin{align*}
		X_i X_j X_k F_a = & a^{(n)}_{i-1} (X_{i-1} X_i X_j X_k + X_{i-1} X_j X_k+ X_j X_k) \\
		& + a^{(n)}_{i} (X_j X_k + X_{i+1} X_j X_k + X_i X_{i+1} X_j X_k) \\
		& + a^{(n)}_{j-1} (X_{i} X_{j-1} X_j X_k + X_i X_{j-1} X_k + X_i X_k) \\
		& + a^{(n)}_{j} (X_{i} X_k + X_i X_{j+1} X_k + X_i X_j X_{j+1} X_k) \\
		& + a^{(n)}_{k-1} (X_{i} X_j X_{k-1} X_k + X_i X_j X_{k-1} + X_i X_j) \\
		& + a^{(n)}_{k} (X_{i} X_j + X_i X_j X_{k+1} + X_i X_j X_k X_{k+1}).
	\end{align*}
	And by our assumption $i \neq j \neq k$ in every summand at least one $X$ will remain. We split $F_u = F_{u_a} + F_{u_u}$, such that $F_{u_a} = \prth{A_{i-2} + A_{i+1} + A_{j-2} + A_{j+1} + A_{k-2} + A_{k+1}} /4 \sqrt{\Var(G_n)}$. Then
	\begin{align*}
		t \mathbb{E}[X_i X_j X_k F_a e^{tF_u}] & = t \mathbb{E}[X_i X_j X_k F_a e^{tF_{u_a}} e^{tF_{u_u}}] \\
		& = t \mathbb{E}[X_i X_j X_k F_a e^{tF_{u_u}}] + t^2 \mathbb{E}[X_i X_j X_k F_a F_{u_a} r_1(t F_{u_a}) e^{tF_{u_u}}] \\
		& = t^2 \mathbb{E}[X_i X_j X_k F_a F_{u_a} r_1(t F_{u_a}) e^{tF_{u_u}}] \\
	\end{align*}
	by independence and thus
	\begin{align}\label{3X_firstorder}
		t \abs{\mathbb{E}[X_i X_j X_k F_a e^{tF_u}]}  \leq \frac{C t^2}{16 \Var(G_n)} \cdot \sum_{\substack{m_1 \in \{i-1,i,j-1,j,k-1,k\} \\ n_1 \in \{i-2,i+1,j-2,j+1,k-2,k+1\}}} \abs{a^{(n)}_{m_1}} \abs{a^{(n)}_{n_1}} \cdot \mathbb{E}\left[  e^{tF}\right] e^{ct}.
	\end{align}
	For the second order term we just bound
	\begin{align} \label{3X_secondorder}
		t^2 \abs{\mathbb{E}[X_i X_j X_k F_a^2 r_2(t F_a) e^{tF_u}/2]} & \leq \frac{C t^2}{32 \Var(G_n)} \cdot \sum_{\substack{m_2,n_2 \in \{i-1,i,j-1,j,k-1,k\}}} \abs{a^{(n)}_{m_2}} \abs{a^{(n)}_{n_2}} \cdot \mathbb{E}\left[  e^{tF}\right] e^{ct}.
	\end{align}
	In the case of $\mathbb{E}[X_i X_j X_k X_l e^{tF}]$:
	\begin{align*}
		F_a = \frac{1}{4 \sqrt{\Var(G_n)}} \prth{A_{i-1} + A_i + A_{j-1} + A_j + A_{k-1} + A_k + A_{l-1} + A_l}
	\end{align*}
	and
	\begin{align*}
		\mathbb{E}[X_i X_j X_k X_l e^{tF}] & = t \mathbb{E}[X_i X_j X_k X_l F_a e^{tF_u}] + t^2 \mathbb{E}[X_i X_j X_k X_l F_a^2 r_2(t F_a) e^{tF_u}/2].
	\end{align*}
	For the first order term we compute as a preparation
	\begin{align*}
		X_i X_j X_k X_l F_a = & a^{(n)}_{i-1} (X_{i-1} X_i X_j X_k X_l + X_{i-1} X_j X_k X_l+ X_j X_k X_l) \\
		& + a^{(n)}_{i} (X_j X_k X_l + X_{i+1} X_j X_k X_l + X_i X_{i+1} X_j X_k X_l) \\
		& + a^{(n)}_{j-1} (X_{i} X_{j-1} X_j X_k X_l + X_i X_{j-1} X_k X_l + X_i X_k X_l) \\
		& + a^{(n)}_{j} (X_{i} X_k X_l + X_i X_{j+1} X_k X_l + X_i X_j X_{j+1} X_k X_l) \\
		& + a^{(n)}_{k-1} (X_{i} X_j X_{k-1} X_k X_l + X_i X_j X_{k-1} X_l + X_i X_j X_l) \\
		& + a^{(n)}_{k} (X_{i} X_j X_l + X_i X_j X_{k+1} X_l + X_i X_j X_k X_{k+1} X_l) \\
		& + a^{(n)}_{l-1} (X_{i} X_j X_k X_{l-1} X_l + X_i X_j X_k X_{l-1} + X_i X_j X_k) \\
		& + a^{(n)}_{k} (X_{i} X_j X_k + X_i X_j X_k X_{l+1} + X_i X_j X_k X_l X_{l+1}) .
	\end{align*}
	And by our assumption $i \neq j \neq k \neq l$ in every summand at least one $X$ will remain. $F_{u_a}$ is given by $F_{u_a} = \prth{A_{i-2} + A_{i+1} + A_{j-2} + A_{j+1} + A_{k-2} + A_{k+1} + A_{l-2} + A_{l+1}} /4 \sqrt{\Var(G_n)}$ this time. Then
	\begin{align*}
		t \mathbb{E}[X_i X_j X_k X_l F_a e^{tF_u}] & = t^2 \mathbb{E}[X_i X_j X_k X_l F_a F_{u_a} r_1(t F_{u_a}) e^{tF_{u_u}}] \\
	\end{align*}
	by independence and thus
	\begin{align}\label{4X_firstorder}
		t \abs{\mathbb{E}[X_i X_j X_k X_l F_a e^{tF_u}]}  \leq \frac{C t^2}{16 \Var(G_n)} \cdot \qquad \qquad \sum_{\mathclap{\substack{m_1 \in \{i-1,i,j-1,j,k-1,k,l-1,l\} \\ n_1 \in \{i-2,i+1,j-2,j+1,k-2,k+1,l-2,l+1\}}}} \qquad \qquad \abs{a^{(n)}_{m_1}} \abs{a^{(n)}_{n_1}} \cdot \mathbb{E}\left[  e^{tF}\right] e^{ct}.
	\end{align}
	For the second order term we just bound
	\begin{align} \label{4X_secondorder}
		t^2 \abs{\mathbb{E}[X_i X_j X_k X_l F_a^2 r_2(t F_a) e^{tF_u}/2]} & \leq \frac{C t^2}{32 \Var(G_n)} \cdot \qquad \qquad \sum_{\mathclap{\substack{m_2,n_2 \in \{i-1,i,j-1,j,k-1,k,l-1,l\}}}} \qquad \quad \abs{a^{(n)}_{m_2}} \abs{a^{(n)}_{n_2}} \cdot \mathbb{E}\left[  e^{tF}\right] e^{ct}.
	\end{align}
\end{proof}
\noindent Now we are ready to deal with all three classes of subterms and choose $B_2, B_6$ and $B_9$ as representatives: 
\\ \textbf{First class of  subterms, $B_1 - B_4$:}
\begin{align} \label{B2_equality}
\mathbb{E}[B_2 e^{tF}] = \frac{9}{1024 (\Var(G_n))^2} \Bigg(\sum_{k \in \Zz} (a^{(n)}_{k})^4  \mathbb{E}[e^{tF}] & + \sum_{\substack{k,l \in \Zz \\ \abs{k-l} = 1}} (a^{(n)}_{k})^2 (a^{(n)}_{l})^2 \mathbb{E}[X_{k} X_{l}  e^{tF}] \nonumber \\
& + \sum_{\substack{k,l \in \Zz \\ \abs{k-l} \geq 2}} (a^{(n)}_{k})^2 (a^{(n)}_{l})^2 \mathbb{E}[X_{k} X_{l}  e^{tF}] \Bigg).
\end{align}
The first one is the easiest by
\begin{align*}
	B_{21} := \frac{9}{1024 (\Var(G_n))^2} \sum_{k \in \Zz} (a^{(n)}_{k})^4  \mathbb{E}[e^{tF}] =  \frac{9}{1024 (\Var(G_n))^2} \norm{a^{(n)}}_{l^4(\Zz)}^4  \mathbb{E}[e^{tF}].
\end{align*}
By using \eqref{2X_firstorder_1}, \eqref{2X_firstorder_2} and \eqref{2X_secondorder} it remains to  bound 
\begin{align*}
	B_{22} & := \frac{9}{1024 (\Var(G_n))^2} \sum_{\substack{k,l \in \Zz \\ \abs{k-l} = 1}} (a^{(n)}_{k})^2 (a^{(n)}_{l})^2 \mathbb{E}[X_{k} X_{l}  e^{tF}] \\
                       & \leq \frac{9}{1024 (\Var(G_n))^2} \prth{B_{221} + B_{222} + B_{223}} \mathbb{E}\left[  e^{tF}\right] e^{ct},
\end{align*}
\begin{align*}
	B_{23} & := \frac{9}{1024 (\Var(G_n))^2} \sum_{\substack{k,l \in \Zz \\ \abs{k-l} \geq 2}} (a^{(n)}_{k})^2 (a^{(n)}_{l})^2 \mathbb{E}[X_{k} X_{l}  e^{tF}]  \\
                       & \leq \frac{9}{1024 (\Var(G_n))^2} \prth{ B_{231} + B_{232}} \mathbb{E}\left[  e^{tF}\right] e^{ct},
\end{align*}
such that 
\begin{align*}
B_{221} & := \frac{C t}{4 \sqrt{\Var(G_n)}} \cdot \sum_{\substack{k,l \in \Zz \\ \abs{k-l} = 1}} (a^{(n)}_{k})^2 (a^{(n)}_{l})^2  \abs{a^{(n)}_{\min(k,l)}},\\
B_{222} & := \frac{C t^2}{16 \Var(G_n)} \cdot \sum_{\substack{k,l \in \Zz \\ \abs{k-l} = 1}} (a^{(n)}_{k})^2 (a^{(n)}_{l})^2 \cdot \sum_{\substack{m_1 \in \{k-1,k,l-1,l\} \\ n_1 \in \{k-2,k+1,l-2,l+1\}}} \abs{a^{(n)}_{m_1}} \abs{a^{(n)}_{n_1}}, \\
B_{223} & := \frac{C t^2}{32 \Var(G_n)} \cdot \sum_{\substack{k,l \in \Zz \\ \abs{k-l} = 1}} (a^{(n)}_{k})^2 (a^{(n)}_{l})^2 \cdot \sum_{\substack{m_2,n_2 \in \{k-1,k,l-1,l\}}} \abs{a^{(n)}_{m_2}} \abs{a^{(n)}_{n_2}}, \\
B_{231} & := \frac{C t^2}{16 \Var(G_n)} \cdot \sum_{\substack{k,l \in \Zz \\ \abs{k-l} \geq 2}} (a^{(n)}_{k})^2 (a^{(n)}_{l})^2 \cdot \sum_{\substack{m_1 \in \{k-1,k,l-1,l\} \\ n_1 \in \{k-2,k+1,l-2,l+1\}}} \abs{a^{(n)}_{m_1}} \abs{a^{(n)}_{n_1}}, \\
B_{232} & := \frac{C t^2}{32 \Var(G_n)} \cdot \sum_{\substack{k,l \in \Zz \\ \abs{k-l} \geq 2}} (a^{(n)}_{k})^2 (a^{(n)}_{l})^2 \cdot \sum_{\substack{m_2,n_2 \in \{k-1,k,l-1,l\}}} \abs{a^{(n)}_{m_2}} \abs{a^{(n)}_{n_2}}.
\end{align*}
Then by the inequality of arithmetic and geometric means, from here on AM-GM inequality
\begin{align*}
B_{221} & = \frac{C t}{4 \sqrt{\Var(G_n)}} \prth{\sum_{\substack{k \in \Zz}} (a^{(n)}_{k})^2 (a^{(n)}_{k-1})^2  \abs{a^{(n)}_{k-1}} + \sum_{\substack{k \in \Zz}} (a^{(n)}_{k})^2 (a^{(n)}_{k+1})^2  \abs{a^{(n)}_{k}}} \\
              & = \frac{C t}{4 \sqrt{\Var(G_n)}} \prth{\sum_{\substack{k \in \Zz}} \sqrt[5]{\abs{a^{(n)}_{k}}^{10} \abs{a^{(n)}_{k-1}}^{10}  \abs{a^{(n)}_{k-1}}^5} + \sum_{\substack{k \in \Zz}} \sqrt[5]{\abs{a^{(n)}_{k}}^{10} \abs{a^{(n)}_{k+1}}^{10}  \abs{a^{(n)}_{k}}^5}} \\
              & \leq \frac{C t}{20 \sqrt{\Var(G_n)}} \prth{\sum_{\substack{k \in \Zz}} 2 \abs{a^{(n)}_{k}}^{5} + 3 \abs{a^{(n)}_{k-1}}^{5} + \sum_{\substack{k \in \Zz}} 3 \abs{a^{(n)}_{k}}^{5} + 2 \abs{a^{(n)}_{k+1}}^{5}} \\
              & \leq \frac{C t}{ \sqrt{\Var(G_n)}} \norm{a^{(n)}}_{l^5(\Zz)}^5.
\end{align*}
If we look at $B_{222} - B_{232}$, we can change the order of summation since all summands are non-negative. And we become even bigger if we add the missing indices: 
\begin{align*}
B_{223} & \leq \frac{C t^2}{32 \Var(G_n)} \sum_{\substack{k,l \in \Zz}} \sum_{\substack{m_2 \in \{k-1,k,l-1,l\}}} \sum_{\substack{n_2 \in \{k-1,k,l-1,l\}}} (a^{(n)}_{k})^2 (a^{(n)}_{l})^2 \abs{a^{(n)}_{m_2}} \abs{a^{(n)}_{n_2}}.
\end{align*}
From here on we treat different cases, but every time we can use the AM-GM inequality:
\\ Case 1: $m_2 \in \{k-1,k\}$ and $n_2 \in \{k-1,k\}$:
\begin{align*}
\sum_{\substack{k \in \Zz}} \sum_{\substack{l \in \Zz}} (a^{(n)}_{k})^2 (a^{(n)}_{l})^2 \abs{a^{(n)}_{m_2}} \abs{a^{(n)}_{n_2}} & = \sum_{\substack{k \in \Zz}} (a^{(n)}_{k})^2 \abs{a^{(n)}_{m_2}} \abs{a^{(n)}_{n_2}} \sum_{\substack{l \in \Zz}} (a^{(n)}_{l})^2 \\
                               & \leq C \norm{a^{(n)}}_{l^4(\Zz)}^4 \norm{a^{(n)}}_{l^2(\Zz)}^2.
\end{align*}
\\ Case 2: $m_2 \in \{l-1,l\}$ and $n_2 \in \{l-1,l\}$:
\begin{align*}
\sum_{\substack{l \in \Zz}} \sum_{\substack{k \in \Zz}} (a^{(n)}_{k})^2 (a^{(n)}_{l})^2 \abs{a^{(n)}_{m_2}} \abs{a^{(n)}_{n_2}} & = \sum_{\substack{l \in \Zz}} (a^{(n)}_{l})^2 \abs{a^{(n)}_{m_2}} \abs{a^{(n)}_{n_2}} \sum_{\substack{k \in \Zz}} (a^{(n)}_{k})^2 \\
                               & \leq C \norm{a^{(n)}}_{l^4(\Zz)}^4 \norm{a^{(n)}}_{l^2(\Zz)}^2.
\end{align*}
\\ Case 3: $m_2 \in \{k-1,k\}$ and $n_2 \in \{l-1,l\}$:
\begin{align*}
\sum_{\substack{k \in \Zz}} \sum_{\substack{l \in \Zz}} (a^{(n)}_{k})^2 (a^{(n)}_{l})^2 \abs{a^{(n)}_{m_2}} \abs{a^{(n)}_{n_2}} & = \sum_{\substack{k \in \Zz}} (a^{(n)}_{k})^2 \abs{a^{(n)}_{m_2}} \sum_{\substack{l \in \Zz}} (a^{(n)}_{l})^2 \abs{a^{(n)}_{n_2}} \\
                               & \leq C \norm{a^{(n)}}_{l^3(\Zz)}^6.
\end{align*}
\\ Case 4: $m_2 \in \{l-1,l\}$ and $n_2 \in \{k-1,k\}$:
\begin{align*}
\sum_{\substack{l \in \Zz}} \sum_{\substack{k \in \Zz}} (a^{(n)}_{k})^2 (a^{(n)}_{l})^2 \abs{a^{(n)}_{m_2}} \abs{a^{(n)}_{n_2}} & = \sum_{\substack{l \in \Zz}} (a^{(n)}_{l})^2 \abs{a^{(n)}_{m_2}} \sum_{\substack{k \in \Zz}} (a^{(n)}_{k})^2 \abs{a^{(n)}_{n_2}} \\
                               & \leq C \norm{a^{(n)}}_{l^3(\Zz)}^6.
\end{align*}
According to \eqref{two_runs_variance} in case 1 and 2 the norm $\norm{a^{(n)}}_{l^2(\Zz)}^2$ vanishes directly with the variance in the prefactor. Summarizing for $B_{223}$:
\begin{align*}
B_{223} & \leq \frac{C t^2}{32} \norm{a^{(n)}}_{l^4(\Zz)}^4 + \frac{C t^2}{32 \Var(G_n)}  \norm{a^{(n)}}_{l^3(\Zz)}^6.
\end{align*}
Analogously we get basically bounds of the same order for $B_{222},B_{231}$ and $B_{232}$. Combining our bounds for the subterms of \eqref{B2_equality} gives us
\begin{align*}
\mathbb{E}[B_2 e^{tF}] \leq C e^{ct} \prth{t^2 \frac{ \norm{a^{(n)}}_{l^3(\Zz)}^6}{(\Var(G_n))^3} + (1 + t^2) \frac{\norm{a^{(n)}}_{l^4(\Zz)}^4}{ (\Var(G_n))^2} + t \frac{\norm{a^{(n)}}_{l^5(\Zz)}^5}{ (\Var(G_n))^{5/2}} }  \mathbb{E}[e^{tF}].
\end{align*}
\textbf{Second class of  subterms, $B_5 - B_8$:} We write $\mathbb{E}[B_6 e^{tF}] = B_{61} + B_{62} + B_{63}$ such that
\begin{align*}
	B_{61} & := \frac{9}{1024 (\Var(G_n))^2} \sum_{k,l \in \Zz} (a^{(n)}_{k})^2 (a^{(n)}_{l-1}) (a^{(n)}_{l}) \mathbb{E}[X_{k} X_{l-1}  e^{tF}], \\
	B_{62} & := \frac{6}{1024 (\Var(G_n))^2} \sum_{k,l \in \Zz} (a^{(n)}_{k})^2 (a^{(n)}_{l-1}) (a^{(n)}_{l}) \mathbb{E}[X_{k} X_{l-1} X_{l+1} e^{tF}], \\
	B_{63} & := \frac{9}{1024 (\Var(G_n))^2} \sum_{k,l \in \Zz} (a^{(n)}_{k})^2 (a^{(n)}_{l-1}) (a^{(n)}_{l}) \mathbb{E}[X_{k} X_{l+1}  e^{tF}].
\end{align*}
$B_{61}$ and $B_{63}$ are analogous to $B_{22}$ and $B_{23}$ since they have the same structure: Two coefficients with $k$-index, two coefficients with $l$-index, one $X$ with $k$-index and one $X$ with $l$-index. And with that the arguments are the same. Looking at the remaining $B_{62}$ two indices of the $X$'s are equal if and only if $k=l+1$ or $k=l-1$. In the latter case $B_{62}$ reduces to
\begin{align*}
\frac{C}{(\Var(G_n))^2} \sum_{k \in \Zz} (a^{(n)}_{k})^3 (a^{(n)}_{k+1}) \mathbb{E}[X_{k+2} e^{tF}],
\end{align*}
so we can use \eqref{1X_firstorder} and \eqref{1X_secondorder} giving us upper bounds of order 
\begin{align*}
O\prth{t \frac{\norm{a^{(n)}}_{l^5(\Zz)}^5}{ (\Var(G_n))^{5/2}}} \quad \text{and} \quad O\prth{t^2 \frac{\norm{a^{(n)}}_{l^6(\Zz)}^6}{ (\Var(G_n))^{3}}}.
\end{align*}
And the same for $k=l+1$. At last, if neither $k=l-1$ nor $k=l+1$ we can use \eqref{3X_firstorder} and \eqref{3X_secondorder} giving us upper bounds of order 
\begin{align*}
O\prth{t^2 \frac{\norm{a^{(n)}}_{l^3(\Zz)}^6}{ (\Var(G_n))^{3}}} \quad \text{and} \quad  O\prth{t^2 \frac{\norm{a^{(n)}}_{l^4(\Zz)}^4}{ (\Var(G_n))^{2}}}.
\end{align*}
Combining our bounds for $B_{61},B_{62}$ and $B_{63}$ we get
\begin{align*}
\mathbb{E}[B_6 e^{tF}] \leq C e^{ct} \prth{t^2 \frac{ \norm{a^{(n)}}_{l^3(\Zz)}^6}{(\Var(G_n))^3} + t^2 \frac{\norm{a^{(n)}}_{l^4(\Zz)}^4}{ (\Var(G_n))^2} + t \frac{\norm{a^{(n)}}_{l^5(\Zz)}^5}{ (\Var(G_n))^{5/2}} + t^2 \frac{\norm{a^{(n)}}_{l^6(\Zz)}^6}{ (\Var(G_n))^{3}} }  \mathbb{E}[e^{tF}].
\end{align*}
\textbf{Third class of  subterms:} It consists only of $B_9$ and so we have to deal with $\mathbb{E}[B_9 e^{tF}]$. Multiplying all the $X$'s inside we get products of lengths two, three and four. The first two cases are already solved and a product of length four appears only one time, namely
\begin{align*}
		\frac{C}{ (\Var(G_n))^2} \sum_{k,l \in \Zz} (a^{(n)}_{k-1}) (a^{(n)}_{k}) (a^{(n)}_{l-1}) (a^{(n)}_{l}) \mathbb{E}[X_{k-1} X_{k+1} X_{l-1} X_{l+1} e^{tF}].
\end{align*}
We have two pairs of two equal indices of the $X$'s if and only if $k=l$ and then we are in the situation of $B_{21}$. Note that it is impossible that three or more indices are equal. If two indices are equal and two indices are different, e.g. $k-1 = l+1$ we are in the $2X$-case and can use \eqref{2X_firstorder_1}, \eqref{2X_firstorder_2} and \eqref{2X_secondorder}. At last, if all four indices are different, most of the work is done by \eqref{4X_firstorder} and \eqref{4X_secondorder} leading to upper bounds of order 
\begin{align*}
	O\prth{t^2 \frac{\norm{a^{(n)}}_{l^3(\Zz)}^6}{ (\Var(G_n))^{3}}} \quad \text{and} \quad  O\prth{t^2 \frac{\norm{a^{(n)}}_{l^4(\Zz)}^4}{ (\Var(G_n))^{2}}}.
\end{align*}
Combining our bounds from all the different cases we get 
\begin{align*}
	\mathbb{E}[B_9 e^{tF}] \leq C e^{ct} \prth{t^2 \frac{ \norm{a^{(n)}}_{l^3(\Zz)}^6}{(\Var(G_n))^3} + (1+t^2) \frac{\norm{a^{(n)}}_{l^4(\Zz)}^4}{ (\Var(G_n))^2} + t \frac{\norm{a^{(n)}}_{l^5(\Zz)}^5}{ (\Var(G_n))^{5/2}} + t^2 \frac{\norm{a^{(n)}}_{l^6(\Zz)}^6}{ (\Var(G_n))^{3}} }  \mathbb{E}[e^{tF}].
\end{align*}
Summarizing everything we have done so far a bound as in condition \eqref{A1} is obtained by
\begin{align*}
\mathbb{E} \left[ \abs{ 1 - \langle DF , - DL^{-1} F \rangle } e^{tF}\right] & \leq  \prth{ \mathbb{E} \left[ \prth{ 1 - \langle DF , - DL^{-1} F \rangle }^2 e^{tF}\right]}^\frac{1}{2} \prth{\mathbb{E} \left[ e^{tF}\right]}^\frac{1}{2} \\
                     & =  \prth{ \mathbb{E} \left[ \sum_{i=1}^9 B_i e^{tF}\right]}^\frac{1}{2} \prth{\mathbb{E} \left[ e^{tF}\right]}^\frac{1}{2} \\
                     & \leq \sum_{i=1}^9 \prth{ \mathbb{E} \left[ B_i e^{tF}\right]}^\frac{1}{2} \prth{\mathbb{E} \left[ e^{tF}\right]}^\frac{1}{2} \\        
                     &\leq\widetilde{\gamma_1}(t)\mathbb{E}\left[e^{tF}\right]                                                                                                 \end{align*}
such that 
\begin{align*}
	\widetilde{\gamma_1}(t) & = C e^{ct} \prth{t \frac{ \norm{a^{(n)}}_{l^3(\Zz)}^3}{(\Var(G_n))^{3/2}} + (1+t) \frac{\norm{a^{(n)}}_{l^4(\Zz)}^2}{ \Var(G_n)} + t^{1/2} \frac{\norm{a^{(n)}}_{l^5(\Zz)}^{5/2}}{ (\Var(G_n))^{5/4}} + t \frac{\norm{a^{(n)}}_{l^6(\Zz)}^3}{ (\Var(G_n))^{3/2}} } \\
                                                   & =: C e^{ct} \prth{t C_{n,1} + (1+t) C_{n,2} + t^{1/2} C_{n,3} + t C_{n,4} }.
\end{align*}
We now move on and show that a bound as in condition \eqref{A2} exists: Again, by the Cauchy--Schwarz inequality
\begin{align*}
\mathbb{E} \left[ \abs{  \delta\prth{ \frac{1}{\sqrt{pq}} DF \abs{D L^{-1} F} } } e^{tF}\right] \leq  \prth{ \mathbb{E} \left[ \prth{  \delta\prth{ \frac{1}{\sqrt{pq}} DF \abs{D L^{-1} F} } }^2 e^{tF}\right]}^\frac{1}{2} \prth{\mathbb{E} \left[ e^{tF}\right]}^\frac{1}{2}.
\end{align*}
By \cite[Corollary~9.9]{Pr08} it is $\delta(u) = \sum_{k=0}^\infty Y_k u_k$, where $Y_k$ is the $kth$ centered and standardized Rademacher random variable. In our case $Y_k = X_k$ and the corollary can be applied since $u_k := D_k F \abs{D_k L^{-1} F} / \sqrt{p_k q_k}$ does not depend on $X_k$. Then
\begin{align} \label{deltaPrivault}
\prth{ \mathbb{E} \left[ \prth{  \delta\prth{ u} }^2 e^{tF}\right]}^\frac{1}{2} & = \prth{ \sum_{k,l \in \Zz} \mathbb{E} \left[ u_k u_l X_k X_l  e^{tF}\right]}^\frac{1}{2} \nonumber \\
      & \leq \prth{ \sum_{k \in \Zz} \mathbb{E} \left[ u_k^2 e^{tF}\right]}^\frac{1}{2} + \prth{ \sum_{\substack{k,l \in \Zz \\ k \neq l}} \mathbb{E} \left[ u_k u_l X_k X_l  e^{tF}\right]}^\frac{1}{2}.
\end{align}
For the upcoming computations we recall
\begin{align*}
D_k F & = \frac{1}{4 \sqrt{\Var(G_n)}} \prth{ a^{(n)}_{k-1} (X_{k-1} + 1) + a^{(n)}_k (X_{k+1} + 1)}, \\
-D_k L^{-1} F & = \frac{1}{8 \sqrt{\Var(G_n)}} \prth{ a^{(n)}_{k-1} (X_{k-1} + 2) + a^{(n)}_k (X_{k+1} + 2)}.
\end{align*}
and so
\begin{align*}
 D_k F (-D_k L^{-1}F) =  \frac{1}{32 \Var(G_n)} & \Bigl((a^{(n)}_{k-1})^2 \left[ 3 X_{k-1} + 3\right] + (a^{(n)}_{k})^2 \left[3 X_{k+1} + 3\right] \\
     & + a^{(n)}_{k-1} a^{(n)}_{k}\left[3 X_{k-1} + 2 X_{k-1} X_{k+1} + 3 X_{k+1} + 4\right]\Bigl).
\end{align*}
The square of the righthandside is of a familiar form: Every summand consists of a product of length four of coefficients with index $k \pm ...$  multiplied with something bounded, and so as before we get immediately or by the AM-GM-inequality
\begin{align*}
\sum_{k \in \Zz} \mathbb{E} \left[ u_k^2 e^{tF}\right] \leq C \frac{\norm{a^{(n)}}_{l^4(\Zz)}^4}{ (\Var(G_n))^2} \mathbb{E} \left[ e^{tF}\right].
\end{align*}
For the remaining term of \eqref{deltaPrivault} we adapt the strategy that is used in the proof of Lemma~\ref{Lemma:XsTimesMGF} --- see its beginning for a detailed explanation. Set
\begin{align*}
	A_k := a^{(n)}_k \left[X_k  + X_k X_{k+1} + X_{k+1}\right]
\end{align*}
so that
\begin{align*}
F_a = \frac{1}{4 \sqrt{\Var(G_n)}} \prth{A_{k-2} + A_{k-1} + A_{k} + A_{k+1} + A_{l-2} + A_{l-1} + A_{l} + A_{l+1}}
\end{align*}
and by Taylor expansion
\begin{align*}
\mathbb{E}[u_k u_l X_k X_l e^{tF}] & = \mathbb{E}[u_k u_l X_k X_l e^{tF_a} e^{tF_u}] \\
                                       & = \mathbb{E}[u_k u_l X_k X_l e^{tF_u}] + t \mathbb{E}[u_k u_l X_k X_l F_a e^{tF_u}] + t^2 \mathbb{E}[u_k u_l X_k X_l F_a^2 r_2(t F_a) e^{tF_u}/2].
\end{align*}
\textbf{0-order-term:} By independence
\begin{align*}
 \mathbb{E}[u_k u_l X_k X_l e^{tF_u}] =  \mathbb{E}[u_k u_l X_k X_l]  \mathbb{E}[e^{tF_u}].
\end{align*}
Since by definition $X_k$ and $X_l$ respectively $u_k$ are independent, we just have to check whether the same goes for $X_k$ and $u_l$, which is leading to two cases.
\\ Case 1: $l \notin\{ k-1,k+1\}$
\begin{align*}
 \mathbb{E}[u_k u_l X_k X_l] =  \mathbb{E}[X_k]  \mathbb{E}[u_k u_l X_l] = 0.
\end{align*}
Case 2: $l \in\{ k-1,k+1\}$
\begin{align*}
\sum_{k \in \Zz} \mathbb{E}[u_k u_l X_k X_l] \mathbb{E}[e^{tF_u}] \leq C \frac{\norm{a^{(n)}}_{l^4(\Zz)}^4}{ (\Var(G_n))^2} \mathbb{E} \left[ e^{tF}\right] e^{ct},
\end{align*}
following our usual argumentation.
\\ \textbf{1st-order-term:} We split $F_u$ in the same manner as before, $F_u = F_{u_a} + F_{u_u}$, such that $F_{u_a} = \prth{A_{k-3} + A_{k+2} + A_{l-3} + A_{l+2}} /4 \sqrt{\Var(G_n)}$ and use another Taylor expansion of degree 1. Then
\begin{align} \label{deltafirstorder}
t \mathbb{E}[u_k u_l X_k X_l F_a e^{tF_u}] & = t \mathbb{E}[u_k u_l X_k X_l F_a e^{tF_{u_a}} e^{tF_{u_u}}] \nonumber \\
                                                                 & = t \mathbb{E}[u_k u_l X_k X_l F_a e^{tF_{u_u}}] + t^2 \mathbb{E}[u_k u_l X_k X_l F_a F_{u_a} r_1(t F_{u_a}) e^{tF_{u_u}}]. 
\end{align}
Note that $F_{u_u}$ is --- as part of $F_u$ --- independent of $X_k,X_l,u_k$ and $u_l$, but also independent of $F_a$ since we removed $F_{u_a}$, the depending part of $F_u$. As a consequence
\begin{align*}
	\mathbb{E}[u_k u_l X_k X_l F_a e^{tF_{u_u}}] =  \mathbb{E}[u_k u_l X_k X_l F_a]  \mathbb{E}[e^{tF_{u_u}}].
\end{align*}
Our next observation is $\mathbb{E}[u_k u_l X_k X_l F_a] = 0$ for $\abs{k-l}\geq 5$ --- in this case all appearing indices are different and the claim follows ultimately from independence. We treat the remaining case $\abs{k-l}\leq 4$ as four subcases $\abs{k-l} = i$ for $i \in \{1,2,3,4\}$, but here we just write down the first one $\abs{k-l} = 1$ as the others are analogous and so there outcome. In the mentioned case, if $l = k+1$, we receive
% We can bound the first term of \eqref{deltafirstorder} by using
\begin{align*}
t \abs{u_k u_l X_k X_l F_a} \leq \frac{Ct}{ (\Var(G_n))^{5/2}} \quad \sum_{\mathclap{\substack{i_1,i_2 \in \{k-1,k\} \\ i_3,i_4 \in \{k,k+1\} \\ i_5 \in \{k-2,k-1,k,k+1,k+2\}}}} \quad \abs{a_{i_1}^{(n)}} \abs{a_{i_2}^{(n)}} \abs{a_{i_3}^{(n)}} \abs{a_{i_4}^{(n)}} \abs{a_{i_5}^{(n)}}
\end{align*}
and very similar for $l=k-1$. For both every summand consists of a product of five coefficients with index $k \pm ...$, and so we get 
\begin{align*}
\sum_{\substack{k,l \in \Zz \\ k \neq l \\ \abs{k-l}= 1}} t \abs{\mathbb{E}[u_k u_l X_k X_l F_a]  \mathbb{E}[e^{tF_{u_u}}]} & \leq  Ct \frac{\norm{a^{(n)}}_{l^5(\Zz)}^5}{ (\Var(G_n))^{5/2}} \mathbb{E}\left[ e^{tF}\right] e^{ct}.
\end{align*}
Having in mind that $\abs{r_1(t F_{u_a})} \leq e^{t \abs{F_{u_a}}}$ we can bound the second term of \eqref{deltafirstorder} by using
\begin{align*}
t^2 \abs{u_k u_l X_k X_l F_a F_{u_a}} \leq \frac{Ct^2}{ (\Var(G_n))^{3}} \quad \sum_{\mathclap{\substack{i_1,i_2 \in \{k-1,k\} \\ i_3,i_4 \in \{l-1,l\} \\ i_5 \in \{k-2,k-1,k,k+1,l-2,l-1,l,l+1\} \\ i_6 \in \{k-3,k+2,l-3,l+2\}}}} \quad \abs{a_{i_1}^{(n)}} \abs{a_{i_2}^{(n)}} \abs{a_{i_3}^{(n)}} \abs{a_{i_4}^{(n)}} \abs{a_{i_5}^{(n)}} \abs{a_{i_6}^{(n)}}
\end{align*}
and every summand consists of a product of length six of either three coefficients with index $k \pm ...$  and three coefficients with index $l \pm ...$, or four coefficients with index $k \pm ...$  and two coefficients with index $l \pm ...$or the other way around. Combining the cases we get 
\begin{align*}
\sum_{\substack{k,l \in \Zz \\ k \neq l}} t^2 \abs{\mathbb{E}[u_k u_l X_k X_l F_a F_{u_a} r_1(t F_{u_a}) e^{tF_{u_u}}] } & \leq Ct^2 \prth{\frac{\norm{a^{(n)}}_{l^4(\Zz)}^4}{ (\Var(G_n))^{2}} + \frac{\norm{a^{(n)}}_{l^3(\Zz)}^6}{ (\Var(G_n))^{3}}}\mathbb{E}\left[ e^{tF}\right] e^{ct}.
\end{align*}
\textbf{2nd-order-term:} Finally, having in mind that $\abs{r_2(t F_{a})} \leq e^{t \abs{F_{a}}}$ we can bound the last term of our original Taylor expansion by using
\begin{align*}
t^2 \abs{u_k u_l X_k X_l F_a^2} \leq \frac{Ct^2}{ (\Var(G_n))^{3}} \quad \sum_{\mathclap{\substack{i_1,i_2 \in \{k-1,k\} \\ i_3,i_4 \in \{l-1,l\} \\ i_5,i_6 \in \{k-2,k-1,k,k+1,l-2,l-1,l,l+1\} }}} \quad \abs{a_{i_1}^{(n)}} \abs{a_{i_2}^{(n)}} \abs{a_{i_3}^{(n)}} \abs{a_{i_4}^{(n)}} \abs{a_{i_5}^{(n)}} \abs{a_{i_6}^{(n)}}
\end{align*}
and get analogously the bound
\begin{align*}
\sum_{\substack{k,l \in \Zz \\ k \neq l}} t^2 \abs{\mathbb{E}[u_k u_l X_k X_l F_a^2 r_2(t F_{a}) e^{tF_{u}}/2 ] }& \leq Ct^2 \prth{\frac{\norm{a^{(n)}}_{l^4(\Zz)}^4}{ (\Var(G_n))^{2}} + \frac{\norm{a^{(n)}}_{l^3(\Zz)}^6}{ (\Var(G_n))^{3}}}\mathbb{E}\left[ e^{tF}\right] e^{ct}.
\end{align*}
Summarizing everything we have done so far a bound as in condition \eqref{A2} is obtained by
\begin{align*}
\mathbb{E} \left[ \abs{  \delta\prth{ u } } e^{tF}\right] & \leq  \prth{ \mathbb{E} \left[ \prth{  \delta\prth{ u } }^2 e^{tF}\right]}^\frac{1}{2} \prth{\mathbb{E} \left[ e^{tF}\right]}^\frac{1}{2} \\
                                                                                          & \leq \prth{ \prth{ \sum_{k \in \Zz} \mathbb{E} \left[ u_k^2 e^{tF}\right]}^\frac{1}{2} + \prth{ \sum_{\substack{k,l \in \Zz \\ k \neq l}} \mathbb{E} \left[ u_k u_l X_k X_l  e^{tF}\right]}^\frac{1}{2} } \prth{\mathbb{E} \left[ e^{tF}\right]}^\frac{1}{2} \\
                                                                                          & \leq \widetilde{\gamma_2}(t)\mathbb{E}\left[e^{tF}\right]  
\end{align*}
such that 
\begin{align*}
	\widetilde{\gamma_2}(t) & = C e^{ct} \prth{  t \frac{ \norm{a^{(n)}}_{l^3(\Zz)}^3}{(\Var(G_n))^{3/2}} + (1+t) \frac{\norm{a^{(n)}}_{l^4(\Zz)}^2}{ \Var(G_n)} + t^{1/2} \frac{\norm{a^{(n)}}_{l^5(\Zz)}^{5/2}}{ (\Var(G_n))^{5/4}} } \\
                                                   & =: C e^{ct} \prth{t C_{n,1} + (1+t) C_{n,2} + t^{1/2} C_{n,3} }.
\end{align*}
In a final step we want to simplify our bounds by comparing the constants $C_{n,i}$ with each other. To do so, we will use, for $m \geq 2$:
\begin{align}
\norm{a^{(n)}}_{l^m(\Zz)}^m & = \sum_{k \in \Zz} \prth{\abs{a_k}^{m-1} \abs{a_k} } \nonumber \\
                                                   & \leq \sqrt{\sum_{k \in \Zz} \prth{\abs{a_k}^{m-1}}^2 \sum_{k \in \Zz} \abs{a_k}^2 } \nonumber  \\
                                                   & = \sqrt{\sum_{k \in \Zz} \prth{\abs{a_k}^{m-1}}^2  } \cdot C \cdot (\Var(G_n))^{1/2} \label{cons_simp_1}\\
                                                   & \leq \sum_{k \in \Zz} \abs{a_k}^{m-1} \cdot C \cdot (\Var(G_n))^{1/2} \nonumber  \\
                                                   & = \norm{a^{(n)}}_{l^{m-1}(\Zz)}^{m-1} \cdot C \cdot (\Var(G_n))^{1/2} \label{cons_simp_2} \\ \nonumber 
\end{align}
by the Cauchy--Schwarz inequality. Then \eqref{cons_simp_1} and \eqref{cons_simp_2} imply
\begin{align*}
C_{n,1} & = \sum_{k \in \Zz} \abs{a_k}^{3} (\Var(G_n))^{-3/2} \leq \prth{\sum_{k \in \Zz} \abs{a_k}^{4}}^{1/2} (\Var(G_n))^{-1} = C_{n,2}, \\
C_{n,3} & = \prth{\sum_{k \in \Zz} \abs{a_k}^{5}}^{1/2} (\Var(G_n))^{-5/4} \leq \prth{\sum_{k \in \Zz} \abs{a_k}^{4} (\Var(G_n))^{1/2}}^{1/2} (\Var(G_n))^{-5/4} = C_{n,2}, \\
C_{n,4} & = \prth{\sum_{k \in \Zz} \abs{a_k}^{6}}^{1/2} (\Var(G_n))^{-3/2} \leq \prth{\sum_{k \in \Zz} \abs{a_k}^{5} (\Var(G_n))^{1/2}}^{1/2} (\Var(G_n))^{-3/2} \leq C_{n,2}.
\end{align*}
So, we choose $\gamma_1(t) = \gamma_2(t)  := C e^{ct} \prth{ (1 + t^{1/2} + t) C_n }$ for \eqref{A1} and \eqref{A2}, and $C_n := C_{n,2}$. 
\end{proof}
\section{Proofs II: Subgraph counts in the Erd\H{o}s--R\'{e}nyi random graph}

\noindent There are $\binom{n}{2}$ possible edges in the Erd\H{o}s-R\'{e}nyi random graph.
Hence, we can describe it by using as many Rademacher random variables:
Let $E$ be the set of all possible edges of $G(n,p)$
and let $(X_k)_{k \in E}$ be a set of independent Rademacher random variables,
in which $X_k = 1$ indicates the presence of edge $k$ in $G(n,p)$.
Thus, $\Pp(X_k=1)=p$ for all $k \in E$.
In our calculations, we will make use of the following rescaled versions of $(X_k)_{k \in E}$:
For $k \in E$ we define
$B_k = \frac12 (X_k + 1)$, which is Bernoulli($p$)-distributed,
and $Y_k = (pq)^{-\frac12} (B_k - p)$, which is standardized.
Further, for any subset $A \subset E$,
we shorten the notation for the product of $(B_k)_{k \in A}$
or $(Y_k)_{k \in A}$
by defining $B_A := \prod_{k \in A} B_k$
and $Y_A := \prod_{k \in A} Y_k$,
where $B_\emptyset = Y_\emptyset = 1$ by convention.
For any integrable random variable $Z$, let $Z^c := Z - \E[Z]$ denote the centered version of $Z$.

Let $\G$ be a graph with at least one edge.
Neither the standardized number of copies of $\G$ in $G(n,p)$
nor $\Psimin$ do depend on the number of isolated vertices of $\G$,
see Lemma~4.3 in \cite{ER22}.
Hence, without loss of generality, we may assume that $\G$ does not have isolated vertices.
This way, every copy of $\G$ in $G(n,p)$ can be identified by its set of edges.
Therefore, it will be useful to simplify our notation:

For any set of edges $\graph \subset E$
let $\induced{\graph}$ denote the graph consisting of all edges given by $\graph$ and all necessary vertices.
Let $\vertices{\graph}:=\vertices{\induced{\graph}}$, $\edges{\graph}:=\edges{\induced{\graph}}$,
and $\aut{\graph}:=\aut{\induced{\graph}}$.
We will call $\graph$ to be a \emph{copy} of $\G$ if $\induced{\graph}$ is a copy of $\G$.
Let $\copies := \{ \graph \subset E \,\vert\, \graph$ is a copy of $\G \}$ be the set of all possible copies of $\G$ in $G(n,p)$,
and let $\copiesk{k} := \{ \graph \in \copies \,\vert\, k\in\graph \}$ be the set of all possible copies that contain a specific edge $k \in E$.
Further, for any non-empty $A \subset E$ let
$\copiesk{A} := \bigcup_{k \in A} \copiesk{k}$
be the set of all possible copies that contain at least one of the edges given by $A$.
This last definition will be mainly used in case of $A = \graph \in \copies$ being a copy of $\G$.
In this case, $\copiesk{\graph}$ contains all possible copies of $\G$ that have at least one edge in common with $\graph$.
We will call $\copiesk{\graph}$ to be the \emph{neighborhood} of $\graph$, and the elements of $\copiesk{\graph}$ to be the \emph{neighbors} of $\graph$.
Due to symmetry, the cardinality of the neighborhood $\copiesk{\graph}$ does not depend on the choice of $\graph \in \copies$.
We will denote this cardinality by $D$.

The number of copies of $\G$ in $G(n,p)$ is given by
$\sum_{\graph \in \copies} B_\graph$.
Its standardization is
\begin{align*}
	W
	&:= \frac{1}{\sigma} \sum_{\graph \in \copies} (B_\graph - \E[B_\graph])
	 = \frac{1}{\sigma} \sum_{\graph \in \copies} B_\graph^c
	,
\end{align*}
where $\sigma^2 := \Var(\sum_{\graph \in \copies} B_\graph)$.
From Lemma~4.2 in \cite{ER22} we know that
\begin{align}
	\sigma^2
	&\geq
	\frac{q}{2 \cdot \vertices{\G}! \cdot \aut{\G}} \cdot \frac{n^{2\vertices{\G}} p^{2\edges{\G}}}{\Psimin}
	\label{sg:ineq:sigma}
\end{align}
for $n \geq 4\vertices{\G}^2$.

The main goal of this section is to prove Theorem~\ref{sg:theo:main} and Corollary~\ref{sg:cor:main},
which have been presented in our introductory section.
These proofs are postponed to the end of this section.
Before, we present several lemmas that deal with some basic yet very important properties of our random variables.
These results will be useful repeatedly in the proof of Theorem~\ref{sg:theo:main}.

We start by exploring the behavior of our most important operators,
the discrete gradient $D_k$ with $k \in E$
and the pseudo-inverse Ornstein-Uhlenbeck operator $L^{-1}$.
\begin{lemma} \label{sg:lem:LDk}
	For any subset $A \subset E$ and any edge $k \in E$
	there is
	\begin{flalign*}
		(i)&&
			D_k B_A^c
			&=
			\sqrt{pq}
			\cdot \1{k \in A}
			\cdot B_{A \backslash \{ k \} },
			&&\\
		(ii)&&
			- D_k L^{-1} B_A^c
			&=
			\sqrt{pq}
			\cdot \1{k \in A}
			\cdot \sum_{\{k\} \subset \alpha \subset A }
				\frac{p^{\vert A \vert - \vert \alpha \vert}}{\vert A \vert \cdot \binom{\vert A \vert - 1}{\vert \alpha \vert - 1}}
				B_{\alpha \backslash \{ k \} }.
	\end{flalign*}
	In particular, all expressions above are non-negative.
\end{lemma}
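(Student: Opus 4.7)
For part (i), my plan is a direct computation from the definition. Since $\E[B_A]$ is a constant and killed by $D_k$, we have $D_k B_A^c = D_k B_A$. If $k \notin A$, then $B_A$ does not depend on $X_k$ and the discrete gradient vanishes. If $k \in A$, we factor $B_A = B_k \cdot B_{A \setminus \{k\}}$; the $+$-version of the gradient sets $X_k = 1$ (so $B_k = 1$) and the $-$-version sets $X_k = -1$ (so $B_k = 0$), which gives $D_k B_A^c = \sqrt{pq} \cdot B_{A\setminus\{k\}}$ after the prefactor.

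For part (ii), the natural route is via the chaos expansion. Substituting $B_i = p + \sqrt{pq}\,Y_i$ into $B_A = \prod_{i \in A} B_i$ and expanding yields
\begin{align*}
B_A^c = \sum_{\emptyset \neq \alpha \subset A} p^{|A|-|\alpha|}(pq)^{|\alpha|/2}\, Y_\alpha,
\end{align*}
which already displays the chaos decomposition of $B_A^c$. Since $L^{-1}$ divides the $n$-th chaos by $-n$ and $D_k Y_\alpha = \1{k \in \alpha}\, Y_{\alpha \setminus \{k\}}$ (a one-line check using $Y_k^+ - Y_k^- = 1/\sqrt{pq}$), applying $-D_k L^{-1}$ and reindexing $\beta := \alpha \setminus \{k\}$ gives
\begin{align*}
- D_k L^{-1} B_A^c = \sqrt{pq} \cdot \1{k \in A} \sum_{\beta \subset A \setminus \{k\}} \frac{p^{|A|-|\beta|-1}(pq)^{|\beta|/2}}{|\beta|+1}\, Y_\beta.
\end{align*}

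It remains to rewrite this in the $B$-basis. I would expand $(pq)^{|\beta|/2} Y_\beta = \prod_{i\in\beta}(B_i - p) = \sum_{\gamma \subset \beta}(-p)^{|\beta|-|\gamma|} B_\gamma$, interchange the order of summation, and collect terms by $m := |\beta|-|\gamma|$. The resulting inner sum collapses, via the beta-integral identity $\sum_{m=0}^{M} \binom{M}{m}(-1)^m/(N+m) = 1/(N \binom{N+M}{M})$ applied with $N = |\gamma|+1$ and $M = |A|-|\gamma|-1$, to exactly the coefficient $p^{|A|-|\alpha|}/\bigl(|A|\binom{|A|-1}{|\alpha|-1}\bigr)$ after relabeling $\alpha = \gamma \cup \{k\}$. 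Non-negativity in both (i) and (ii) is then manifest from the resulting closed forms, since every factor is positive and each $B_\gamma \in \{0,1\}$. I expect the main obstacle to be the bookkeeping in this last step: once the beta identity is invoked the algebra is routine, but one has to track carefully how the sum over $\beta \supset \gamma$ together with the alternating powers of $p$ conspire to yield the clean coefficient.
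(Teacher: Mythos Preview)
Your proposal is correct and follows essentially the same route as the paper: chaos expansion of $B_A^c$ in the $Y$-basis, application of $L^{-1}$, conversion back to the $B$-basis, and collapse of the alternating sum via the Beta-function identity. The only difference is the order of two steps---you apply $D_k$ in the $Y$-basis and then convert, whereas the paper converts $-L^{-1}B_A^c$ fully to the $B$-basis (picking up a harmless deterministic term) and applies $D_k$ afterwards via part~(i); your ordering is marginally cleaner since it sidesteps that constant term, but the substance is identical.
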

\begin{remark} \label{sg:rem:1}
	In the application of Lemma~\ref{sg:lem:LDk} $(ii)$ it will be useful to keep in mind that
	\begin{align*}
		\sum_{\{k\} \subset \alpha \subset A }
				\frac{1}{\vert A \vert \cdot \binom{\vert A \vert - 1}{\vert \alpha \vert - 1}}
		&=
		\sum_{\alpha \subset A \backslash \{k\} }
				\frac{1}{\vert A \vert \cdot \binom{\vert A \vert - 1 }{\vert \alpha \vert}}
		=
		\sum_{i=0}^{\vert A \vert - 1} \bigg( \frac{1}{\vert A \vert}
		\sum_{ \substack{\alpha \subset A \backslash \{k\} \\ \vert\alpha\vert = i} } \frac{1}{\binom{\vert A \vert - 1}{i}} \bigg)
		=
		\sum_{i=0}^{\vert A \vert - 1} \frac{1}{\vert A \vert}
		=
		1
	\end{align*}
	for any non-empty subset $A \subset E$ and any edge $k \in A$.
\end{remark}
\begin{proof}[Proof of Lemma~\ref{sg:lem:LDk}]
	First, we note that $D_k B_A^c = D_k (B_A - \E[B_A]) = D_k B_A$.
	If $k \not\in A$, $B_A$ is independent of $X_k$ so that $D_k B_A = 0$.
	If $k \in A$, there is $B_A = B_{ A \backslash \{ k \} } \cdot B_k = B_{ A \backslash \{ k \} } \cdot \frac12(X_k+1)$
	so that
	$D_k B_A = \sqrt{pq} \cdot ( B_{ A \backslash \{ k \} } \cdot 1 - B_{ A \backslash \{ k \} } \cdot 0)$.
	This proves $(i)$.
	
	To prove the second part of our statement,
	we need to represent $B_A$ with regard to the standardized random variables $Y_\ell = (pq)^{-\frac12} (B_\ell - p)$, $\ell \in E$.
	By expansion of the product, we see that
	\begin{align*}
		B_A^c
		&= B_A - \E[B_A]
		= \prod_{\ell \in A} \big( (pq)^{\frac12} Y_\ell + p \big) - p^{\vert A \vert}
		%= \sum_{\alpha \subset A} p^{\vert A \vert - \frac12 \vert\alpha\vert} q^{\frac12 \vert\alpha\vert} Y_\alpha - p^{\vert A \vert}
		= \sum_{\emptyset \neq \alpha \subset A} p^{\vert A \vert - \frac12 \vert\alpha\vert} q^{\frac12 \vert\alpha\vert} Y_\alpha
		.
	\end{align*}
	Now we are able to describe how the pseudo-inverse Ornstein--Uhlenbeck operator works on this expression.
	We get
	\begin{align*}
		-L^{-1} B_A^c
		&=\sum_{\emptyset \neq \alpha \subset A}
			\frac{
				p^{\vert A \vert - \frac12 \vert\alpha\vert}
				q^{\frac12 \vert\alpha\vert}
			}{\vert\alpha\vert}
			Y_\alpha
		.
	\end{align*}
	Having applied the operator,
	we now want to transform the result back to a representation using the Bernoulli variables $(B_\ell)_{\ell \in E}$.
	Thus,
	\begin{align*}
		-L^{-1} B_A^c
		&=\sum_{\emptyset \neq \alpha \subset A}
			\bigg(
			\frac{
				p^{\vert A \vert - \frac12 \vert\alpha\vert}
				q^{\frac12 \vert\alpha\vert}
			}{\vert\alpha\vert}
			\cdot \prod_{\ell \in \alpha}
				\Big( (pq)^{-\frac12} (B_\ell - p) \Big)
			\bigg)%\\
		%&=\sum_{\emptyset \neq \alpha \subset A}
		%	\bigg(
		%	\frac{
		%		p^{\vert A \vert - \vert\alpha\vert}
		%	}{\vert\alpha\vert}
		%	\cdot \sum_{\hat\alpha \subset \alpha}
		%		(-p)^{\vert \alpha \vert - \vert \hat\alpha \vert}
		%		B_{\hat\alpha}
		%	\bigg)\\
		%&
		=\sum_{\emptyset \neq \alpha \subset A}
			\sum_{\hat\alpha \subset \alpha}
				\frac{
					p^{\vert A \vert - \vert\hat\alpha\vert}
				}{\vert\alpha\vert}
				(-1)^{\vert \alpha \vert - \vert \hat\alpha \vert}
				B_{\hat\alpha}
			.
	\end{align*}
	The double sum can be rearranged
	so that we can first choose $\hat\alpha$ as an arbitrary subset of $A$,
	and then fill up the rest $\tilde\alpha := \alpha\backslash\hat\alpha \subset A\backslash\hat\alpha$.
	However, using this rearrangement,
	we have to explicitly exclude the case in which $\hat\alpha = \tilde\alpha = \emptyset$.
	This leads to
	\begin{align*}
		-L^{-1} B_A^c
		&=\sum_{\hat\alpha \subset A}
			\sum_{\substack{\tilde\alpha\subset A\backslash\hat\alpha\\
				\hat\alpha \cup \tilde\alpha \neq \emptyset}}
				\frac{
					p^{\vert A \vert - \vert\hat\alpha\vert}
				}{\vert\hat\alpha\vert + \vert\tilde\alpha\vert}
				(-1)^{\vert \tilde\alpha \vert}
				B_{\hat\alpha}
			\\
		&= \sum_{\emptyset\neq\tilde\alpha\subset A}
				\frac{
					p^{\vert A \vert}
				}{\vert\tilde\alpha\vert}
				(-1)^{\vert \tilde\alpha \vert}
				B_\emptyset
			+			
			\sum_{\emptyset \neq \hat\alpha \subset A}
			\sum_{\tilde\alpha\subset A\backslash\hat\alpha}
				\frac{
					p^{\vert A \vert - \vert\hat\alpha\vert}
				}{\vert\hat\alpha\vert + \vert\tilde\alpha\vert}
				(-1)^{\vert \tilde\alpha \vert}
				B_{\hat\alpha}
			\\
		&= p^{\vert A \vert}
			\sum_{i = 1}^{\vert A \vert}
				\binom{\vert A \vert}{i}
				\frac{
					(-1)^{i}
				}{i}
			+			
			\sum_{\emptyset \neq \hat\alpha \subset A}
			\Bigg(
				p^{\vert A \vert - \vert\hat\alpha\vert}
				B_{\hat\alpha}
			\sum_{i=0}^{\vert A \vert - \vert\hat\alpha\vert}
				\binom{\vert A \vert - \vert\hat\alpha\vert}{i}
				\frac{
					(-1)^{i}
				}{\vert\hat\alpha\vert + i}
			\Bigg)
			.
	\end{align*}
	The first term above is not random but deterministic
	and hence not of interest,
	as it will disappear after the application of the discrete gradient.
	To handle the second term, we use the Beta-function,
	which is usually defined via $\operatorname{Beta}(x,y) = \int_0^1 t^{x-1} (1-t)^{y-1} dt$
	for all $x,y \in \C$ that have a positive real part.
	It is well known that in case of $x,y \in \N$
	there is
	$\sum_{i=0}^{x-1} \binom{x-1}{i} \frac{(-1)^i}{y+i}
	= \operatorname{Beta}(x,y)
	= (x+y-1)^{-1} \cdot \binom{x+y-2}{y-1}^{-1}$.
	Application of these results on
	$x=\vert A \vert - \vert\hat\alpha\vert + 1$ and $y=\vert\hat\alpha\vert$ yields
	\begin{align*}
		\sum_{i=0}^{\vert A \vert - \vert\hat\alpha\vert}
				\binom{\vert A \vert - \vert\hat\alpha\vert}{i}
				\frac{
					(-1)^{i}
				}{\vert\hat\alpha\vert + i}
		&= \frac{1}{\vert A \vert \cdot \binom{\vert A \vert - 1}{\vert \hat\alpha \vert - 1}},
	\end{align*}
	so that
	\begin{align*}
		-L^{-1} B_A^c
		&= p^{\vert A \vert}
			\sum_{i = 1}^{\vert A \vert}
				\binom{\vert A \vert}{i}
				\frac{
					(-1)^{i}
				}{i}
			+			
			\sum_{\emptyset \neq \hat\alpha \subset A}
				\frac{p^{\vert A \vert - \vert \hat\alpha \vert}}{\vert A \vert \cdot \binom{\vert A \vert - 1}{\vert \hat\alpha \vert - 1}}
				B_{\hat\alpha}
			.
	\end{align*}
	When applying the discrete gradient,
	the deterministic term cancels itself out,
	while the second term can be handled the same way as shown in the first part of this proof.
	We arrive at
	\begin{align*}
		-D_k L^{-1} B_A^c
		&= \sqrt{pq} \cdot
			\sum_{\emptyset \neq \hat\alpha \subset A}
				\frac{p^{\vert A \vert - \vert \hat\alpha \vert}}{\vert A \vert \cdot \binom{\vert A \vert - 1}{\vert \hat\alpha \vert - 1}}
				B_{\hat\alpha \backslash \{k\} }
				\1{k \in \hat\alpha}\\
		&=
			\sqrt{pq}
			\cdot \1{k \in A}
			\cdot \sum_{\{k\} \subset \alpha \subset A }
				\frac{p^{\vert A \vert - \vert \alpha \vert}}{\vert A \vert \cdot \binom{\vert A \vert - 1}{\vert \alpha \vert - 1}}
				B_{\alpha \backslash \{ k \} }
			.
	\end{align*}
	This proves the second part of our statement.
\end{proof}

Next, we want to improve our understanding of the correlation of our Bernoulli random variables $\{B_A\}_{A \subset E}$.
\begin{lemma}\label{sg:lem:correlation}
	For every $i \in \N$ let $A_i \subset E$ be a set of edges.
	And let $I \subset \N$ be a finite index set.
	Then the following statements hold:
	\makeatletter\tagsleft@true\makeatother
	\begin{alignat}{2}
		0 &\; \leq \; \E[B_{A_1}]\E[B_{A_2}] &&\; \leq \; \E[ B_{A_1 \cup A_2} ], \tag{$i$}\\
		0 &\; \leq \; \E[ B_{A_1}^c B_{A_2}^c ] &&\; \leq \; \E[B_{A_1 \cup A_2}], \tag{$ii$}\\
		%0 &\; \leq \; \E[ B_{A_1}^c B_{A_2}^c B_{A_3}^c ] &&\; \leq \; \E[B_{A_1 \cup A_2 \cup A_3}], \tag{$iii$}\\
		0 &\; \leq \; \E\Big[ \Big\vert \prod_{i \in I} B_{A_i}^c \Big\vert \Big] &&\; \leq \; 2^{\vert I \vert} \E[ B_{ \bigcup_{i \in I} A_i}]. \tag{$iii$}
	\end{alignat}
	\makeatletter\tagsleft@false\makeatother
\end{lemma}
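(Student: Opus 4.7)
The strategy rests on two elementary identities that follow directly from the construction: the algebraic identity $B_A B_{A'}=B_{A\cup A'}$ (since $B_k\in\{0,1\}$ implies $B_k^2=B_k$) and the product formula $\E[B_A]=p^{\vert A\vert}$. With these, all four bounds reduce to comparisons of powers of $p\in(0,1)$.

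For $(i)$, observe that $\E[B_{A_1}]\E[B_{A_2}]=p^{\vert A_1\vert+\vert A_2\vert}$ while $\E[B_{A_1\cup A_2}]=p^{\vert A_1\cup A_2\vert}$, and the elementary inequality $\vert A_1\cup A_2\vert\leq\vert A_1\vert+\vert A_2\vert$ together with $p\in(0,1)$ immediately gives the bound. For $(ii)$, expanding the centered product and using the two identities yields
\begin{align*}
\E[B_{A_1}^c B_{A_2}^c]
&= \E[B_{A_1\cup A_2}]-\E[B_{A_1}]\E[B_{A_2}],
\end{align*}
whose upper bound is immediate and whose lower bound is exactly $(i)$.

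For $(iii)$, I would analogously expand
\begin{align*}
\E[B_{A_1}^c B_{A_2}^c B_{A_3}^c]
={}& \E[B_{A_1\cup A_2\cup A_3}]\\
& -\E[B_{A_1}]\E[B_{A_2\cup A_3}]-\E[B_{A_2}]\E[B_{A_1\cup A_3}]-\E[B_{A_3}]\E[B_{A_1\cup A_2}]\\
& +2\,\E[B_{A_1}]\E[B_{A_2}]\E[B_{A_3}].
\end{align*}
The upper bound follows because each of the three middle summands already dominates $\E[B_{A_1}]\E[B_{A_2}]\E[B_{A_3}]$ (apply $(i)$ inside), so the net contribution of the last two lines is non-positive. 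For the non-negativity I would condition first on $B_k$ for $k\in A_1\cap A_2\cap A_3$ and then successively on $B_k$ for $k$ in the pairwise intersections, so that after each step the surviving factors split into independent blocks indexed by the inclusion-exclusion pieces of $A_1,A_2,A_3$, and the pair-level argument used for $(ii)$ can be applied to what remains.

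For $(iv)$, the key pointwise bound is $\vert B_{A_i}^c\vert\leq B_{A_i}+\E[B_{A_i}]$, valid because both $B_{A_i}$ and $\E[B_{A_i}]$ lie in $[0,1]$. Expanding
\begin{align*}
\prod_{i\in I}\bigl(B_{A_i}+\E[B_{A_i}]\bigr)
&=\sum_{J\subset I} B_{\bigcup_{i\in J}A_i}\prod_{i\in I\setminus J}\E[B_{A_i}]
\end{align*}
and taking expectations gives $2^{\vert I\vert}$ terms of the form $p^{\vert\bigcup_{i\in J}A_i\vert+\sum_{i\in I\setminus J}\vert A_i\vert}$. Since every edge of $\bigcup_{i\in I}A_i$ is counted at least once in this exponent, each term is bounded by $p^{\vert\bigcup_{i\in I}A_i\vert}=\E[B_{\bigcup_{i\in I}A_i}]$, and summing yields the claim. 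The main obstacle I anticipate is the non-negativity assertion in $(iii)$: pairwise positive correlation of the $B_A$'s (a Harris/FKG phenomenon) does not in general upgrade to positivity of triple cumulants, so one has to exploit the precise monomial structure of $B_{A_1},B_{A_2},B_{A_3}$ — the iterated conditioning outlined above is my best candidate, since it reduces the triple product to pair cumulants on disjoint edge classes where positivity is visible from $(ii)$.
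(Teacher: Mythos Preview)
Your treatment of $(i)$, $(ii)$, $(iv)$, and the upper bound in $(iii)$ is exactly the paper's argument: the same expansion of the centered product, the same comparison $p^{|A_1|+|A_2|}\leq p^{|A_1\cup A_2|}$, and for $(iv)$ the same triangle-inequality/expansion into $2^{|I|}$ terms each dominated by $p^{|\bigcup_i A_i|}$.

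Your hesitation about the \emph{lower} bound in $(iii)$ is justified, and your proposed iterated conditioning cannot close the gap --- because the inequality is in fact false as stated. Take $A_1=A_2=A_3=\{k\}$ for a single edge $k$; then
\[
\E\big[(B_k^c)^3\big]=p(1-p)^3-(1-p)p^3=pq(1-2p),
\]
which is strictly negative for every $p>\tfrac12$. The lemma imposes no distinctness on the $A_i$, so this is a valid counterexample. The paper's own proof, if you look closely, never addresses this lower bound either: after the three-term expansion it only verifies $\E[B_{A_1}^c B_{A_2}^c B_{A_3}^c]\leq \E[B_{A_1\cup A_2\cup A_3}]$ and then declares ``which proves $(iii)$''. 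In the subsequent applications (the $S_{1,2}$ estimate in the proof of Theorem~\ref{sg:theo:main}) it is really only this upper bound that drives the rate; the asserted non-negativity of $S_{1,2}(t)$ is not needed for the final inequality, since the total sum $S_{1,1}+\dots+S_{1,4}$ is already non-negative by Cauchy--Schwarz and the individual upper bounds suffice. So you have correctly located a defect in the stated lemma; the remedy is to drop the lower bound from $(iii)$ rather than to try to prove it.
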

\begin{proof}
	Obviously, $\E[B_{A_i}] = p^{\vert A_i \vert}$ is non-negative for all $i \in \N$.
	Further,
	\begin{align*}
			\E[B_{A_1}]\E[B_{A_2}]
		&=	p^{\vert A_1 \vert + \vert A_2 \vert}
		\leq	p^{\vert A_1 \cup A_2 \vert}
		=	\E[ B_{A_1 \cup A_2} ].
	\end{align*}
	This proves $(i)$.
	To varify $(ii)$, see
	\begin{align*}
			\E[ B_{A_1}^c B_{A_2}^c ]
		&=	\E \big[ \big( B_{A_1} - \E[B_{A_1}] \big) \cdot \big( B_{A_2} - \E[B_{A_2}] \big) \big]
		=	\E[B_{A_1 \cup A_2}] - \E[B_{A_1}]\E[B_{A_1}]
		.
	\end{align*}
	Hence, $(ii)$ follows from $(i)$.
%	Analogously, due to $(i)$ there is
%	\begin{align*}
%			\E[ B_{A_1}^c B_{A_2}^c B_{A_3}^c ]
%		={}&	\E \big[ \big( B_{A_1} - \E[B_{A_1}] \big) \cdot \big( B_{A_2} - \E[B_{A_2}] \big) \cdot \big( B_{A_3} - \E[B_{A_3}] \big) \big] \\
%		={}&	\E[B_{A_1 \cup A_2 \cup A_3}]
%			- \E[B_{A_1}] \big( \E[B_{A_2 \cup A_3}] - \E[B_{A_2}]\E[B_{A_3}] \big) \\&\phantom{\E[B_{A_1 \cup A_2 \cup A_3}]}
%			- \E[B_{A_2}] \big( \E[B_{A_1 \cup A_3}] - \E[B_{A_1}]\E[B_{A_3}] \big) \\&\phantom{\E[B_{A_1 \cup A_2 \cup A_3}]}
%			- \E[B_{A_3}] \big( \E[B_{A_1 \cup A_2}] - \E[B_{A_1}]\E[B_{A_2}] \big) \\&\phantom{\E[B_{A_1 \cup A_2 \cup A_3}]}
%			- \E[B_{A_1}]\E[B_{A_2}]\E[B_{A_3}] \\
%		%\leq{}&	\E[B_{A_1 \cup A_2 \cup A_3}] - \E[B_{A_1}]\E[B_{A_2}]\E[B_{A_3}] \\
%		\leq{}&	\E[B_{A_1 \cup A_2 \cup A_3}]
%		,
%	\end{align*}
%	which proves $(iii)$.
	Finally, by expansion of the product, we see
	\begin{align*}
			\E\Big[ \Big\vert \prod_{i \in I} B_{A_i}^c \Big\vert \Big]
		&\leq
			\sum_{\hat I \subset I} \bigg( \E\Big[ \prod_{i \in \hat I} B_{A_i} \Big] \cdot \prod_{i \in I \backslash \hat I} \E[ B_{A_i} ] \bigg)
		\leq
			\sum_{\hat I \subset I} \E[ B_{ \bigcup_{i \in I} A_i}]
		=
			2^{\vert I \vert} \E[ B_{ \bigcup_{i \in I} A_i}],
	\end{align*}
	where we again used our result from $(i)$.
	This finishes the proof.
\end{proof}

We will later have to deal with sums of powers of $p$,
in which the structure of the sum is determined via the neighborhood structure of the copies of $\G$.
These sums together with the variance $\sigma^2$ are the main reason for the appearance of $\Psimin$ in our results,
and will now be examined in detail.
\begin{definition}\label{sg:defn:connected}
	Let $m \in \N$ be a natural number,
	and let $M=(\graph_i)_{i=1}^m \subset \copies$ be an ordered set of copies,
	in which each copy is a neighbor of at least one of its predecessors,
	i.e. $\graph_1 \in \copies$ and $\graph_i \in \bigcup_{j=1}^{i-1} \copiesk{\graph_j}$ for all $2 \leq i \leq m$.
	We then call $M$ to be a \emph{set of connected copies} of size $m$.
\end{definition}
\begin{lemma}\label{sg:lem:connected}
	Let $m$ and $\hat m$ be natural numbers,
	and let $\bar m = m + \hat m$.
	Then
	\begin{flalign*}
		(i)&&
		\sum_{\mathclap{ M : \vert M \vert = m \text{\,con.\,cop.}}}
			p^{\vert \bigcup_{\graph \in M} \graph \vert}
		&\leq
			c_m
			\cdot
			\frac{n^{m \cdot \vertices{\G}} \cdot p^{m \cdot \edges{\G}}}{\Psimin^{m-1}}
		,&&\\
		(ii)&&
		\sum_{\mathclap{ \substack{ M : \vert M \vert = m  \text{\,con.\,cop.}
						\\ \hat M : \vert \hat M \vert = \hat m  \text{\,con.\,cop.}}}}
			p^{\vert \bigcup_{\graph \in M \cup \hat M} \graph \vert}
		&\leq
			c_{\bar m}
			\cdot
			\frac{n^{\bar m \cdot \vertices{\G}} \cdot p^{ \bar m \cdot \edges{\G}}}{\Psimin^{\bar m -2} \cdot \min\{ \Psimin , 1 \} }
		,&&
	\end{flalign*}
	where the sums run over all sets of connected copies of size $m$ or $\hat m$, respectively,
	and where the constants are given by
	$
		c_k := (\vertices{\G}!)^{k-1} \cdot 2^{\frac12 k (k-1) \cdot \edges{\G}} \cdot \aut{\G}^{-k}
	$
	for $k \in \N$.
\end{lemma}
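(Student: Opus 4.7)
The plan is to prove part $(i)$ by induction on $m$, and then to derive $(ii)$ by a case distinction based on whether $\bigcup_{\graph\in M}\graph$ and $\bigcup_{\graph\in\hat M}\graph$ share an edge.

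For the base case $m=1$ of $(i)$, the sum reduces to $\vert\copies\vert\cdot p^{\edges{\G}}$, and the standard estimate $\vert\copies\vert\leq n^{\vertices{\G}}/\aut{\G}$, obtained by counting injective maps from the vertex set of $\G$ into the available $n$ vertices modulo $\aut{\G}$, gives exactly $c_1\cdot n^{\vertices{\G}} p^{\edges{\G}}$, since $c_1=\aut{\G}^{-1}$. For the inductive step, I would fix a connected tuple $M'=(\graph_1,\ldots,\graph_{m-1})$, set $U:=\bigcup_{i<m}\graph_i$, and classify the admissible $\graph_m$ by the intersection $H:=\graph_m\cap U$, which is forced to be a non-empty subset of $U$. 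Writing $p^{\vert\bigcup M\vert}=p^{\vert U\vert}\cdot p^{\edges{\G}-\edges{H}}$, one first sums over the possible $H$ and then counts how many copies of $\G$ contain any fixed such $H$, obtaining at most $\vertices{\G}!\cdot\aut{\G}^{-1}\cdot n^{\vertices{\G}-\vertices{H}}$. The crucial observation is $n^{\vertices{\G}-\vertices{H}} p^{\edges{\G}-\edges{H}}=(n^{\vertices{\G}} p^{\edges{\G}})/(n^{\vertices{H}} p^{\edges{H}})\leq n^{\vertices{\G}} p^{\edges{\G}}/\Psimin$, which is exactly where the definition of $\Psimin$ enters. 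Combined with the crude bound $2^{(m-1)\edges{\G}}$ on the number of non-empty subgraphs of $U$, the inner sum is bounded by $\vertices{\G}!\cdot 2^{(m-1)\edges{\G}}\cdot\aut{\G}^{-1}\cdot n^{\vertices{\G}} p^{\edges{\G}}/\Psimin$, and the inductive hypothesis for $M'$ then closes the recursion $c_m=c_{m-1}\cdot\vertices{\G}!\cdot 2^{(m-1)\edges{\G}}/\aut{\G}$, which telescopes to the stated closed form.

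For part $(ii)$, in the edge-disjoint case there is $\vert\bigcup(M\cup\hat M)\vert=\vert\bigcup M\vert+\vert\bigcup\hat M\vert$, so the sum factorizes, and two applications of $(i)$ give the bound $c_m c_{\hat m}\cdot n^{\bar m\vertices{\G}} p^{\bar m\edges{\G}}/\Psimin^{\bar m-2}$; the ratio $c_{\bar m}/(c_m c_{\hat m})=\vertices{\G}!\cdot 2^{m\hat m\edges{\G}}$ is at least $1$, and since $\min\{\Psimin,1\}\leq 1$ this contribution satisfies the stated bound. When $\bigcup M$ and $\bigcup\hat M$ share an edge, I would run an argument analogous to the inductive step of $(i)$: classify pairs by the index of the first cross-edge and peel it off using the same subgraph-count together with the $\Psimin$-trick, which introduces one additional factor of $\Psimin^{-1}$ coming from the cross-edge. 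This produces a bound with $\Psimin^{\bar m-1}$ in the denominator, which is majorized by $\Psimin^{\bar m-2}\cdot\min\{\Psimin,1\}$.

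The main obstacle I expect is the constant bookkeeping in the inductive step of $(i)$: the count of copies through a given $H$, multiplied with the number of such $H$, must close the recursion exactly into the prescribed $c_m$, which requires keeping track of the telescoping identity $\sum_{k=1}^{m-1} k=m(m-1)/2$ in the exponent of $2$. For part $(ii)$ the delicate point is the shared-edge case: concatenation of two connected tuples is not automatically a connected tuple, so a direct reduction to $(i)$ for $\bar m$ is not immediate, and one has to either reorder carefully or, as outlined above, perform a separate inductive argument that tracks where the first cross-edge between $M$ and $\hat M$ sits.
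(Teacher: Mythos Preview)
Your argument for part $(i)$ is essentially the paper's: induction on $m$, the same base case via $\vert\copies\vert\leq n^{\vertices{\G}}/\aut{\G}$, the same inductive step classifying $\graph_m$ by its intersection $H$ with $\bigcup_{i<m}\graph_i$, the same count $\vertices{\G}!\cdot\aut{\G}^{-1}\cdot n^{\vertices{\G}-\vertices{H}}$ of copies through a given $H$, the same use of $n^{\vertices{H}}p^{\edges{H}}\geq\Psimin$, and the same crude bound $2^{(m-1)\edges{\G}}$ on the number of admissible $H$. Your recursion $c_m=c_{m-1}\cdot\vertices{\G}!\cdot 2^{(m-1)\edges{\G}}/\aut{\G}$ closes correctly to the stated closed form.

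For part $(ii)$ your plan works, but the paper takes a shorter route that sidesteps exactly the difficulty you flag in your last paragraph. The key observation is that in the concatenation $\bar M=(\graph_1,\dots,\graph_m,\hat\graph_1,\dots,\hat\graph_{\hat m})$ the \emph{only} step at which the new copy can fail to be a neighbor of its predecessors is the addition of $\hat\graph_1$: every $\hat\graph_j$ with $j\geq 2$ is, by the definition of $\hat M$ as a connected tuple, already a neighbor of some $\hat\graph_k$ with $k<j$, hence a neighbor of a predecessor in $\bar M$. So the paper simply reruns the inductive build-up of $(i)$ through all $\bar m$ copies, but at the single step of adding $\hat\graph_1$ allows the intersection $\hat H=\hat\graph_1\cap\bigcup_i\graph_i$ to be empty, replacing the lower bound $n^{\vertices{\hat H}}p^{\edges{\hat H}}\geq\Psimin$ by $n^{\vertices{\hat H}}p^{\edges{\hat H}}\geq\min\{\Psimin,1\}$ (the empty graph gives $n^0p^0=1$). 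This yields the constant $c_{\bar m}$ and the denominator $\Psimin^{\bar m-2}\cdot\min\{\Psimin,1\}$ in one stroke, with no case distinction and no reordering. Your disjoint case is clean, and your shared-edge case can be made to work as you sketch, but pinning the constant down to $c_{\bar m}$ rather than $c_{\bar m}$ times some overcounting factor from the reordering needs more bookkeeping than the paper's unified argument.
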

This lemma summarizes a counting technique that is commonly used in the field of random subgraph counting,
see e.g. (3) in \cite{R88}, (3.10) in \cite{BKR89}, and (3.10) in \cite{JLR00}.
However, in contrast to these references, we formulate the result of this counting strategy as an independent lemma
for an arbitrary number of connected copies of $\G$.
%\BRcomm{In \cite{JLR00} wurde dieses Resultat nur f\"ur den Spezialfall $m=2$ im Zusammenhang mit der Varianz als eigenst\"andiges Lemma formuliert, in den anderen genannten Papern wurde dieses Resultat stets nur im Rahmen l\"angerer Beweise benutzt, und zwar auch nur f\"ur die F\"alle $m=2$, $m=3$ oder $m=6$. Weitere Referencen, die hier aufgez\"ahlt werden k\"onnten: (5.11) in \cite{Z19} (Zhang-Paper), sowie \cite{ER22} (unser Bernoulli-Paper) und \cite{ERTZ22} (unser Poincare-Paper). Ich hatte aber den Eindruck, dass die Liste zu lang werden w\"urde, wenn ich wirklich alle diese Paper aufz\"ahlen w\"urde.}
\begin{proof}[Proof of Lemma~\ref{sg:lem:connected}]
	To prove $(i)$, let us first assume that $m=1$.
	In this case, every set of connected copies $M$ consists of only one copy $\graph_1$
	that can be arbitrarily chosen among all copies in $\copies$.
	Therefore, we are interested in the size of $\copies$:
	There are $\binom{n}{\vertices{\G}}$ possibilities to choose $\vertices{\G}$ vertices.
	On every set of $\vertices{\G}$ vertices, we can find $\frac{\vertices{\G}!}{\aut{\G}}$ copies of $\G$.
	So there are
	$\frac{\vertices{\G}!}{\aut{\G}} \cdot \binom{n}{\vertices{\G}}
	\leq \frac{n^{\vertices{\G}}}{\aut{\G}}$
	possible choices for $\graph_1 \in \copies$.
	Hence,
	\begin{align*}
		\sum_{\mathclap{ \substack{ M : \vert M \vert = 1 \\ \text{\,con.\,cop.} }}}
			p^{\vert \bigcup_{\graph \in M} \graph \vert}
		&=
		\sum_{\mathclap{ \graph \in \copies }}
			p^{\vert \graph \vert}
		\leq
		\frac{n^{\vertices{\G}}}{\aut{\G}} \cdot p^{e_\G}
		=
			c_1
			\cdot
			\frac{n^{1 \cdot \vertices{\G}} \cdot p^{1 \cdot \edges{\G}}}{\Psimin^{0}}
		.
	\end{align*}
	Next, we assume that $(i)$ holds true for some fixed $m \in \N$
	and we want to prove that $(i)$ still holds if $m$ is increased by $1$.
	If $\hat M$ is a set of connected copies of size $m+1$,
	then the first $m$ elements of $\hat M$ form a set of connected copies of size $m$.
	Hence,
	\begin{align*}
		\sum_{\mathclap{ \substack{ \hat M : \vert \hat M \vert = m+1 \\ \text{\,con.\,cop.} }}}
			p^{\vert \bigcup_{\graph \in \hat M} \graph \vert}
		&=\;\;\;
		\sum_{\mathclap{ \substack{ M : \vert M \vert = m \\ \text{\,con.\,cop.} }}}
			p^{\vert \bigcup_{\graph \in M} \graph \vert}
		\;
		\sum_{\mathclap{ \graph_{m+1} \in \bigcup_{\graph \in M} \copiesk{\graph} }}
			p^{\vert \graph_{m+1} \backslash \bigcup_{\graph \in M} \graph \vert}
		.
	\end{align*}
	Given $M$ a set of connected copies of size $m$,
	$\graph_{m+1}$ has to be a neighbor of at least one $\graph \in M$.
	In particular, the intersection $\graph_{m+1} \cap \bigcup_{\graph \in M} \graph$ may not be empty.
	To understand the resulting structure, we fix a non-empty subset
	$ H \subset \bigcup_{\graph \in M} \graph$
	and we count the possible choices for $\graph_{m+1}$,
	whose intersection with $\bigcup_{\graph \in M} \graph$ equals $H$.
	Assuming that $H$ is small enough that it can be completed to a copy of $\G$,
	there are $\binom{n-\vertices{H}}{\vertices{\G}-\vertices{H}}$ possibilities
	to complete the $\vertices{H}$ vertices of $H$ to a set of $\vertices{\G}$ vertices.
	On such a set there are $\frac{\vertices{\G}!}{\aut{\G}}$ copies of $\G$.
	Hence, we can find at most
	$\frac{\vertices{\G}!}{\aut{\G}} \cdot \binom{n-\vertices{H}}{\vertices{\G}-\vertices{H}}
	\leq \frac{\vertices{\G}!}{\aut{\G}} \cdot n^{\vertices{\G} - \vertices{H}}$
	possible copies $\graph_{m+1} \in \copies$ with $H = \graph_{m+1} \cap \bigcup_{\graph \in M} \graph$.
	Further note that
	$\vert \graph_{m+1} \backslash \bigcup_{\graph \in M} \graph \vert
	= \vert \graph_{m+1} \backslash H \vert
	= \edges{\G} - \edges{H}$.
	Thus,
	\begin{align*}
		\sum_{\mathclap{ \substack{ M : \vert M \vert = m \\ \text{\,con.\,cop.} }}}
			p^{\vert \bigcup_{\graph \in M} \graph \vert}
		\;
		\sum_{\mathclap{ \graph_{m+1} \in \bigcup_{\graph \in M} \copiesk{\graph} }}
			p^{\vert \graph_{m+1} \backslash \bigcup_{\graph \in M} \graph \vert}
		&\leq\;\;\;
		\sum_{\mathclap{ \substack{ M : \vert M \vert = m \\ \text{\,con.\,cop.} }}}
			p^{\vert \bigcup_{\graph \in M} \graph \vert}
		\;
		\sum_{\mathclap{ H \underset{\G}{\subset} \bigcup_{\graph \in M} \copiesk{\graph} }}
			\vertices{\G}! \cdot \aut{\G}^{-1} \cdot
			n^{\vertices{\G} - \vertices{H}} \cdot
			p^{\edges{\G} - \edges{H}}	
		,
	\end{align*}
	where $H \underset{\G}{\subset} \bigcup_{\graph \in M} \copiesk{\graph}$ denotes
	that $H$ is a non-empty subset of $\bigcup_{\graph \in M} \copiesk{\graph}$
	while additionally being isomorphic to a subset of $\G$.
	There are less than $2^{m\cdot \edges{\G}}$ sets with these properties.
	For each of them there is $n^{\vertices{H}} p^{\edges{H}} \geq \Psimin$,
	so that
	\begin{align*}
		\sum_{\mathclap{ \substack{ M : \vert M \vert = m \\ \text{\,con.\,cop.} }}}
			p^{\vert \bigcup_{\graph \in M} \graph \vert}
		\;
		\sum_{\mathclap{ H \underset{\G}{\subset} \bigcup_{\graph \in M} \copiesk{\graph} }}
			\vertices{\G}! \cdot \aut{\G}^{-1} \cdot
			n^{\vertices{\G} - \vertices{H}} \cdot
			p^{\edges{\G} - \edges{H}}	
		&\leq\;\;\;
		\sum_{\mathclap{ \substack{ M : \vert M \vert = m \\ \text{\,con.\,cop.} }}}
			p^{\vert \bigcup_{\graph \in M} \graph \vert}
		\cdot
		2^{m\cdot \edges{\G}}
		\cdot
		\frac{\vertices{\G}!}{\aut{\G}}
		\cdot
		\frac{n^{\vertices{\G}} p^{\edges{\G}}}{\Psimin}
		.
	\end{align*}
	As we assumed that $(i)$ holds true for $m$, we arrive at
	\begin{align*}
		\sum_{\mathclap{ \substack{ M : \vert M \vert = m \\ \text{\,con.\,cop.} }}}
			p^{\vert \bigcup_{\graph \in M} \graph \vert}
		\cdot
		2^{m\cdot \edges{\G}}
		\cdot
		\frac{\vertices{\G}!}{\aut{\G}}
		\cdot
		\frac{n^{\vertices{\G}} p^{\edges{\G}}}{\Psimin}
		&\leq
			c_m
			\cdot
			\frac{n^{m \cdot \vertices{\G}} \cdot p^{m \cdot \edges{\G}}}{\Psimin^{m-1}}
		\cdot
		2^{m\cdot \edges{\G}}
		\cdot
		\frac{\vertices{\G}!}{\aut{\G}}
		\cdot
		\frac{n^{\vertices{\G}} p^{\edges{\G}}}{\Psimin}
		\\
		&=
			c_{m+1}
			\cdot
			\frac{n^{(m+1) \cdot \vertices{\G}} \cdot p^{(m+1) \cdot \edges{\G}}}{\Psimin^{(m+1)-1}}
		.
	\end{align*}
	This finishes the proof of $(i)$.
	To prove the second part, we want to adapt our strategy from above.
	Let $M=(\graph_i)_{i=1}^m \subset \copies$ be a set of connected copies of size $m$,
	and $\hat M=(\hat \graph_i)_{i=1}^{\hat m} \subset \copies$ be a set of connected copies of size $\hat m$.
	Further, let
	$\bar M := ( \graph_1, \dots, \graph_m, \hat \graph_1, \dots, \hat \graph_{\hat m})$
	be the set created by joining $M$ and $\hat M$.
	If we could be sure that $\bar M$ is a set of connected copies, we could directly apply $(i)$.
	However, we can not generally assume that $\bar M$ is a set of connected copies.
	In fact, $\bar M$ is a set of connected copies of size $m + \hat m$
	if and only if $\hat \graph_1 \in \bigcup_{i=1}^{m} \copiesk{\graph_i}$.
	Hence, we have to modify our strategy from $(i)$ only with respect to $\hat H = \hat \graph_1 \cap \bigcup_{i=1}^{m} \copiesk{\graph_i}$,
	which can be empty in this case.
	Therefore, the lower bound
	$n^{\vertices{\hat H}} \cdot p^{\edges{ \hat H}} \geq \Psimin$
	does not necessarily hold.
	We have to take the case $\hat H = \emptyset$ into account, which results in
	$n^{\vertices{\hat H}} \cdot p^{\edges{ \hat H}} \geq \min\{\Psimin, 1\}$.
	This proves the second part of the statement.	
\end{proof}

Before we turn to the main proofs, we want to exemplify two bounding strategies in connection with the moment generating function.
We will repeatedly use these strategies, later.
\begin{lemma} \label{sg:lem:momgenfunc}
	Let $A_1, A_2 \subset \copies$ be sets of copies of $\G$,
	and let $F$ be a non-negative functional of $\{B_{\graph}\}_{\graph \in A_2}$,
	so that $F$ is a random variable independent from $\{B_{\graph}\}_{\graph \in \copies \backslash \hat A_2}$,
	where $\hat A_2 := \bigcup_{\graph \in A_2} \copiesk{\graph}$ is the set of all copies of $\G$ that depend on at least one element from $A_2$,
	in particular $A_2 \subset \hat A_2$.
	Then for all $t \geq 0$
	\begin{align*}
			\E\Big[
				F
				\cdot
				e^{
					\frac{t}{\sigma}
					\sum_{\graph \in \copies \backslash A_1}
					B_{\graph}^c
				}
			\Big]
		&\leq
			\E[ F ]
			\cdot
			e^{
				\frac{t}{\sigma}
				\vert A_1 \cup \hat A_2 \vert
			}
			\cdot
			\E [ e^{tW} ]
			.
	\intertext{In particular, there is}
			\E\Big[
				e^{
					\frac{t}{\sigma}
					\sum_{\graph \in \copies \backslash A_1}
					B_{\graph}^c
				}
			\Big]
		&\leq
			e^{
				\frac{t}{\sigma}
				\vert A_1\vert
			}
			\cdot
			\E [ e^{tW} ].
	\end{align*}
\end{lemma}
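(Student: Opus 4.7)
My plan is to first establish the ``in particular'' inequality and then bootstrap to the general statement. The reason for this order is that the general inequality ends up isolating a portion of the exponential sum on which no information about $F$ survives, and the ``in particular'' bound is exactly the tool to handle that standalone portion.

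For the ``in particular'' bound I will use the algebraic identity
\begin{align*}
	\sum_{\graph \in \copies \setminus A_1} B_\graph^c
	=
	\sigma W - \sum_{\graph \in A_1} B_\graph^c.
\end{align*}
Because $B_\graph \in \{0,1\}$ and $p^{|\graph|} \leq 1$, the pointwise estimate $-B_\graph^c = p^{|\graph|} - B_\graph \leq 1$ holds for every $\graph$, hence $e^{-\frac{t}{\sigma}\sum_{\graph \in A_1} B_\graph^c} \leq e^{\frac{t}{\sigma}|A_1|}$ for $t \geq 0$. Multiplying by $e^{tW}$ and taking expectations yields the claim.

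For the main inequality I will split $\copies \setminus A_1$ so as to separate the copies that ``see'' $F$ from those that do not. Set $E_2 := \bigcup_{\graph \in A_2}\graph$, so that $F$ is $\sigma(X_k : k \in E_2)$-measurable. A copy $\graph$ belongs to $\hat A_2$ exactly when $\graph \cap E_2 \neq \emptyset$, so every $\graph \in \copies \setminus \hat A_2$ has all edges in $E \setminus E_2$, and $B_\graph$ is then independent of $F$. Using the decomposition
\begin{align*}
	\sum_{\graph \in \copies \setminus A_1} B_\graph^c
	=
	\sum_{\graph \in \hat A_2 \setminus A_1} B_\graph^c
	+
	\sum_{\graph \in \copies \setminus (A_1 \cup \hat A_2)} B_\graph^c,
\end{align*}
I will bound the first sum pointwise by $|\hat A_2 \setminus A_1|$ via $B_\graph^c \leq 1$, pull $\E[F]$ out of the remaining expectation by the independence just established, and finally apply the ``in particular'' bound with $A_1$ replaced by $A_1 \cup \hat A_2$ to control the standalone factor by $e^{\frac{t}{\sigma}|A_1 \cup \hat A_2|}\E[e^{tW}]$. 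Reassembling the three contributions delivers the claimed inequality.

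The main obstacle is the book-keeping of independence. The family $\{B_\graph\}_{\graph \in \copies}$ is itself highly correlated, so the relevant independence has to be traced back to the underlying Rademacher variables $\{X_k\}_{k \in E}$. The purpose of enlarging $A_2$ to $\hat A_2$ in the hypothesis is precisely to gather every copy whose $B_\graph$ could possibly carry information about $F$; once the decomposition is read through this lens, the splitting above is natural and the reduction to the simpler inequality is immediate.
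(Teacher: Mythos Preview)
Your overall strategy matches the paper's: split $\copies\setminus A_1$ into $\hat A_2\setminus A_1$ and $\copies\setminus(A_1\cup\hat A_2)$, bound the first piece pointwise, use independence to pull out $\E[F]$, and then control the leftover exponential. The independence discussion is fine. However, your reassembly does \emph{not} give the stated bound.

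With the crude estimates $B_\graph^c\leq 1$ and $-B_\graph^c\leq 1$ you obtain, after applying your ``in particular'' inequality with $A_1$ replaced by $A_1\cup\hat A_2$,
\[
\E[F]\cdot e^{\frac{t}{\sigma}\,|\hat A_2\setminus A_1|}\cdot e^{\frac{t}{\sigma}\,|A_1\cup\hat A_2|}\cdot\E[e^{tW}],
\]
so the exponent is $|\hat A_2\setminus A_1|+|A_1\cup\hat A_2|=|A_1|+2|\hat A_2\setminus A_1|$, which exceeds the claimed $|A_1\cup\hat A_2|$ whenever $\hat A_2\not\subset A_1$. The copies in $\hat A_2\setminus A_1$ are effectively bounded twice: once upward in the first step and once downward (after being removed) in the second. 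The paper avoids this double count by using the sharp pointwise bounds $B_\graph^c\leq 1-p^{\edges{\G}}$ and $-B_\graph^c\leq p^{\edges{\G}}$ (both immediate from $B_\graph\in\{0,1\}$ and $\E[B_\graph]=p^{\edges{\G}}$). These give exponents $|\hat A_2\setminus A_1|(1-p^{\edges{\G}})$ and $|A_1\cup\hat A_2|\,p^{\edges{\G}}$, and since $|\hat A_2\setminus A_1|\leq|A_1\cup\hat A_2|$ their sum is at most $|A_1\cup\hat A_2|$. Replace your two crude bounds by these and the argument goes through.
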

\begin{proof}
	There is
	\begin{align*}
			\E\Big[
				F
				\cdot
				e^{
					\frac{t}{\sigma}
					\sum_{\graph \in \copies \backslash A_1}
					B_{\graph}^c
				}
			\Big]
		&=
			\E\Big[
				F
				\cdot
				e^{
					\frac{t}{\sigma}
					\sum_{\graph \in \hat A_2 \backslash A_1}
					B_{\graph}^c
				}
				\cdot
				e^{
					\frac{t}{\sigma}
					\sum_{\graph \in \copies \backslash (A_1 \cup \hat A_2)}
					B_{\graph}^c
				}
			\Big]
		\\
		&\leq
			\E\Big[
				F
				\cdot
				e^{
					\frac{t}{\sigma}
					\sum_{\graph \in \hat A_2 \backslash A_1}
					(1-p^{\edges{\G}})
				}
				\cdot
				e^{
					\frac{t}{\sigma}
					\sum_{\graph \in \copies \backslash (A_1 \cup \hat A_2)}
					B_{\graph}^c
				}
			\Big]
		\\
		&=
			\E[ F ]
			\cdot
			e^{
				\frac{t}{\sigma}
				\vert \hat A_2 \backslash A_1 \vert
				(1-p^{\edges{\G}})
			}
			\cdot
			\E\Big[
				e^{
					\frac{t}{\sigma}
					\sum_{\graph \in \copies \backslash (A_1 \cup \hat A_2)}
					B_{\graph}^c
				}
			\Big]
		.
	\end{align*}
	Further,
	\begin{align*}
			\E\Big[
				e^{
					\frac{t}{\sigma}
					\sum_{\graph \in \copies \backslash (A_1 \cup \hat A_2)}
					B_{\graph}^c
				}
			\Big]
		&=
			\E\Big[
				e^{
					-\frac{t}{\sigma}
					\sum_{\graph \in A_1 \cup \hat A_2}
					B_{\graph}^c
				}
				\cdot
				e^{
					tW
				}
			\Big]
		\leq
		%	\E\Big[
		%		e^{
		%			\frac{t}{\sigma}
		%			\sum_{\graph \in A_1 \cup \hat A_2}
		%			p^{\edges{\G}}
		%		}
		%		\cdot
		%		e^{
		%			tW
		%		}
		%	\Big]
		%=
			e^{
				\frac{t}{\sigma}
				\vert A_1 \cup \hat A_2\vert
				p^{\edges{\G}}
			}
			\cdot
			\E [ e^{tW} ].
	\end{align*}
	This proves the first inequality.
	The second inequality follows from the special case of $F = 1$ with $A_2 = \emptyset$.
\end{proof}

We are now prepared to prove Theorem~\ref{sg:theo:main} and Corollary~\ref{sg:cor:main}.
\begin{proof}[Proof of Theorem~\ref{sg:theo:main}]
	This proof is based on our theoretical results presented in Theorem~\ref{Theorem:MD_Rademacher-short}.
	Since $W$ depends on only finitely many Rademacher random variables
	we may apply this theorem as long as we find suitable functions
	$\gamma_1$ and $\gamma_2$.
	In the application of this theorem,
	the quantity $U = (U_k)_{k\in E}$ with
	\begin{align}
		U_k
		:={}& \frac{1}{\sqrt{pq}} \cdot D_kW \cdot \vert D_k L^{-1} W \vert \nonumber\\
		={}& \frac{1}{\sqrt{pq}} \cdot D_kW \cdot (-D_k L^{-1} W), \label{sg:const:Uk}
	\end{align}
	$k \in E$,
	is of special importance,
	where 
	$\vert D_k L^{-1} W \vert = - D_k L^{-1} W $
	holds true due to the results from Lemma~\ref{sg:lem:LDk}.
	
	Due to the application of the discrete gradient,
	$U_k$ does not depend on $Y_k$.
	For this special case it is known by Corollary~9.9 in \cite{Pr08}
	that $\delta(U) = \sum_{k \in E} Y_k U_k$.
	We further note that
	$\E[ \sqrt{pq}\sum_{k\in E} U_k ] = 1$,
	which is implied by (2.13) in \cite{KRT17} in case of $f$ being the identity.
	
	These considerations allow us to rephrase Theorem~\ref{Theorem:MD_Rademacher}
	with the notation introduced in \eqref{sg:const:Uk}:
	To apply this theorem, we need to construct functions $\gamma_1$ and $\gamma_2$ so that
	\makeatletter\tagsleft@true\makeatother
	\begin{align}
		\E\Big[
			\Big\vert
				\sqrt{pq}\sum_{k\in E} U_k^c
			\Big\vert
			\cdot e^{tW}
		\Big]
		&\leq
			\gamma_1(t) \cdot \E[e^{tW}]
		,
		\tag{A1'}\label{sg:A1}
		\\
		\E\Big[
			\Big\vert
				\sum_{k \in E} Y_k U_k
			\Big\vert
			\cdot e^{tW}
		\Big]
		&\leq
			\gamma_2(t) \cdot \E[e^{tW}]
		,
		\tag{A2'}\label{sg:A2}
	\end{align}
	\makeatletter\tagsleft@false\makeatother
	for $0 \leq t \leq A$, where $A \geq 0$.
	
	For the construction of suitable $\gamma_1$ and $\gamma_2$,
	a further decomposition of $(U_k)_{k\in E}$ will be usefull,
	which can be achieved by application of Lemma~\ref{sg:lem:LDk}:
	We decompose
	\begin{align}
		U_k
		&= \frac{1}{\sqrt{pq}} \cdot D_kW \cdot (-D_k L^{-1} W) \nonumber\\
		&= \frac{1}{\sqrt{pq} \cdot \sigma^2} \cdot
			\sum_{\graph_1,\graph_2 \in \copies}
				D_k B_{\graph_1}^c
				\cdot \big( -D_k L^{-1} B_{\graph_2}^c \big) \nonumber\\
		&= \frac{\sqrt{pq}}{\sigma^2} \cdot
			\sum_{\graph_1,\graph_2 \in \copiesk{k}}
			\sum_{\{k\} \subset \alpha_2 \subset \graph_2 }
				\frac{
					p^{\vert \graph_2 \vert - \vert \alpha_2 \vert}
				}{
					\vert \graph_2 \vert
					\cdot
					\binom{\vert\graph_2\vert - 1}{\vert\alpha_2\vert - 1}
				}
				B_{(\graph_1 \cup \alpha_2) \backslash \{ k \} } \nonumber\\
		&= \frac{\sqrt{pq}}{\sigma^2} \cdot
			\sum_{\graph_1,\graph_2 \in \copiesk{k}}
			\V{k}{1}{2}
			\label{sg:eq:UV}
	\intertext{with}
		\V{k}{1}{2}
		&:= \sum_{\{k\} \subset \alpha_2 \subset \graph_2 }
				\frac{
					p^{\vert \graph_2 \vert - \vert \alpha_2 \vert}
				}{
					\vert \graph_2 \vert
					\cdot
					\binom{\vert\graph_2\vert - 1}{\vert\alpha_2\vert - 1}
				}
				B_{(\graph_1 \cup \alpha_2) \backslash \{ k \} }
		\geq 0.
		\label{sg:const:V}
	\end{align}

	\textbf{Construction of $\gamma_1$}:
	To verify that assumption \eqref{sg:A1} can be fulfilled with some function $\gamma_1$%
	---that is yet to be constructed---%
	we start by applying the Cauchy--Schwarz inequality and using the decomposition of $U_k$ from \eqref{sg:eq:UV}.
	This results in
	\begin{align*}
		\E\Big[
			\Big\vert
				\sqrt{pq}\sum_{k\in E} U_k^c
			\Big\vert
			\cdot e^{tW}
		\Big]
		&\leq
			\E\Big[
				\Big(
					\sqrt{pq}
					\sum_{k\in E} U_k^c
				\Big)^2
				e^{tW}
			\Big]^{\frac12}
			\cdot \E[e^{tW}]^{\frac12}
		\\
		&=
			\Big(
				pq
				\sum_{k,\ell\in E}
				\E\Big[
					U_k^c U_\ell^c
					e^{tW}
				\Big]
			\Big)^{\frac12}
			\cdot \E[e^{tW}]^{\frac12}
		\\
		&=
			\bigg(
				\frac{p^2q^2}{\sigma^4} \cdot
				\sum_{k,\ell\in E}
				\sum_{\graph_1,\graph_2 \in \copiesk{k}}
				\sum_{\graph_3,\graph_4 \in \copiesk{\ell}}
					\E\Big[
						\V{k}{1}{2}^c
						\V{\ell}{3}{4}^c
						e^{tW}
					\Big]
			\bigg)^{\frac12}
			\cdot \E[e^{tW}]^{\frac12}
	\end{align*}
	for all $t \geq 0$.
	Next, we apply the iterated Taylor expansion that was introduced in \eqref{ITE-formula}
	to $e^{tW}$ by choosing
	$x_\graph = \frac{t}{\sigma} B_{\graph}^c$
	and
	$y_\graph = \frac{t}{\sigma}
				\sum_{\graph_5 \in \copiesk{\graph} \backslash
					\copiesk{\graph_1 \cup \graph_2 \cup \graph_3 \cup \graph_4 }}
				B_{\graph_5}^c$
	for $\graph \in I = \copiesk{\graph_1 \cup \graph_2 \cup \graph_3 \cup \graph_4}$,
	and
	$z = \frac{t}{\sigma}
			\sum_{\graph \in \copies \backslash
					\copiesk{\graph_1 \cup \graph_2 \cup \graph_3 \cup \graph_4 }}
				B_{\graph}^c$.
	Due to the underlying dependency structure
	this expansion results in
	\begin{align*}
			\frac{p^2q^2}{\sigma^4} \cdot
			\sum_{k,\ell\in E}
			\sum_{\graph_1,\graph_2 \in \copiesk{k}}
			\sum_{\graph_3,\graph_4 \in \copiesk{\ell}}
				\E\Big[
					\V{k}{1}{2}^c
					\V{\ell}{3}{4}^c
					e^{tW}
				\Big]
		&=
			S_{1,1}(t) + S_{1,2}(t) + S_{1,3}(t) + S_{1,4}(t)
	\end{align*}
	with
	\begin{align*}
		S_{1,1}(t) &:=
			\frac{p^2q^2}{\sigma^4}
			\cdot
			\sum_{\mathclap{\substack{
				k,\ell\in E\\
				\graph_1,\graph_2 \in \copiesk{k}\\
				\graph_3,\graph_4 \in \copiesk{\ell}
			}}}
			\E\Big[
				\V{k}{1}{2}^c
				\V{\ell}{3}{4}^c
			\Big]
			\cdot
			\E\Big[
				e^{
					\frac{t}{\sigma}
					\sum_{\graph \in \copies \backslash
						\copiesk{\graph_1 \cup \graph_2 \cup \graph_3 \cup \graph_4 }}
					B_{\graph}^c
				}
			\Big]
			,\\[1em]
		S_{1,2}(t) &:=
			\frac{p^2q^2}{\sigma^4}
			\cdot
			\frac{t}{\sigma}
			\cdot
			\sum_{\mathclap{\substack{
				k,\ell\in E\\
				\graph_1,\graph_2 \in \copiesk{k}\\
				\graph_3,\graph_4 \in \copiesk{\ell}\\
				\graph_5 \in \copiesk{\graph_1 \cup \graph_2 \cup \graph_3 \cup \graph_4}
			}}}
			\E\Big[
				\V{k}{1}{2}^c
				\V{\ell}{3}{4}^c
				\cdot
				B_{\graph_5}^c
			\Big]
			\cdot
			\E\Big[
				e^{
					\frac{t}{\sigma}
					\sum_{\graph \in \copies \backslash
						\copiesk{\graph_1 \cup \graph_2 \cup \graph_3 \cup \graph_4 \cup \graph_5 }}
					B_{\graph}^c
				}
			\Big]
			,\\[1em]
		S_{1,3}(t) &:=
			\frac{p^2q^2}{\sigma^4}
			\cdot
			\frac{t^2}{\sigma^2}
			\cdot
			\sum_{\mathclap{\substack{
				k,\ell\in E\\
				\graph_1,\graph_2 \in \copiesk{k}\\
				\graph_3,\graph_4 \in \copiesk{\ell}\\
				\graph_5 \in \copiesk{\graph_1 \cup \graph_2 \cup \graph_3 \cup \graph_4} \\
				\graph_6 \in \copiesk{\graph_5} \backslash
					\copiesk{\graph_1 \cup \graph_2 \cup \graph_3 \cup \graph_4}
			}}}
			\E\Bigg[
				\V{k}{1}{2}^c
				\V{\ell}{3}{4}^c
				\cdot
				B_{\graph_5}^c
				B_{\graph_6}^c
				\cdot
				r_1 \Big(
					\frac{t}{\sigma}
					\;\;\;\;\;\;\;\;\;\;\;\;
					\sum_{\mathclap{\graph \in \copiesk{\graph_5} \backslash
						\copiesk{\graph_1 \cup \graph_2 \cup \graph_3 \cup \graph_4 }}}
					B_{\graph}^c
					\;\;\;\;\;\;\;
					\Big)
				\\[-2\baselineskip]&\hspace{9cm}
				\cdot
				e^{
					\frac{t}{\sigma}
					\sum_{\graph \in \copies \backslash
						\copiesk{\graph_1 \cup \graph_2 \cup \graph_3 \cup \graph_4 \cup \graph_5}}
					B_{\graph}^c
				}
			\Bigg]
			,\\[0.5em]
		S_{1,4}(t) &:=
			\frac{p^2q^2}{\sigma^4}
			\cdot
			\frac{t^2}{2\sigma^2}
			\cdot
			\sum_{\mathclap{\substack{
				k,\ell\in E\\
				\graph_1,\graph_2 \in \copiesk{k}\\
				\graph_3,\graph_4 \in \copiesk{\ell}\\
				\graph_5, \graph_6 \in \copiesk{\graph_1 \cup \graph_2 \cup \graph_3 \cup \graph_4 }
			}}}
			\E\Bigg[
				\V{k}{1}{2}^c
				\V{\ell}{3}{4}^c
				\cdot
				B_{\graph_5}^c
				B_{\graph_6}^c
				\cdot
				r_2 \Big(
					\frac{t}{\sigma}
					\;\;\;\;\;\;\;\;
					\sum_{\mathclap{\graph \in
						\copiesk{\graph_1 \cup \graph_2 \cup \graph_3 \cup \graph_4 }}}
					B_{\graph}^c
					\;\;\;
					\Big)
				\\[-1.5\baselineskip]&\hspace{9cm}
				\cdot
				e^{
					\frac{t}{\sigma}
					\sum_{\graph \in \copies \backslash
							\copiesk{\graph_1 \cup \graph_2 \cup \graph_3 \cup \graph_4 }}
					B_{\graph}^c
				}
			\Bigg]
		,
	\end{align*}
	so that
	\begin{align*}
		\E\Big[
			\Big\vert
				\sqrt{pq}\sum_{k\in E} U_k^c
			\Big\vert
			\cdot e^{tW}
		\Big]
		&\leq
			\big(
				S_{1,1}(t) + S_{1,2}(t) + S_{1,3}(t) + S_{1,4}(t)
			\big)^{\frac12}
			\cdot \E[e^{tW}]^{\frac12}
		.
	\end{align*}
	The advantage of this expansion based on \cite{Z19} is
	that $S_{1,1}$ and $S_{1,2}$ each possess a comfortable dependency structure
	that allowed us to split the expectations,
	while $S_{1,3}$ and $S_{1,4}$ are of second order and therefore allow more rough estimates
	without a loss in the resulting rate,
	as we will see later in this proof.
	
	We now have to derive bounds for $S_{1,1}$, $S_{1,2}$, $S_{1,3}$, and $S_{1,4}$.
	
	To derive a bound for $S_{1,1}$,
	we first fix $k,\ell\in E$, $\graph_1,\graph_2 \in \copiesk{k}$, and $\graph_3,\graph_4 \in \copiesk{\ell}$.
	If $\graph_1 \cup \graph_2$ is disjoint from $\graph_3 \cup \graph_4$,
	then $\V{k}{1}{2}$ and $\V{\ell}{3}{4}$ are independent, so that
	$\E\Big[ \V{k}{1}{2}^c \V{\ell}{3}{4}^c \Big] = 0$.
	Otherwise, using our knowledge from Lemma~\ref{sg:lem:correlation}
	about the correlation behavior of the random variables $\{B_A\}_{A \subset E}$
	we see that $\E [ \V{k}{1}{2}^c \V{\ell}{3}{4}^c ] \geq 0$ and
	\begin{align*}
		& \E\Big[
			\V{k}{1}{2}^c
			\V{\ell}{3}{4}^c
		\Big]\\
		&= \sum_{\substack{
					\{k   \} \subset \alpha_2 \subset \graph_2 \\
					\{\ell\} \subset \alpha_4 \subset \graph_4
				}}
				\frac{
					\E[
							B_{\graph_2 \backslash \alpha_2}
					]
				}{
					\vert \graph_2 \vert
					\cdot
					\binom{\vert\graph_2\vert - 1}{\vert\alpha_2\vert - 1}
				}
				\frac{
					\E[
							B_{\graph_4 \backslash \alpha_4}
					]
				}{
					\vert \graph_4 \vert
					\cdot
					\binom{\vert\graph_4\vert - 1}{\vert\alpha_4\vert - 1}
				}
			\E\Big[
					B_{(\graph_1 \cup \alpha_2) \backslash \{ k \} }^c
					B_{(\graph_3 \cup \alpha_4) \backslash \{ \ell \} }^c
			\Big]\\
		&\leq \sum_{\substack{
					\{k   \} \subset \alpha_2 \subset \graph_2 \\
					\{\ell\} \subset \alpha_4 \subset \graph_4
				}}
				\frac{
					1
				}{
					\vert \graph_2 \vert
					\cdot
					\binom{\vert\graph_2\vert - 1}{\vert\alpha_2\vert - 1}
				}
				\frac{
					1
				}{
					\vert \graph_4 \vert
					\cdot
					\binom{\vert\graph_4\vert - 1}{\vert\alpha_4\vert - 1}
				}
			\E\Big[
					B_{
						\big( (\graph_1 \cup \alpha_2) \backslash \{ k \} \big)
							\cup
						(\graph_2 \backslash \alpha_2)
							\cup
						\big( (\graph_3 \cup \alpha_4) \backslash \{ \ell \} \big)
							\cup
						(\graph_4 \backslash \alpha_4)
					}
			\Big]\\
		&\leq \sum_{\substack{
					\{k   \} \subset \alpha_2 \subset \graph_2 \\
					\{\ell\} \subset \alpha_4 \subset \graph_4
				}}
				\frac{
					1
				}{
					\vert \graph_2 \vert
					\cdot
					\binom{\vert\graph_2\vert - 1}{\vert\alpha_2\vert - 1}
				}
				\frac{
					1
				}{
					\vert \graph_4 \vert
					\cdot
					\binom{\vert\graph_4\vert - 1}{\vert\alpha_4\vert - 1}
				}
				p^{\vert \graph_1 \cup \graph_2 \cup \graph_3 \cup \graph_4 \vert - 2}
		\;=\;		p^{\vert \graph_1 \cup \graph_2 \cup \graph_3 \cup \graph_4 \vert - 2},
	\end{align*}
	where in the last step we used what we already noted in Remark~\ref{sg:rem:1}.
	The second inequality from Lemma~\ref{sg:lem:momgenfunc} implies
	\begin{align*}
			\E\Big[
				e^{
					\frac{t}{\sigma}
					\sum_{\graph \in \copies \backslash
						\copiesk{\graph_1 \cup \graph_2 \cup \graph_3 \cup \graph_4 }}
					B_{\graph}^c
				}
			\Big]
		&\leq
			e^{
				\frac{t}{\sigma}
				\vert \copiesk{\graph_1 \cup \graph_2 \cup \graph_3 \cup \graph_4 } \vert
			}
			\cdot
			\E [ e^{tW} ]
		\leq
			e^{
				\frac{4Dt}{\sigma}
			}
			\cdot
			\E [ e^{tW} ]
		.
	\end{align*}
	Putting these partial results together yields $S_{1,1}(t) \geq 0$ and
	\begin{align*}
		S_{1,1}(t)
		&\leq
			\frac{p^2q^2}{\sigma^4}
			\cdot
			\sum_{\mathclap{\substack{
				k,\ell\in E\\
				\graph_1,\graph_2 \in \copiesk{k}\\
				\graph_3,\graph_4 \in \copiesk{\ell}
			}}}
			\1{(\graph_1 \cup \graph_2) \cap (\graph_3 \cup \graph_4) \neq \emptyset}
			p^{\vert \graph_1 \cup \graph_2 \cup \graph_3 \cup \graph_4 \vert - 2}
			\cdot
			e^{
				\frac{4Dt}{\sigma}
			}
			\cdot
			\E [ e^{tW} ]
			\\
		&\leq
			\frac{q^2}{\sigma^4}
			\cdot
			2\sum_{\mathclap{\substack{
				\graph_1 \in \copies\\
				\graph_2 \in \copiesk{\graph_1}\\
				\graph_3 \in \copiesk{\graph_1 \cup \graph_2}\\
				\graph_4 \in \copiesk{\graph_3}
			}}}
			\;\;\;\;\;\;
			\sum_{\mathclap{\substack{ k \in \graph_1 \cap \graph_2 \\ \ell \in \graph_3 \cap \graph_4 }}}
			p^{\vert \graph_1 \cup \graph_2 \cup \graph_3 \cup \graph_4 \vert}
			\cdot
			e^{
				\frac{4Dt}{\sigma}
			}
			\cdot
			\E [ e^{tW} ]
			,
	\end{align*}
	where we used
	that due to symmetry the sum over $\1{(\graph_1 \cup \graph_2) \cap (\graph_3 \cup \graph_4) \neq \emptyset}$
	is smaller than or equal to $2$ times the sum over
	$\1{(\graph_1 \cup \graph_2) \cap \graph_3 \neq \emptyset} = \1{\graph_3 \in \copiesk{\graph_1 \cup \graph_2}}$.
	We further reordered the indices to point out
	that the first sum runs over a subset of all sets of connected copies of size $4$
	in the sense of Definition~\ref{sg:defn:connected},
	while the second sum runs over not more than $\edges{\G}^2$ summands.
	Lemma~\ref{sg:lem:connected} therefore yields
	\begin{align*}
		S_{1,1}(t)
		&\leq
			\frac{q^2}{\sigma^4}
			\cdot
			2
			\cdot
			\frac{ (\vertices{\G}!)^{3} \cdot 2^{6\edges{\G}} }{\aut{\G}^4}
			\cdot
			\frac{n^{4\vertices{\G}} \cdot p^{4\edges{\G}}}{\Psimin^{3}}
			\cdot
			\edges{\G}^2
			\cdot
			e^{
				\frac{4Dt}{\sigma}
			}
			\cdot
			\E [ e^{tW} ]
			\\
		&\leq
			c_{1,1}
			\cdot
			\frac{1}{\Psimin}
			\cdot
			e^{
				\frac{4Dt}{\sigma}
			}
			\cdot
			\E [ e^{tW} ]
	\end{align*}
	with $\displaystyle c_{1,1} :=
		\frac{ 2^{3 + 6\edges{\G}} \cdot (\vertices{\G}!)^5 \cdot \edges{\G}^2}{\aut{\G}^2}$.
	For the last step we used the lower bound for $\sigma$ presented in \eqref{sg:ineq:sigma},
	which we may use since we assume that $n \geq 4\vertices{\G}^2$.
	
	The calculations for $S_{1,2}$ are largely analogous,
	except for we have to use part $(iii)$ instead of part $(ii)$ from Lemma~\ref{sg:lem:correlation}.
	We find
	\begin{align*}
		\vert S_{1,2}(t) \vert
		&\leq
			\frac{p^2q^2}{\sigma^4}
			\cdot
			\frac{t}{\sigma}
			\cdot
			2\sum_{\mathclap{\substack{
				k,\ell\in E\\
				\graph_1,\graph_2 \in \copiesk{k}\\
				\graph_3,\graph_4 \in \copiesk{\ell}\\
				\graph_5 \in \copiesk{\graph_3 \cup \graph_4}
			}}}
			\E\Big[ \Big\vert
				\V{k}{1}{2}^c
				\V{\ell}{3}{4}^c
				\cdot
				B_{\graph_5}^c
			\big\vert \Big]
			\cdot
			\E\Big[
				e^{
					\frac{t}{\sigma}
					\sum_{\graph \in \copies \backslash
						\copiesk{\graph_1 \cup \graph_2 \cup \graph_3 \cup \graph_4 \cup \graph_5 }}
					B_{\graph}^c
				}
			\Big]\\
		&\leq
			\frac{p^2q^2}{\sigma^4}
			\cdot
			\frac{t}{\sigma}
			\cdot
			2\sum_{\mathclap{\substack{
				k,\ell\in E\\
				\graph_1,\graph_2 \in \copiesk{k}\\
				\graph_3,\graph_4 \in \copiesk{\ell}\\
				\graph_5 \in \copiesk{\graph_3 \cup \graph_4}
			}}}
			\1{(\graph_1 \cup \graph_2) \cap (\graph_3 \cup \graph_4 \cup \graph_5) \neq \emptyset}
			2^3
			p^{\vert \graph_1 \cup \graph_2 \cup \graph_3 \cup \graph_4 \cup \graph_5 \vert - 2}
			\cdot
			e^{
				\frac{t}{\sigma}
				\vert \copiesk{\graph_1 \cup \graph_2 \cup \graph_3 \cup \graph_4 \cup \graph_5 } \vert
			}
			\cdot
			\E [ e^{tW} ]
			\\
		&\leq
			\frac{q^2}{\sigma^4}
			\cdot
			\frac{t}{\sigma}
			\cdot
			2^4\bigg(\;
				2\sum_{\mathclap{\substack{
					\graph_1 \in \copies\\
					\graph_2 \in \copiesk{\graph_1}\\
					\graph_3 \in \copiesk{\graph_1 \cup \graph_2}\\
					\graph_4 \in \copiesk{\graph_3}\\
					\graph_5 \in \copiesk{\graph_3 \cup \graph_4}
				}}}
				\;\;\;\;\;\;
				\sum_{\mathclap{\substack{ k \in \graph_1 \cap \graph_2 \\ \ell \in \graph_3 \cap \graph_4 }}}
				p^{\vert \graph_1 \cup \graph_2 \cup \graph_3 \cup \graph_4 \cup \graph_5 \vert}
			+
				2\sum_{\mathclap{\substack{
					\graph_1 \in \copies\\
					\graph_2 \in \copiesk{\graph_1}\\
					\graph_5 \in \copiesk{\graph_1 \cup \graph_2}\\
					\graph_3 \in \copiesk{\graph_5}\\
					\graph_4 \in \copiesk{\graph_3}
				}}}
				\;\;\;\;\;\;
				\sum_{\mathclap{\substack{ k \in \graph_1 \cap \graph_2 \\ \ell \in \graph_3 \cap \graph_4 }}}
				p^{\vert \graph_1 \cup \graph_2 \cup \graph_3 \cup \graph_4 \cup \graph_5 \vert}
			\bigg)
			\cdot
			e^{
				\frac{5Dt}{\sigma}
			}
			\cdot
			\E [ e^{tW} ]
			\\%[1.7em]
		&\leq
			\frac{q^2}{\sigma^4}
			\cdot
			\frac{t}{\sigma}
			\cdot
			2^6
			\cdot
			\frac{ (\vertices{\G}!)^{4} \cdot 2^{10\edges{\G}} }{\aut{\G}^5}
			\cdot
			\frac{n^{5\vertices{\G}} \cdot p^{5\edges{\G}}}{\Psimin^{4}}
			\cdot
			\edges{\G}^2
			\cdot
			e^{
				\frac{5Dt}{\sigma}
			}
			\cdot
			\E [ e^{tW} ]
			\\
		&\leq
			c_{1,2}
			\cdot
			\frac{t}{\sqrt{q\Psimin}}
			\cdot
			\frac{1}{\Psimin}
			\cdot
			e^{
				\frac{5Dt}{\sigma}
			}
			\cdot
			\E [ e^{tW} ]
	\end{align*}
	with $\displaystyle c_{1,2} :=
		\frac{ 2^{\frac{17}2 + 10\edges{\G}} \cdot (\vertices{\G}!)^{\frac{13}2} \cdot \edges{\G}^2}{\aut{\G}^\frac52}$.
	
	To construct a bound for $S_{1,3}$ we have to slightly modify our approach.
	Due to symmetry, we can rename the variables if necessary to make sure that $\graph_5 \in \copiesk{\graph_3 \cup \graph_4}$.
	However, due to the influence of the remainder function $r_1$
	the relevant expectation can be non-zero even if $\graph_1 \cup \graph_2$ and $\graph_3 \cup \graph_4 \cup \graph_5 \cup \graph_6$ are disjoint.
	This disadvantage can later be compensated for by using the additional $\sigma^{-2}$.
	We start by taking the absolute to get rid of the remainder $r_1$.
	Recall that $t \geq 0$ and $r_1(x) \leq e^{\vert x \vert}$ for all $x \in \R$,
	so that
	\begin{align*}
			\Big\vert
			r_1 \Big(
				\frac{t}{\sigma}
				\;\;\;\;\;\;\;\;\;\;\;\;\;
				\sum_{\mathclap{\graph \in \copiesk{\graph_5} \backslash
					\copiesk{\graph_1 \cup \graph_2 \cup \graph_3 \cup \graph_4 }}}
				B_{\graph}^c
				\;\;\;\;\;\;\;
				\Big)
			\Big\vert
		&\leq
			e^{
				\frac{Dt}{\sigma}
			}
		.
	\end{align*}
	Hence, by application of
	Lemma~\ref{sg:lem:momgenfunc} and
	Lemma~\ref{sg:lem:correlation},
	there is
	\begin{align*}
		\vert S_{1,3}(t) \vert
		 &\leq
			\frac{p^2q^2}{\sigma^4}
			\cdot
			\frac{t^2}{\sigma^2}
			\cdot
			2\sum_{\mathclap{\substack{
				k,\ell\in E\\
				\graph_1,\graph_2 \in \copiesk{k}\\
				\graph_3,\graph_4 \in \copiesk{\ell}\\
				\graph_5 \in \copiesk{\graph_3 \cup \graph_4} \\
				\graph_6 \in \copiesk{\graph_5} \backslash
					\copiesk{\graph_1 \cup \graph_2 \cup \graph_3 \cup \graph_4}
			}}}
			\E\bigg[
				\big\vert
				\V{k}{1}{2}^c
				\V{\ell}{3}{4}^c
				\cdot
				B_{\graph_5}^c
				B_{\graph_6}^c
				\big\vert
				\cdot
				e^{
					\frac{Dt}{\sigma}
				}
				\cdot
				e^{
					\frac{t}{\sigma}
					\sum_{\graph \in \copies \backslash
						\copiesk{\graph_1 \cup \graph_2 \cup \graph_3 \cup \graph_4 \cup \graph_5}}
					B_{\graph}^c
				}
			\bigg]
			\\
		 &\leq
			\frac{p^2q^2}{\sigma^4}
			\cdot
			\frac{t^2}{\sigma^2}
			\cdot
			2\sum_{\mathclap{\substack{
				\graph_1 \in \copies\\
				\graph_2 \in \copiesk{\graph_1}\\
				\graph_3 \in \copies\\
				\graph_4 \in \copiesk{\graph_3}\\
				\graph_5 \in \copiesk{\graph_3 \cup \graph_4} \\
				\graph_6 \in \copiesk{\graph_5} \backslash
					\copiesk{\graph_1 \cup \graph_2 \cup \graph_3 \cup \graph_4}
			}}}
			\;\;\;\;\;\;
			\sum_{\mathclap{\substack{ k \in \graph_1 \cap \graph_2 \\ \ell \in \graph_3 \cap \graph_4 }}}
			\E\Big[
				\big\vert
				\V{k}{1}{2}^c
				\V{\ell}{3}{4}^c
				\cdot
				B_{\graph_5}^c
				B_{\graph_6}^c
				\big\vert
			\Big]
			\cdot
			e^{
				\frac{Dt}{\sigma}
			}
			\cdot
			e^{
				\frac{6Dt}{\sigma}
			}
			\cdot
			\E[e^{tW}]
			\\
		 &\leq
			\frac{q^2}{\sigma^4}
			\cdot
			\frac{t^2}{\sigma^2}
			\cdot
			2\sum_{\mathclap{\substack{
				\graph_1 \in \copies\\
				\graph_2 \in \copiesk{\graph_1}\\
				\graph_3 \in \copies\\
				\graph_4 \in \copiesk{\graph_3}\\
				\graph_5 \in \copiesk{\graph_3 \cup \graph_4} \\
				\graph_6 \in \copiesk{\graph_5}
			}}}
			\edges{\G}^2
			2^{4}
			p^{\vert \graph_1 \cup \graph_2 \cup \graph_3 \cup \graph_4 \cup \graph_5 \cup \graph_6 \vert}
			\cdot
			e^{
				\frac{7Dt}{\sigma}
			}
			\cdot
			\E[e^{tW}]
		.
	\end{align*}
	As already mentioned, we can not be sure that
	$\graph_1 \cup \graph_2$ and $\graph_3 \cup \graph_4 \cup \graph_5 \cup \graph_6$ are not disjoint.
	Therefore, in the sum above $\graph_3$ runs over the the complete set $\copies$,
	so that $(\graph_1,\graph_2)$ and $(\graph_3,\graph_4,\graph_5,\graph_6)$ are sets of connected copies
	while in general their union $(\graph_1,\graph_2,\graph_3,\graph_4,\graph_5,\graph_6)$ is not.
	This case is covered by the second inequality from Lemma~\ref{sg:lem:connected},
	so that
	\begin{align*}
		\vert S_{1,3}(t) \vert
		 &\leq
			\frac{q^2}{\sigma^4}
			\cdot
			\frac{t^2}{\sigma^2}
			\cdot
			2^{5}
			\cdot
			\frac{ (\vertices{\G}!)^{5} \cdot 2^{15\edges{\G}} }{\aut{\G}^6}
			\cdot
			\frac{n^{6\vertices{\G}} \cdot p^{6\edges{\G}}}{\Psimin^{4} \cdot \min\{ \Psimin , 1 \} }
			\cdot
			\edges{\G}^2
			\cdot
			e^{
				\frac{7Dt}{\sigma}
			}
			\cdot
			\E[e^{tW}]
			\\
		&\leq
			c_{1,3}
			\cdot
			\frac{t^2}{q \min\{ \Psimin , 1 \}}
			\cdot
			\frac{1}{\Psimin}
			\cdot
			e^{
				\frac{7Dt}{\sigma}
			}
			\cdot
			\E [ e^{tW} ]
	\end{align*}
	with $\displaystyle c_{1,3} :=
		\frac{ 2^{8 + 15\edges{\G}} \cdot (\vertices{\G}!)^8 \cdot \edges{\G}^2}{\aut{\G}^3}$.
		
	The calculations for $S_{1,4}$ are largely analogous to $S_{1,3}$ again.
	We get
	\begin{align*}
		\vert S_{1,4}(t) \vert
		&\leq
			\frac{p^2q^2}{\sigma^4}
			\cdot
			\frac{t^2}{2\sigma^2}
			\cdot
			\sum_{\mathclap{\substack{
				k,\ell\in E\\
				\graph_1,\graph_2 \in \copiesk{k}\\
				\graph_3,\graph_4 \in \copiesk{\ell}\\
				\graph_5, \graph_6 \in \copiesk{\graph_1 \cup \graph_2 \cup \graph_3 \cup \graph_4 }
			}}}
			\E\bigg[
				\big\vert
				\V{k}{1}{2}^c
				\V{\ell}{3}{4}^c
				\cdot
				B_{\graph_5}^c
				B_{\graph_6}^c
				\big\vert
				\cdot
				e^{
					\frac{4Dt}{\sigma}
				}
				\cdot
				e^{
					\frac{t}{\sigma}
					\sum_{\graph \in \copies \backslash
						\copiesk{\graph_1 \cup \graph_2 \cup \graph_3 \cup \graph_4}}
					B_{\graph}^c
				}
			\bigg]
			\\
		 &\leq
			\frac{p^2q^2}{\sigma^4}
			\cdot
			\frac{t^2}{2\sigma^2}
			\cdot
			\sum_{\mathclap{\substack{
				\graph_1 \in \copies\\
				\graph_2 \in \copiesk{\graph_1}\\
				\graph_3 \in \copies\\
				\graph_4 \in \copiesk{\graph_3}\\
				\graph_5, \graph_6 \in \copiesk{\graph_1 \cup \graph_2 \cup \graph_3 \cup \graph_4 }
			}}}
			\;\;\;\;\;\;
			\sum_{\mathclap{\substack{ k \in \graph_1 \cap \graph_2 \\ \ell \in \graph_3 \cap \graph_4 }}}
			\E\Big[
				\big\vert
				\V{k}{1}{2}^c
				\V{\ell}{3}{4}^c
				\cdot
				B_{\graph_5}^c
				B_{\graph_6}^c
				\big\vert
			\Big]
			\cdot
			e^{
				\frac{4Dt}{\sigma}
			}
			\cdot
			e^{
				\frac{6Dt}{\sigma}
			}
			\cdot
			\E[e^{tW}]
			\\
		 &\leq
			\frac{q^2}{\sigma^4}
			\cdot
			\frac{t^2}{2\sigma^2}
			%\cdot
			\bigg(
				2\sum_{\mathclap{\substack{
					\graph_1 \in \copies\\
					\graph_2 \in \copiesk{\graph_1}\\
					\graph_3 \in \copies\\
					\graph_4 \in \copiesk{\graph_3}\\
					\graph_5, \graph_6 \in \copiesk{\graph_3 \cup \graph_4 }
				}}}
				\edges{\G}^2
				2^{4}
				p^{\vert \graph_1 \cup \graph_2 \cup \graph_3 \cup \graph_4 \cup \graph_5 \cup \graph_6 \vert}
			+
				2\sum_{\mathclap{\substack{
					\graph_1 \in \copies\\
					\graph_2 \in \copiesk{\graph_1}\\
					\graph_3 \in \copies\\
					\graph_4 \in \copiesk{\graph_3}\\
					\graph_5 \in \copiesk{\graph_1 \cup \graph_2 }\\
					\graph_6 \in \copiesk{\graph_3 \cup \graph_4 }
				}}}
				\edges{\G}^2
				2^{4}
				p^{\vert \graph_1 \cup \graph_2 \cup \graph_3 \cup \graph_4 \cup \graph_5 \cup \graph_6 \vert}
			\bigg)
			%\cdot
			e^{
				\frac{10Dt}{\sigma}
			}
			%\cdot
			\E[e^{tW}]
			\\
		 &\leq
			\frac{q^2}{\sigma^4}
			\cdot
			\frac{t^2}{\sigma^2}
			\cdot
			2^{5}
			\cdot
			\frac{ (\vertices{\G}!)^{5} \cdot 2^{15\edges{\G}} }{\aut{\G}^6}
			\cdot
			\frac{n^{6\vertices{\G}} \cdot p^{6\edges{\G}}}{\Psimin^{4} \cdot \min\{ \Psimin , 1 \} }
			\cdot
			\edges{\G}^2
			\cdot
			e^{
				\frac{10Dt}{\sigma}
			}
			\cdot
			\E[e^{tW}]
			\\
		&\leq
			c_{1,4}
			\cdot
			\frac{t^2}{q \min\{ \Psimin , 1 \}}
			\cdot
			\frac{1}{\Psimin}
			\cdot
			e^{
				\frac{10Dt}{\sigma}
			}
			\cdot
			\E [ e^{tW} ]
	\end{align*}
	with $\displaystyle c_{1,4} := c_{1,3} =
		\frac{ 2^{8 + 15\edges{\G}} \cdot (\vertices{\G}!)^8 \cdot \edges{\G}^2}{\aut{\G}^3}$.
	
	Having bounded $S_{1,1}$, $S_{1,2}$, $S_{1,3}$, and $S_{1,4}$,
	we note that $c_{1,3} = \max \{ c_{1,1}, c_{1,2}, c_{1,3}, c_{1,4} \}$
	because of $\aut{\G} \leq \vertices{\G}!$.
	In conclusion,
	for all $t \geq 0$ it holds that
	\begin{align*}
		\E\Big[
			\Big\vert
				\sqrt{pq}\sum_{k\in E} U_k^c
			\Big\vert
			\cdot e^{tW}
		\Big]
		&\leq
			\big(
				S_{1,1}(t) + S_{1,2}(t) + S_{1,3}(t) + S_{1,4}(t)
			\big)^{\frac12}
			\cdot \E[e^{tW}]^{\frac12}
			\\
		&\leq
			\bigg(
				c_{1,1}
				\cdot
				\frac{1}{\Psimin}
				\cdot
				e^{
					\frac{4Dt}{\sigma}
				}
				\cdot
				\E [ e^{tW} ]
			\\&\hspace{1em}+
				c_{1,2}
				\cdot
				\frac{t}{\sqrt{q\Psimin}}
				\cdot
				\frac{1}{\Psimin}
				\cdot
				e^{
					\frac{5Dt}{\sigma}
				}
				\cdot
				\E [ e^{tW} ]
			\\&\hspace{1em}+
				c_{1,3}
				\cdot
				\frac{t^2}{q \min\{ \Psimin , 1 \}}
				\cdot
				\frac{1}{\Psimin}
				\cdot
				e^{
					\frac{7Dt}{\sigma}
				}
				\cdot
				\E [ e^{tW} ]
			\\&\hspace{1em}+
				c_{1,4}
				\cdot
				\frac{t^2}{q \min\{ \Psimin , 1 \}}
				\cdot
				\frac{1}{\Psimin}
				\cdot
				e^{
					\frac{10Dt}{\sigma}
				}
				\cdot
				\E [ e^{tW} ]
			\bigg)^{\frac12}
			\cdot \E[e^{tW}]^{\frac12}
			\\
		&\leq
			\sqrt{c_{1,3}} \cdot
			\bigg(
				1
				+
				\frac{t}{\sqrt{\Psimin}}
				+
				2
				\frac{t^2}{\min\{ \Psimin , 1 \}}
			\bigg)^\frac12
			\frac{1}{\sqrt{q\Psimin}}
			\cdot
			e^{
				\frac{5Dt}{\sigma}
			}
			\cdot
			\E [ e^{tW} ]
			\\
		&\leq
			\gamma_1(t)
			\cdot
			\E [ e^{tW} ]
	\end{align*}
	with
	\begin{align}
		\gamma_1(t) :={}&
			c_{A1}
			\cdot
			\bigg(
				1 + \frac{t}{\min\{\sqrt{\Psimin},1\}}
			\bigg)
			\frac{1}{\sqrt{q\Psimin}}
			\cdot
			e^{
				\frac{5Dt}{\sigma}
			}
			\label{sg:fct:g1}
			,
	\end{align}
	where
	$\displaystyle c_{A1}
	:= 
		\sqrt{2c_{1,3}}
	=
		\frac{ 2^{\frac92 + \frac{15}2\edges{\G}} \cdot (\vertices{\G}!)^4 \cdot \edges{\G}}{\aut{\G}^\frac32}$.
	
	\textbf{Construction of $\gamma_2$}:
	To verify that assumption \eqref{sg:A2} can be fulfilled with some function $\gamma_2$
	we again start by applying the Cauchy--Schwarz inequality
	and using the decomposition of $U_k$ from \eqref{sg:eq:UV},
	which results in
	\begin{align*}
		\E\Big[
			\Big\vert
				\sum_{k \in E} Y_k U_k
			\Big\vert
			\cdot e^{tW}
		\Big]
		&\leq
			\E\Big[
				\Big(
					\sum_{k \in E} Y_k U_k
				\Big)^2
				e^{tW}
			\Big]^{\frac12}
			\cdot \E[e^{tW}]^{\frac12}
		\\
		&=
			\Big(
				\sum_{k,\ell \in E} 
					\E\Big[ Y_k Y_\ell U_k U_\ell e^{tW}\Big]
			\Big)^{\frac12}
			\cdot \E[e^{tW}]^{\frac12}
		\\
		&=
			\bigg(
				\frac{1}{\sigma^4} \cdot
				\sum_{k,\ell\in E}
				\sum_{\substack{\graph_1,\graph_2 \in \copiesk{k} \\ \graph_3,\graph_4 \in \copiesk{\ell}}}
					\E\Big[
						B_k^c
						B_\ell^c
						\cdot
						\V{k}{1}{2}
						\V{\ell}{3}{4}
						\cdot
						e^{tW}
					\Big]
			\bigg)^{\frac12}
			\cdot \E[e^{tW}]^{\frac12}
	\end{align*}
	for all $t \geq 0$.
	We split the sum over $k,\ell \in E$ into two parts.
	The first part
	\begin{align*}
		S_{2,1}(t) &:=
			\frac{1}{\sigma^4}
			\cdot
			\sum_{\mathclap{\substack{
				k\in E\\
				\graph_1,\graph_2,\graph_3,\graph_4 \in \copiesk{k}
			}}}
				\E\Big[
					(B_k^c)^2
					\cdot
					\V{k}{1}{2}
					\V{k}{3}{4}
					\cdot
					e^{tW}
				\Big]
	\intertext{
	contains all terms in which there is $k = \ell$.
	In the second part, in which there is $k \neq \ell$,
	we decompose $e^{tW} = 
				e^{
					\frac{t}{\sigma}
					\sum_{\graph \in \copiesk{\{k,\ell\}}}
					B_{\graph}^c
				}
				\cdot
				e^{
					\frac{t}{\sigma}
					\sum_{\graph \in \copies \backslash	\copiesk{\{k,\ell\}}}
					B_{\graph}^c
				}$
	and we use the Taylor expansion on
	$e^{ \frac{t}{\sigma} \sum_{\graph \in \copiesk{\{k,\ell\}}} B_{\graph}^c} $.
	This yields the three terms
	}
		S_{2,2}(t) &:=
			\frac{1}{\sigma^4}
			\cdot
			\;\;
			\sum_{\mathclap{\substack{
				k,\ell\in E, k \neq \ell\\
				\graph_1,\graph_2 \in \copiesk{k}\\
				\graph_3,\graph_4 \in \copiesk{\ell}
			}}}
			\;
			\E\Big[
				B_k^c
				B_\ell^c
				\cdot
				\V{k}{1}{2}
				\V{\ell}{3}{4}
				\cdot
				e^{
					\frac{t}{\sigma}
					\sum_{\graph \in \copies \backslash	\copiesk{\{k,\ell\}}}
					B_{\graph}^c
				}
			\Big]
			,\\[1em]
		S_{2,3}(t) &:=
			\frac{1}{\sigma^4}
			\cdot
			\frac{t}{\sigma}
			\cdot
			\;\;
			\sum_{\mathclap{\substack{
				k,\ell\in E, k \neq \ell\\
				\graph_1,\graph_2 \in \copiesk{k}\\
				\graph_3,\graph_4 \in \copiesk{\ell}\\
				\graph_5 \in \copiesk{\{k,\ell\}}
			}}}
			\;
			\E\Bigg[
				B_k^c
				B_\ell^c
				\cdot
				\V{k}{1}{2}
				\V{\ell}{3}{4}
				\cdot
				B_{\graph_5}^c
				\cdot
				e^{
					\frac{t}{\sigma}
					\sum_{\graph \in \copies \backslash	\copiesk{\{k,\ell\}}}
					B_{\graph}^c
				}
			\Bigg]
			,\\[1em]
		S_{2,4}(t) &:=
			\frac{1}{\sigma^4}
			\cdot
			\frac{t^2}{2\sigma^2}
			\cdot
			\;
			\sum_{\mathclap{\substack{
				k,\ell\in E\, k \neq \ell\\
				\graph_1,\graph_2 \in \copiesk{k}\\
				\graph_3,\graph_4 \in \copiesk{\ell}\\
				\graph_5, \graph_6 \in \copiesk{\{k,\ell\}}
			}}}
			\;
			\E\Bigg[
				B_k^c
				B_\ell^c
				\cdot
				\V{k}{1}{2}
				\V{\ell}{3}{4}
				\cdot
				B_{\graph_5}^c
				B_{\graph_6}^c
				\cdot
				r_2 \Big(
					\frac{t}{\sigma}
					\;\;\;\;
					\sum_{\mathclap{\graph \in \copiesk{\{k,\ell\}}}}
					B_{\graph}^c
					\Big)
				\cdot
				e^{
					\frac{t}{\sigma}
					\sum_{\graph \in \copies \backslash	\copiesk{\{k,\ell\}}}
					B_{\graph}^c
				}
			\Bigg]
		,
	\end{align*}
	so that
	\begin{align*}
		\E\Big[
			\Big\vert
				\sum_{k \in E} Y_k U_k
			\Big\vert
			\cdot e^{tW}
		\Big]
		&\leq
			\big(
				S_{2,1}(t) + S_{2,2}(t) + S_{2,3}(t) + S_{2,4}(t)
			\big)^{\frac12}
			\cdot \E[e^{tW}]^{\frac12}
		.
	\end{align*}
	
	To derive a bound for $S_{2,1}$,
	we first note that $\V{k}{1}{2}$ and $\V{k}{3}{4}$ are non-negative
	according to \eqref{sg:const:V}, so that
	$S_{2,1}(t) \geq 0$.
	An upper bound for $S_{2,1}(t)$ is derived by
	application of the first inequality from Lemma~\ref{sg:lem:momgenfunc}.
	We then use that $\V{k}{1}{2}$ and $\V{k}{3}{4}$ are independent from $B_k$
	according to \eqref{sg:const:V},
	and we finish by applying
	Lemma~\ref{sg:lem:correlation}, Remark~\ref{sg:rem:1}, and Lemma~\ref{sg:lem:connected}.
	This yields
	\begin{align*}
		S_{2,1}(t)
		&\leq
			\frac{1}{\sigma^4}
			\cdot
			\sum_{\mathclap{\substack{
				k\in E\\
				\graph_1,\graph_2,\graph_3,\graph_4 \in \copiesk{k}
			}}}
				\E\Big[
					(B_k^c)^2
					\cdot
					\V{k}{1}{2}
					\V{k}{3}{4}
				\Big]
				\cdot
				e^{
					\frac{t}{\sigma}
					\vert \copiesk{ \{k\} \cup \graph_1 \cup \graph_2 \cup \graph_3 \cup \graph_4 } \vert
				}
				\cdot
				\E [ e^{tW} ]\\
		&\leq
			\frac{1}{\sigma^4}
			\cdot
			\sum_{\mathclap{\substack{
				k\in E\\
				\graph_1,\graph_2,\graph_3,\graph_4 \in \copiesk{k}
			}}}
				\E\big[
					(B_k^c)^2
				\big]
				\cdot				
				\E\big[
					\V{k}{1}{2}
					\V{k}{3}{4}
				\big]
				\cdot
				e^{
					\frac{4Dt}{\sigma}
				}
				\cdot
				\E [ e^{tW} ]
			\\
		&\leq
			\frac{1}{\sigma^4}
			\cdot
			\sum_{\mathclap{\substack{
				k\in E\\
				\graph_1,\graph_2,\graph_3,\graph_4 \in \copiesk{k}
			}}}
				pq
				\cdot	
				\;\;\;
				\sum_{\substack{
						\{k\} \subset \alpha_2 \subset \graph_2 \\
						\{k\} \subset \alpha_4 \subset \graph_4
					}}
					\frac{
						\E[
								B_{\graph_2 \backslash \alpha_2}
						]
					}{
						\vert \graph_2 \vert
						%\cdot
						\binom{\vert\graph_2\vert - 1}{\vert\alpha_2\vert - 1}
					}
					\frac{
						\E[
								B_{\graph_4 \backslash \alpha_4}
						]
					}{
						\vert \graph_4 \vert
						%\cdot
						\binom{\vert\graph_4\vert - 1}{\vert\alpha_4\vert - 1}
					}
				\E\Big[
						B_{(\graph_1 \cup \alpha_2) \backslash \{ k \} }
						B_{(\graph_3 \cup \alpha_4) \backslash \{ k\} }
				\Big]
				%\cdot
				e^{
					\frac{4Dt}{\sigma}
				}
				%\cdot
				\E [ e^{tW} ]
			\\
		&\leq
			\frac{1}{\sigma^4}
			\cdot
			\sum_{\mathclap{\substack{
				\graph_1 \in \copies\\
				\graph_2,\graph_3,\graph_4 \in \copiesk{\graph_1}
			}}}
			\;\;\;\;\;
			\sum_{\mathclap{k\in\copiesk{\graph_1}}}
				pq
				\cdot	
				p^{\vert \graph_1 \cup \graph_2 \cup \graph_3 \cup \graph_4 \vert - 1}
				\cdot
				e^{
					\frac{4Dt}{\sigma}
				}
				\cdot
				\E [ e^{tW} ]
			\\
		&\leq
			\frac{q}{\sigma^4}
			\cdot
			\edges{\G}
			\cdot	
			\frac{ (\vertices{\G}!)^{3} \cdot 2^{6\edges{\G}} }{\aut{\G}^4}
			\cdot
			\frac{n^{4\vertices{\G}} \cdot p^{4\edges{\G}}}{\Psimin^{3}}
			\cdot
			e^{
				\frac{4Dt}{\sigma}
			}
			\cdot
			\E [ e^{tW} ]
			\\
		&\leq
			c_{2,1}
			\cdot
			\frac{1}{q\Psimin}
			\cdot
			e^{
				\frac{4Dt}{\sigma}
			}
			\cdot
			\E [ e^{tW} ]
	\end{align*}
	with $\displaystyle c_{2,1} :=
		\frac{ 2^{2 + 6\edges{\G}} \cdot (\vertices{\G}!)^5 \cdot \edges{\G}}{\aut{\G}^2}$.
	For the last step we again used the lower bound for $\sigma$ presented in \eqref{sg:ineq:sigma}.
	
	Regarding $S_{2,2}$, we first focus on the random variables inside the expectation.
	Let $k,\ell\in E$ with $k \neq \ell$,
	$\graph_1,\graph_2 \in \copiesk{k}$ and $\graph_3,\graph_4 \in \copiesk{\ell}$.
	$\V{k}{1}{2}$ and
	$e^{	\frac{t}{\sigma}
		\sum_{\graph \in \copies \backslash	\copiesk{\{k,\ell\}}}
		B_{\graph}^c
	}$
	are independent of $B_k^c$.
	If $k \not\in \graph_3 \cup \graph_4$, then $\V{\ell}{3}{4}$ is independent of $B_k^c$, too.
	In this case, we could split the expectation and obtain $\E[B_k^c]=0$ as a factor.
	In the opposite case, in which $k \in \graph_3 \cup \graph_4$,
	we only have to regard those summands of $\V{\ell}{3}{4}$ that contain $B_k$ as a factor.
	However, then there is $B_k^c \cdot B_k = q \cdot B_k$.
	Hence, $S_{2,2}(t) \geq 0$.
	Further, application of our main technical
	Lemmas~\ref{sg:lem:momgenfunc}, \ref{sg:lem:correlation}, \ref{sg:lem:connected},
	as well as inequality \eqref{sg:ineq:sigma}
	leads to
	\begin{align*}
		S_{2,2}(t)
		&\leq
			\frac{1}{\sigma^4}
			\cdot
			\sum_{\mathclap{\substack{
				k,\ell\in E, k \neq \ell\\
				\graph_1,\graph_2 \in \copiesk{k}\\
				\graph_3,\graph_4 \in \copiesk{\ell}
			}}}
			\;
			\E\Big[
				q^2
				\cdot
				B_k
				B_\ell
				\cdot
				\V{k}{1}{2}
				\V{\ell}{3}{4}
				\cdot
				e^{
					\frac{t}{\sigma}
					\sum_{\graph \in \copies \backslash	\copiesk{\{k,\ell\}}}
					B_{\graph}^c
				}
			\Big]
			\cdot
			\1{k \in \graph_3 \cup \graph_4}
			\1{\ell \in \graph_1 \cup \graph_2}
			\\
		&\leq
			\frac{q^2}{\sigma^4}
			\cdot
			\sum_{\mathclap{\substack{
				k,\ell\in E, k \neq \ell\\
				\graph_1,\graph_2 \in \copiesk{k}\\
				\graph_3,\graph_4 \in \copiesk{\ell}
			}}}
			\;
			\E\Big[
				B_k
				B_\ell
				\cdot
				\V{k}{1}{2}
				\V{\ell}{3}{4}
			\Big]
			\cdot
			e^{
				\frac{t}{\sigma}
				\vert \copiesk{\{k,\ell\} \cup \graph_1 \cup \graph_2 \cup \graph_3 \cup \graph_4 } \vert
			}
			\cdot
			\E [ e^{tW} ]
			\cdot
			\1{k \in \graph_3 \cup \graph_4}
			\1{\ell \in \graph_1 \cup \graph_2}
			\\
		&\leq
			\frac{q^2}{\sigma^4}
			\cdot
			2^2 \sum_{\mathclap{\substack{
				k,\ell\in E, k \neq \ell\\
				\graph_1,\graph_3 \in \copiesk{k} \cap \copiesk{\ell}\\
				\graph_2 \in \copiesk{k}\\
				\graph_4 \in \copiesk{\ell}
			}}}
			p^{\vert \graph_1 \cup \graph_2 \cup \graph_3 \cup \graph_4 \vert}
			\cdot
			e^{
				\frac{4Dt}{\sigma}
			}
			\cdot
			\E [ e^{tW} ]
			\\
		&\leq
			\frac{q^2}{\sigma^4}
			\cdot
			4\sum_{\mathclap{\substack{
				\graph_1 \in \copies\\
				\graph_2,\graph_3,\graph_4 \in \copiesk{\graph_1}
			}}}
			\;\;\;\;\;\;\;\;\;\;\;\;\;
			\sum_{\mathclap{\substack{
				k \in \graph_1 \cap \graph_2 \cap \graph_3\\
				\ell \in \graph_1 \cap \graph_3 \cap \graph_4
			}}}
			p^{\vert \graph_1 \cup \graph_2 \cup \graph_3 \cup \graph_4 \vert}
			\cdot
			e^{
				\frac{4Dt}{\sigma}
			}
			\cdot
			\E [ e^{tW} ]
			\\
		&\leq
			\frac{q^2}{\sigma^4}
			\cdot
			4
			\cdot
			\edges{\G}^2
			\cdot	
			\frac{ (\vertices{\G}!)^{3} \cdot 2^{6\edges{\G}} }{\aut{\G}^4}
			\cdot
			\frac{n^{4\vertices{\G}} \cdot p^{4\edges{\G}}}{\Psimin^{3}}
			\cdot
			e^{
				\frac{4Dt}{\sigma}
			}
			\cdot
			\E [ e^{tW} ]
			\\
		&\leq
			c_{2,2}
			\cdot
			\frac{1}{\Psimin}
			\cdot
			e^{
				\frac{4Dt}{\sigma}
			}
			\cdot
			\E [ e^{tW} ]
	\end{align*}
	with $\displaystyle c_{2,2} :=
		\frac{ 2^{4 + 6\edges{\G}} \cdot (\vertices{\G}!)^5 \cdot \edges{\G}^2}{\aut{\G}^2}$.
	
	Similar calculations yield
	\begin{align*}
		\vert S_{2,3}(t) \vert
		&\leq
			\frac{t}{\sigma^5}
			\cdot
			2\sum_{\mathclap{\substack{
				k,\ell\in E, k \neq \ell\\
				\graph_1,\graph_2 \in \copiesk{k}\\
				\graph_3,\graph_4 \in \copiesk{\ell}\\
				\graph_5 \in \copiesk{\ell}
			}}}
			\;\;\;
			\Bigg(\Bigg\vert\E\Bigg[
				B_k^c
				B_\ell^c
				\cdot
				\V{k}{1}{2}
				\V{\ell}{3}{4}
				\cdot
				B_{\graph_5}
				\cdot
				e^{
					\frac{t}{\sigma}
					\sum_{\graph \in \copies \backslash	\copiesk{\{k,\ell\}}}
					B_{\graph}^c
				}
			\Bigg]\Bigg\vert
			\\[-1.5\baselineskip] &\hspace{2.5cm}
			+
			\Bigg\vert\E\Bigg[
				B_k^c
				B_\ell^c
				\cdot
				\V{k}{1}{2}
				\V{\ell}{3}{4}
				\cdot
				\E[B_{\graph_5}]
				\cdot
				e^{
					\frac{t}{\sigma}
					\sum_{\graph \in \copies \backslash	\copiesk{\{k,\ell\}}}
					B_{\graph}^c
				}
			\Bigg]\Bigg\vert\Bigg)
			\\
		&\leq
			\frac{t}{\sigma^5}
			\cdot
			2\sum_{\mathclap{\substack{
				k,\ell\in E, k \neq \ell\\
				\graph_1,\graph_2 \in \copiesk{k}\\
				\graph_3,\graph_4,\graph_5 \in \copiesk{\ell}
			}}}
			\;\;\;
			\Bigg(
			\Bigg\vert\E\Bigg[
				q^2
				\cdot
				B_k B_\ell
				\cdot
				\V{k}{1}{2}
				\V{\ell}{3}{4}
				\cdot
				B_{\graph_5}
				\cdot
				e^{
					\frac{t}{\sigma}
					\sum_{\graph \in \copies \backslash	\copiesk{\{k,\ell\}}}
					B_{\graph}^c
				}
			\Bigg]\Bigg\vert
			\cdot \1{k \in \graph_3 \cup \graph_4 \cup \graph_5}
			\\[-\baselineskip] &\hspace{2.5cm}
			+
			\Bigg\vert\E\Bigg[
				q^2
				\cdot
				B_k B_\ell
				\cdot
				\V{k}{1}{2}
				\V{\ell}{3}{4}
				\cdot
				\E[B_{\graph_5}]
				\cdot
				e^{
					\frac{t}{\sigma}
					\sum_{\graph \in \copies \backslash	\copiesk{\{k,\ell\}}}
					B_{\graph}^c
				}
			\Bigg]\Bigg\vert
			\cdot \1{k \in \graph_3 \cup \graph_4}
			\Bigg)
			\\
		&\leq
			\frac{q^2 t}{\sigma^5}
			\cdot
			2\sum_{\mathclap{\substack{
				k,\ell\in E, k \neq \ell\\
				\graph_1,\graph_2 \in \copiesk{k}\\
				\graph_3,\graph_4,\graph_5 \in \copiesk{\ell}
			}}}
			p^{\vert \graph_1 \cup \graph_2 \cup \graph_3 \cup \graph_4 \cup \graph_5 \vert}
			\cdot
			\Big(
			e^{
				\frac{5Dt}{\sigma}
			}
			\cdot \1{k \in \graph_3 \cup \graph_4 \cup \graph_5}
			+
			e^{
				\frac{4Dt}{\sigma}
			}
			\cdot \1{k \in \graph_3 \cup \graph_4}
			\Big)
			\cdot
			\E [ e^{tW} ]
			\\
		&\leq
			\frac{q^2 t}{\sigma^5}
			\cdot
			2\cdot
			5\sum_{\mathclap{\substack{
				\graph_1 \in \copies\\
				\graph_2,\graph_3 \in \copiesk{\graph_1} \\
				\graph_4,\graph_5 \in \copiesk{\graph_3}
			}}}
			\;\;\;\;\;\;\;\;\;\;\;
			\sum_{\mathclap{\substack{
				k \in \graph_1 \cap \graph_2 \cap \graph_3\\
				\ell \in \graph_3 \cap \graph_4 \cap \graph_5
			}}}
			p^{\vert \graph_1 \cup \graph_2 \cup \graph_3 \cup \graph_4 \cup \graph_5 \vert}
			\cdot
			e^{
				\frac{5Dt}{\sigma}
			}
			\cdot
			\E [ e^{tW} ]
			\\
		&\leq
			\frac{10q^2 t}{\sigma^5}
			\cdot
			\edges{\G}^2
			\cdot	
			\frac{ (\vertices{\G}!)^{4} \cdot 2^{10\edges{\G}} }{\aut{\G}^5}
			\cdot
			\frac{n^{5\vertices{\G}} \cdot p^{5\edges{\G}}}{\Psimin^{4}}
			\cdot
			e^{
				\frac{5Dt}{\sigma}
			}
			\cdot
			\E [ e^{tW} ]
			\\
		&\leq
			c_{2,3}
			\cdot
			\frac{t}{\sqrt{q\Psimin}}
			\cdot
			\frac{1}{\Psimin}
			\cdot
			e^{
				\frac{5Dt}{\sigma}
			}
			\cdot
			\E [ e^{tW} ]
	\end{align*}
	with $\displaystyle c_{2,3} :=
		\frac{ 5 \cdot 2^{\frac72 + 10\edges{\G}} \cdot (\vertices{\G}!)^{\frac{13}2} \cdot \edges{\G}^2}{\aut{\G}^\frac52}$,
		and
	\begin{align*}
		\vert S_{2,4}(t) \vert
		&\leq
			\frac{t^2}{2\sigma^6}
			\cdot
			\sum_{\mathclap{\substack{
				k,\ell\in E\, k \neq \ell\\
				\graph_1,\graph_2 \in \copiesk{k}\\
				\graph_3,\graph_4 \in \copiesk{\ell}\\
				\graph_5,\graph_6 \in \copiesk{\{k,\ell\}}
			}}}
			\;
			\E\Bigg[
				\big\vert
				B_k^c
				B_\ell^c
				\cdot
				\V{k}{1}{2}
				\V{\ell}{3}{4}
				\cdot
				B_{\graph_5}^c
				B_{\graph_6}^c
				\big\vert
				\cdot
				e^{
					\frac{2Dt}{\sigma}
				}
				\cdot
				e^{
					\frac{t}{\sigma}
					\sum_{\graph \in \copies \backslash	\copiesk{\{k,\ell\}}}
					B_{\graph}^c
				}
			\Bigg]\\
		&\leq
			\frac{t^2}{2\sigma^6}
			\cdot
			\sum_{\mathclap{\substack{
				k,\ell\in E\, k \neq \ell\\
				\graph_1,\graph_2 \in \copiesk{k}\\
				\graph_3,\graph_4 \in \copiesk{\ell}\\
				\graph_5,\graph_6 \in \copiesk{\{k,\ell\}}
			}}}
			\;
			\E\Bigg[
				\big\vert
				B_k^c
				B_\ell^c
				\big\vert
				\cdot
				\V{k}{1}{2}
				\V{\ell}{3}{4}
				\cdot
				2B_{\graph_5}
				\cdot
				2B_{\graph_6}
			\Bigg]
			\cdot
			e^{
				\frac{2Dt}{\sigma}
			}
			\cdot
			e^{
				\frac{6Dt}{\sigma}
			}
			\cdot
			\E [ e^{tW} ]
			\\
		&\leq
			\frac{2t^2}{\sigma^6}
			\cdot
			\sum_{\mathclap{\substack{
				k,\ell\in E\, k \neq \ell\\
				\graph_1,\graph_2 \in \copiesk{k}\\
				\graph_3,\graph_4 \in \copiesk{\ell}\\
				\graph_5,\graph_6 \in \copiesk{\{k,\ell\}}
			}}}
			\;
			\E\big[
				\big\vert
				B_k^c
				\big\vert
			\big]
			\E\big[
				\big\vert
				B_\ell^c
				\big\vert
			\big]
			\E\Big[
				\V{k}{1}{2}
				\V{\ell}{3}{4}
				\cdot
				B_{\graph_5}
				B_{\graph_6}
			\Big\vert
				B_k=B_\ell=1
			\Big]
			\cdot
			e^{
				\frac{8Dt}{\sigma}
			}
			\cdot
			\E [ e^{tW} ]
			\\
		&\leq
			\frac{2t^2}{\sigma^6}
			\cdot
			\sum_{\mathclap{\substack{
				k,\ell\in E\, k \neq \ell\\
				\graph_1,\graph_2 \in \copiesk{k}\\
				\graph_3,\graph_4 \in \copiesk{\ell}\\
				\graph_5,\graph_6 \in \copiesk{\{k,\ell\}}
			}}}
			\;
			2pq
			\cdot
			2pq
			\cdot
			p^{\vert \graph_1 \cup \graph_2 \cup \graph_3 \cup \graph_4 \cup \graph_5 \cup \graph_6 \vert -2}
			\cdot
			e^{
				\frac{8Dt}{\sigma}
			}
			\cdot
			\E [ e^{tW} ]
			\\
		&\leq
			\frac{8q^2t^2}{\sigma^6}
			\cdot
			\Bigg(
			\;\;
			2\sum_{\mathclap{\substack{
				\graph_1,\graph_3 \in \copies\\
				\graph_2,\graph_5 \in \copiesk{\graph_1} \\
				\graph_4,\graph_6 \in \copiesk{\graph_3}
			}}}
			\edges{\G}^2
			p^{\vert \graph_1 \cup \graph_2 \cup \graph_3 \cup \graph_4 \cup \graph_5 \cup \graph_6 \vert}
			+
			2\sum_{\mathclap{\substack{
				\graph_1,\graph_3 \in \copies\\
				\graph_2,\graph_5,\graph_6 \in \copiesk{\graph_1} \\
				\graph_4 \in \copiesk{\graph_3}
			}}}
			\edges{\G}^2
			p^{\vert \graph_1 \cup \graph_2 \cup \graph_3 \cup \graph_4 \cup \graph_5 \cup \graph_6 \vert}
			\Bigg)
			\cdot
			e^{
				\frac{8Dt}{\sigma}
			}
			\cdot
			\E [ e^{tW} ]
			\\
		&\leq
			\frac{32q^2t^2}{\sigma^6}
			\cdot
			\edges{\G}^2
			\cdot
			\frac{ (\vertices{\G}!)^{5} \cdot 2^{15\edges{\G}} }{\aut{\G}^6}
			\cdot
			\frac{n^{6\vertices{\G}} \cdot p^{6\edges{\G}}}{\Psimin^{4} \cdot \min\{ \Psimin , 1 \} }
			\cdot
			e^{
				\frac{8Dt}{\sigma}
			}
			\cdot
			\E [ e^{tW} ]
			\\
		&\leq
			c_{2,4}
			\cdot
			\frac{t^2}{q \min\{ \Psimin , 1 \}}
			\cdot
			\frac{1}{\Psimin}
			\cdot
			e^{
				\frac{8Dt}{\sigma}
			}
			\cdot
			\E [ e^{tW} ]
	\end{align*}
	with $\displaystyle c_{2,4} :=
		\frac{ 2^{8 + 15\edges{\G}} \cdot (\vertices{\G}!)^8 \cdot \edges{\G}^2}{\aut{\G}^3}$.
		
	Note that $c_{2,4} = \max \{ c_{2,1}, c_{2,2}, c_{2,3}, c_{2,4} \}$.
	Bringing together the bounds for $S_{2,1}$, $S_{2,2}$, $S_{2,3}$, and $S_{2,4}$
	results in
	\begin{align*}
		\E\Big[
			\Big\vert
				\sum_{k \in E} Y_k U_k
			\Big\vert
			\cdot e^{tW}
		\Big]
		&\leq
			\big(
				S_{2,1}(t) + S_{2,2}(t) + S_{2,3}(t) + S_{2,4}(t)
			\big)^{\frac12}
			\cdot \E[e^{tW}]^{\frac12}
			\\
		&\leq
			\bigg(
				c_{2,1}
				\cdot
				\frac{1}{q\Psimin}
				\cdot
				e^{
					\frac{4Dt}{\sigma}
				}
				\cdot
				\E [ e^{tW} ]
			\\&\hspace{1em}+
				c_{2,2}
				\cdot
				\frac{1}{\Psimin}
				\cdot
				e^{
					\frac{4Dt}{\sigma}
				}
				\cdot
				\E [ e^{tW} ]
			\\&\hspace{1em}+
				c_{2,3}
				\cdot
				\frac{t}{\sqrt{q\Psimin}}
				\cdot
				\frac{1}{\Psimin}
				\cdot
				e^{
					\frac{5Dt}{\sigma}
				}
				\cdot
				\E [ e^{tW} ]
			\\&\hspace{1em}+
				c_{2,4}
				\cdot
				\frac{t^2}{q \min\{ \Psimin , 1 \}}
				\cdot
				\frac{1}{\Psimin}
				\cdot
				e^{
					\frac{8Dt}{\sigma}
				}
				\cdot
				\E [ e^{tW} ]
			\bigg)^{\frac12}
			\cdot \E[e^{tW}]^{\frac12}
			\\
		&\leq
			\sqrt{c_{2,4}}
			\cdot
			\bigg(
				2 + \frac{t}{\sqrt{\Psimin}} + \frac{t^2}{\min\{ \Psimin , 1 \}}
			\bigg)^\frac12
			\frac{1}{\sqrt{q\Psimin}}
			\cdot
			e^{
				\frac{4Dt}{\sigma}
			}
			\cdot
			\E [ e^{tW} ]
			\\
		&\leq
			\gamma_2(t)
			\cdot
			\E [ e^{tW} ]
	\end{align*}
	with
	\begin{align}
		\gamma_2(t) :={}&
			c_{A2}
			\cdot
			\bigg(
				1 + \frac{t}{\min\{\sqrt{\Psimin},1\}}
			\bigg)
			\frac{1}{\sqrt{q\Psimin}}
			\cdot
			e^{
				\frac{4Dt}{\sigma}
			}
			,
			\label{sg:fct:g2}
	\end{align}
	where
	$\displaystyle c_{A2}
	:= 
		\sqrt{2c_{2,4}}
	=
		\frac{ 2^{\frac92 + \frac{15}2\edges{\G}} \cdot (\vertices{\G}!)^4 \cdot \edges{\G}}{\aut{\G}^\frac32}
	= c_{A1}$
	for all $t \geq 0$.
	
	\textbf{Conclusion}:
	We have constructed $\gamma_1(t)$ and $\gamma_2(t)$ for $t \geq 0$, see \eqref{sg:fct:g1} and \eqref{sg:fct:g2},
	so that the assumptions \eqref{sg:A1} and \eqref{sg:A2}
	that we formulated at the beginning of this proof of Theorem ~\ref{sg:theo:main} on p.~\pageref{sg:A1}
	are fulfilled.
	Finally, note that
	\begin{align*}
		\gamma_1(t) + \gamma_2(t)
		\leq 2c_\G \cdot s(t)
	\end{align*}
	with $s(t)$ and $c_\G$ as defined in the statement of Theorem~\ref{sg:theo:main}.
	Theorem~\ref{Theorem:MD_Rademacher-short} yields the desired result.
\end{proof}

\begin{proof}[Proof of Corollary~\ref{sg:cor:main}]
	If we restrict $t$ to be smaller than 
	$c_1 \cdot \frac{n^2p^{\edges{\G}}\sqrt{q}}{\sqrt{\Psimin}}$,
	where $c_1 > 0$ is an arbitrary positive number,
	we can further simplify our results.
	Under this restriction,
	due to $n^2p^{\edges{\G}} \leq \Psimin$, there is
	$t \leq c_1 \cdot \frac{n^2p^{\edges{\G}}\sqrt{q}}{\sqrt{\Psimin}} \leq c_1 \cdot \sqrt{q\Psimin}$ and hence
	\begin{align*}
		\frac{t}{\min\{\sqrt{\Psimin},1\}}
		&\leq \max \Big\{
				c_1 \cdot \sqrt{q}
				\, , \,
				t
			 \Big\}
		\leq c_1 \cdot \sqrt{q} + t
		.
	\end{align*}
	
	On the other hand,
	since we are working in the case of $n \geq 4\vertices{\G}^2$,
	we can use explicit bounds for $D$ and $\sigma$
	from Lemma~4.2 in \cite{ER22}
	to show that
	$\frac{D}{\sigma} \leq \hat c_\G \cdot \frac{\sqrt{\Psimin}}{n^2p^{\edges{\G}}\sqrt{q}}$,
	where
	$\hat c_\G = \sqrt{2} \cdot \frac{ \sqrt{\vertices{\G}!} \cdot \vertices{\G}^2 \cdot \edges{\G} }{ \sqrt{\aut{\G}} }$
	is a constant that only depends on $\G$.
	This implies that
	$e^{\frac{5Dt}{\sigma}} \leq e^{5c_1 \hat c_\G}$
	for $0 \leq t \leq c_1 \cdot \frac{n^2p^{\edges{\G}}\sqrt{q}}{\sqrt{\Psimin}}$.
	
	Additionally, in the case we focus on, there is
	$e^{c_\G \cdot t^2 \cdot s(t)} \leq e^{c_\G c_2}$.
	
	Application of these bounds to the result of Theorem~\ref{sg:theo:main} leads to
	\begin{align*}
		\bigg\vert
			\frac{\Pp(W>t)}{1-\Phi(t)} -1
		\bigg\vert
		&\leq
			50 c_\G
			\cdot e^{c_\G c_2}
			\cdot (1+t^2)
			\cdot (
				1
				+
				c_1 \sqrt{q}
				+
				t
			)
			\cdot \frac{1}{\sqrt{q\Psimin}}
			\cdot e^{5c_1 \hat c_\G}\\
		&\leq
			50 c_\G
			\cdot e^{5c_1 \hat c_\G + c_2 c_\G}
			\cdot (1+c_1 \sqrt{q})
			\cdot (1+t^2)
			\cdot (1+t)
			\cdot \frac{1}{\sqrt{q\Psimin}}
		,
	\end{align*}
	To finish the proof, note that $(1+t^2)(1+t) \leq 2(1+t^3)$.
\end{proof}

\section{Acknowledgement}
\noindent This research has been supported in part by the German Research Foundation (DFG) under project number 459731056.

\end{document}